 \received{\dots}{\dots}
\tikzset{%
 unshaded/.style={draw, shape=circle, fill=white, inner sep=1.5pt},
 shaded/.style={draw, shape=circle, fill=black, inner sep=1.5pt},
 invisible/.style={shape=circle, inner sep=1.5pt},
 label/.style={shape=rectangle, inner xsep=5pt, inner ysep=7pt},
 auto,
 curvy/.style={->, shorten >=3pt, shorten <=3pt, >=latex, looseness=1, bend angle=30},
 straight/.style={->, shorten >=3pt,shorten <=3pt, >=latex},
 loopy/.style={->, shorten >=3pt, shorten <=3pt, >=latex, min distance=15pt},
 order/.style={thin},
 bigloop/.style={-, min distance=65pt}}
\numberwithin{equation}{section}
\theoremstyle{plain}
\newtheorem{theorem}{Theorem}[section]
\newtheorem{lemma}[theorem]{Lemma}
\newtheorem{lemma:trans}[theorem]{Transfer Lemma}
\newtheorem{theorem:Ockham}[theorem]{Main Theorem}
\theoremstyle{definition}
\newtheorem{example}[theorem]{Example}
\newtheorem{note}[theorem]{Note}
\newtheorem{definition}[theorem]{Definition}
\newtheorem{remark}[theorem]{Remark}
\newcommand{\defn}[1]{\emph{#1}}
\newcommand{\Claim}[2]{\medskip\par\noindent\textit{#1:} {\emph{#2}}\par\smallskip\noindent}
\newcommand{\Case}[2]{\medskip\par\noindent\textit{#1:} \emph{#2}\par\smallskip\noindent}
\newcommand{\A}{{\mathbf A}}
\newcommand{\B}{{\mathbf B}}
\newcommand{\C}{{\mathbf C}}
\newcommand{\rb}{{\mathbf r}}
\newcommand{\KB}{{\mathbf K}}
\newcommand{\SB}{{\mathbf S}}
\newcommand{\TwB}{{\mathbf 2}}
\newcommand{\AT}{{\mathbb A}}
\newcommand{\BT}{{\mathbb B}}
\newcommand{\CT}{{\mathbb C}}
\newcommand{\DT}{{\mathbb D}}
\newcommand{\KT}{{\mathbb K}}
\newcommand{\ST}{{\mathbb S}}
\newcommand{\TwT}{{\mathbbm 2}}
\renewcommand{\P}{{\mathbb P}}
\newcommand{\X}{{\mathbb X}}
\newcommand{\Y}{{\mathbb Y}}
\newcommand{\Z}{{\mathbb Z}}
\newcommand{\T}{{\mathscr T}}
\newcommand{\cat}[1]{\boldsymbol{\mathscr{#1}}}
\newcommand{\CO}{{\cat O}}
\newcommand{\CY}{{\cat Y}}
\newcommand{\CD}{{\cat D}}
\newcommand{\CP}{{\cat P}}
\newcommand{\graph}[1]{\operatorname{graph}(#1)}
\newcommand{\id}[1]{\operatorname{id}_{#1}}
\newcommand{\Max}[1]{{#1_{\mathrm{max}}}}
\newcommand{\Syn}[1]{\operatorname{Syn}(#1)}
\newcommand{\End}[1]{\operatorname{End}(#1)}
\newcommand{\Var}[1]{\operatorname{Var}(#1)}
\newcommand{\HS}[1]{\mathsf{HS}(#1)}
\newcommand{\SH}[1]{\mathsf{SH}(#1)}
\newcommand{\IScP}[1]{\mathsf{ISP^{\scriptscriptstyle +}_f}(#1)}
\newcommand{\ISP}[1]{\mathsf{ISP}(#1)}
\newcommand{\IScPinf}[1]{\mathsf{IS_cP^{\scriptscriptstyle +}}(#1)}
\newcommand{\homsetrel}[2]{\mathrm{E}(#1)\rest{#2}}
\newcommand{\dotcup}{\mathbin{\dot{\cup}}}
\newcommand{\conj}{\And}
\newcommand{\bigconj}{\mathop{\text{\Large\&}}}
\newcommand{\abs}[1]{\lvert#1\rvert}
\newcommand{\comp}{{\setminus}}
\newcommand{\rest}[1]{{\upharpoonright}_{#1}}
\renewcommand{\le}{\leqslant}
\renewcommand{\ge}{\geqslant}
\newcommand{\nle}{\nleqslant}
\renewcommand{\emptyset}{\varnothing}
\renewcommand{\phi}{\varphi}
\renewcommand{\epsilon}{\varepsilon}
\newcommand{\pmods}[1]{\mkern8mu({\operatorname{mod}}\mkern 6mu#1)}
\begin{document}
\title[Counting relations on Ockham algebras]{Counting relations on Ockham algebras} 

\author[B. A. Davey]{Brian A. Davey} 
 \email{B.Davey@latrobe.edu.au}
 \address{Department of Mathematics and Statistics\\La Trobe University\\Victoria 3086\\
 Australia}

\author[L. T. Nguyen]{Long T. Nguyen} 
 \email{long\_nt@qtttc.edu.vn}
 \address{Department of Mathematics and Statistics\\La Trobe University\\Victoria 3086\\
 Australia}
 \curraddr{Quang Tri Teacher Training College\\Dong Ha City\\Quang Tri Province\\Vietnam}

\author[J. G. Pitkethly]{Jane G. Pitkethly} 
\email{J.Pitkethly@latrobe.edu.au}
 \address{Department of Mathematics and Statistics\\La Trobe University\\Victoria 3086\\
 Australia}

\subjclass[2010]{%
  Primary: 06D30;    
  Secondary: 06D50,  
             08C20.} 

\keywords{Ockham algebras, Stone algebras, quasi-primal algebras, restricted Priestley duality, natural duality, piggyback duality}

\dedicatory{Dedicated with best wishes from the second and third authors to\\ the first author, Brian Davey, on the occasion of his 65th birthday.}


\begin{abstract}
We find all finite Ockham algebras that admit only finitely many compatible relations (modulo a natural equivalence). Up to isomorphism and symmetry, these Ockham algebras form two countably infinite families: one family consists of the quasi-primal Ockham algebras, and the other family is a sequence of generalised Stone algebras.
\end{abstract}

\maketitle

\section{Introduction}

In this paper, we characterise the finite Ockham algebras that satisfy a very strong finiteness condition on their compatible relations. This condition is a natural strengthening of several well-known finiteness conditions.

An important example of a finiteness condition on a finite algebra $\A$ is that it is \defn{finitely related}:
\begin{itemize}
\item
there is a finite set $R$ of compatible relations on $\A$ such that each compatible relation on $\A$ can be defined from $R$ by a primitive-positive formula.
\end{itemize}
In particular, every finite algebra with a near-unanimity term is finitely related. This follows by Baker and Pixley's result~\cite{BP75} that a finite algebra $\A$ has a near-unanimity term if and only if the following condition holds:
\begin{itemize}
\item
there is a finite set $R$ of compatible relations on $\A$ such that each compatible relation on $\A$ can be defined from $R$ by a conjunction of atomic formulas.
\end{itemize}
The condition that we study in this paper is stronger again:
\begin{itemize}
\item
there is a finite set $R$ of compatible relations on $\A$ such that each compatible relation on $\A$ is interdefinable with a relation $r$ from $R$ via conjunctions of atomic formulas.
\end{itemize}
Such an algebra is said to \defn{admit only finitely many relations}. This condition was introduced by Davey and Pitkethly~\cite{DP10}, motivated by the study of alter egos in natural duality theory.

Since an algebra that admits only finitely many relations must have a near-unanimity term, it is natural to investigate the condition within varieties of lattice-based algebras. In this direction, the following results are known:
\begin{itemize}
\item
The finite Boolean algebras that admit only finitely many relations are those of size at most~$2$~(\cite{DP10}).
\item
The finite lattices that admit only finitely many relations are those of size at most~$2$~(\cite{DP10}).
\item
The finite Heyting algebras that admit only finitely many relations are the chains~(\cite{DP10,NP12}).
\end{itemize}
Our purpose in this paper is to add to this list by characterising the finite Ockham algebras that admit only finitely many relations.

Ockham algebras were introduced in 1977 by Berman~\cite{Ber77}. They have been studied by Urquhart~\cite{Urq79,Urq81}, Goldberg~\cite{Gol81,Gol83}, Blyth and Varlet~\cite{BV94}, and many others. An \defn{Ockham algebra} $\A=\langle A;\vee,\wedge,f,0,1 \rangle$ is bounded distributive lattice enriched with a unary operation $f$ that satisfies the equations $f(0) \approx 1$, $f(1) \approx 0$ and the familiar \defn{De Morgan Laws}:
\[
f(x \vee y) \approx f(x) \wedge f(y) \quad\text{and}\quad f(x \wedge y) \approx f(x) \vee f(y).
\]
The variety of Ockham algebras contains the varieties of Boolean algebras, Kleene algebras, De Morgan algebras, Stone algebras and MS-algebras.

Our characterisation is stated in terms of Ockham spaces. Priestley's duality for bounded distributive lattices~\cite{Pri70,Pri72} has a natural restriction to the variety of Ockham algebras~\cite{Urq79}: the dual space of a finite Ockham algebra $\A$ is a finite ordered set equipped with an order-reversing self-map~$g$. We will describe this duality in more detail in Section~\ref{sec:duality}.

For structures $\X$ and $\Y$ of the same type, we say that $\X$ is a \defn{divisor} of~$\Y$ if $\X \in \HS\Y$, that is, if $\X$ is a homomorphic image of a substructure of~$\Y$.

\begin{theorem:Ockham}\label{thm:main}
Let $\A$ be a non-trivial finite Ockham algebra. Then the following are equivalent:
\begin{enumerate}[ \normalfont(1)]
\item
$\A$ admits only finitely many relations;
\item
there is an odd number $m$ such that the dual space of~$\A$ is isomorphic to $\CT_m$, $\DT_m$ or $\DT_m^\partial$ from Figure~\ref{fig:finiteOckham};
\item
none of the eight Ockham spaces from Figure~\ref{fig:infiniteOckham} is a divisor of the dual space of~$\A$.
\end{enumerate}
\end{theorem:Ockham}

\begin{figure}[h!t]
\begin{tikzpicture}
   \begin{scope}
     \node[anchor=west] at (6.5,0) {$\CT_m$ ($m$ odd)};
     \node[unshaded] (0) at (0,0) {};
         \node[label, anchor=south] at (0) {$0$};
     \node[unshaded] (1) at (1,0) {};
         \node[label, anchor=south] at (1) {$1$};
     \node[unshaded] (2) at (2,0) {};
         \node[label, anchor=south] at (2) {$2$};
     \node[invisible] (3) at (3,0) {};
     \node at (3.5,0) {$\cdots$};
     \node[invisible] (4) at (4,0) {};
     \node[unshaded] (5) at (5,0) {};
         \node[label, anchor=south] at (5) {$m{-}1$};
     \draw[curvy] (0) to [bend left] (1);
     \draw[curvy] (1) to [bend left] (2);
     \draw[curvy] (2) to [bend left] (3);
     \draw[curvy] (4) to [bend left] (5);
     \draw[curvy, bend angle=20] (5) to [bend left] (0);
   \end{scope}
    \begin{scope}[yshift=-3.125cm]
     \node[anchor=west] at (6.5,0.75) {$\DT_m$ ($m$ odd)};
     \node[unshaded] (0) at (0,0) {};
         \node[label, anchor=east] at (0) {$0$};
     \node[unshaded] (m) at (0,1) {};
         \node[label, anchor=east] at (m) {$m$};
     \node[unshaded] (1) at (1,1) {};
         \node[label, anchor=north] at ($(1) + (0.1,0)$) {$1$};
     \node[unshaded] (2) at (2,1) {};
         \node[label, anchor=north] at (2) {$2$};
     \node[invisible] (3) at (3,1) {};
     \node at (3.5,1) {$\cdots$};
     \node[invisible] (4) at (4,1) {};
     \node[unshaded] (5) at (5,1) {};
         \node[label, anchor=north] at (5) {$m{-}1$};
     \draw[order] (0) to (m);
     \draw[curvy] (m) to [bend right] (1);
     \draw[curvy] (0) to [bend right] (1);
     \draw[curvy] (1) to [bend right] (2);
     \draw[curvy] (2) to [bend right] (3);
     \draw[curvy] (4) to [bend right] (5);
     \draw[curvy, bend angle=20] (5) to [bend right] (m);
     \end{scope}
   \begin{scope}[yshift=-5.5cm]
     \node[anchor=west] at (6.5,0.25) {$\DT_m^\partial$ ($m$ odd)};
     \node[unshaded] (0) at (0,1) {};
         \node[label, anchor=east] at (0) {$0$};
     \node[unshaded] (m) at (0,0) {};
         \node[label, anchor=east] at (m) {$m$};
     \node[unshaded] (1) at (1,0) {};
         \node[label, anchor=south] at ($(1) + (0.1,0)$) {$1$};
     \node[unshaded] (2) at (2,0) {};
         \node[label, anchor=south] at (2) {$2$};
     \node[invisible] (3) at (3,0) {};
     \node at (3.5,0) {$\cdots$};
     \node[invisible] (4) at (4,0) {};
     \node[unshaded] (5) at (5,0) {};
         \node[label, anchor=south] at (5) {$m{-}1$};
    \draw[order] (m) to (0);
    \draw[curvy] (m) to [bend left] (1);
    \draw[curvy] (0) to [bend left] (1);
    \draw[curvy] (1) to [bend left] (2);
    \draw[curvy] (2) to [bend left] (3);
    \draw[curvy] (4) to [bend left] (5);
    \draw[curvy, bend angle=20] (5) to [bend left] (m);
    \end{scope}
\end{tikzpicture}
\caption{The dual spaces of the non-trivial Ockham algebras with only finitely many relations}
\label{fig:finiteOckham}
\end{figure}
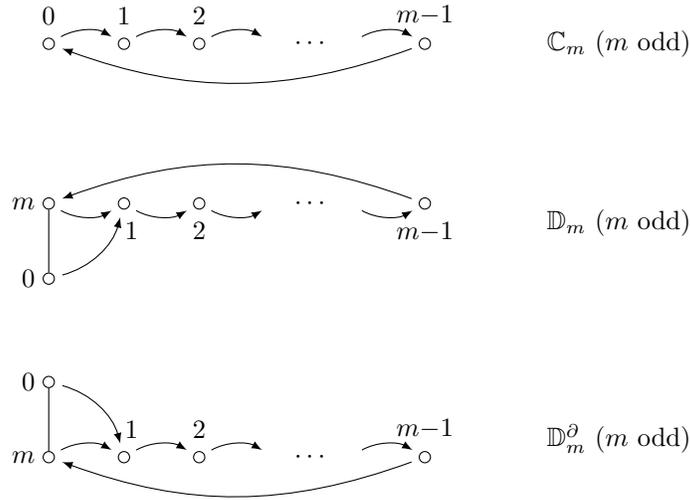

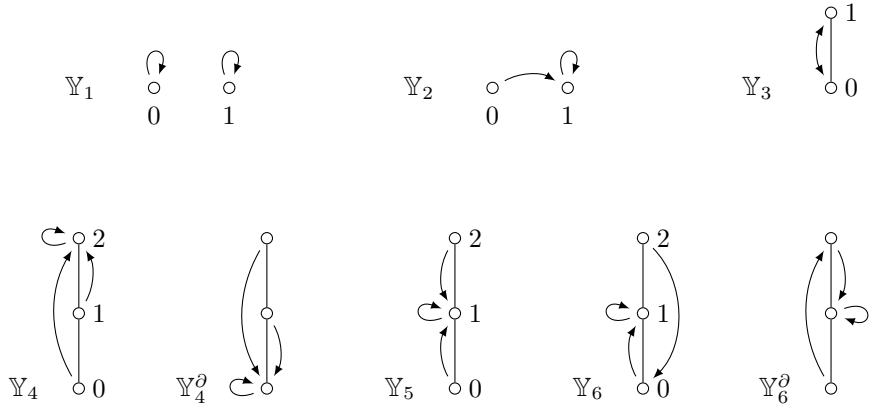
\begin{figure}[ht]
\begin{tikzpicture}
   \begin{scope}[xshift=0.5cm]
     \node[anchor=east] at (0.35,0) {$\Y_1$};
     \node[unshaded] (0) at (1,0) {};
       \node[label, anchor=north] at (0) {$0$};
     \node[unshaded] (1) at (2,0) {};
       \node[label, anchor=north] at (1) {$1$};
     \draw[loopy] (0) to [out=110,in=70] (0);
     \draw[loopy] (1) to [out=110,in=70] (1);
   \end{scope}
   \begin{scope}[xshift=5cm]
     \node[anchor=east] at (0.35,0) {$\Y_2$};
     \node[unshaded] (0) at (1,0) {};
       \node[label, anchor=north] at (0) {$0$};
     \node[unshaded] (1) at (2,0) {};
       \node[label, anchor=north] at (1) {$1$};
     \draw[loopy] (1) to [out=110,in=70] (1);
     \draw[curvy] (0) to [bend left](1);
     \end{scope}
   \begin{scope}[xshift=9.5cm]
     \node[anchor=east] at (0.35,0) {$\Y_3$};
     \node[unshaded] (0) at (1,0) {};
       \node[label, anchor=west] at (0) {$0$};
     \node[unshaded] (1) at (1,1) {};
       \node[label, anchor=west] at (1) {$1$};
     \draw[order] (0) to (1);
     \draw[curvy, <->] (0) to [bend left] (1);
     \end{scope}
   \begin{scope}[xshift=-0.5cm, yshift=-4cm]
     \node[anchor=east] at (0.6,0) {$\Y_4$};
     \node[unshaded] (0) at (1,0) {};
       \node[label, anchor=west] at (0) {$0$};
     \node[unshaded] (1) at (1,1) {};
       \node[label, anchor=west] at (1) {$1$};
     \node[unshaded] (2) at (1,2) {};
       \node[label, anchor=west] at (2) {$2$};
     \draw[order] (0) to (1);
     \draw[order] (1) to (2);
     \draw[loopy] (2) to [out=-160,in=-200] (2);
     \draw[curvy] (1) to [bend right] (2);
     \draw[curvy] (0) to [bend left] (2);
     \end{scope}
   \begin{scope}[xshift=2.00cm, yshift=-4cm]
     \node[anchor=east] at (0.35,0) {$\Y_4^\partial$};
     \node[unshaded] (0) at (1,0) {};
     \node[unshaded] (1) at (1,1) {};
     \node[unshaded] (2) at (1,2) {};
     \draw[order] (0) to (1);
     \draw[order] (1) to (2);
     \draw[loopy] (0) to [out=-160,in=-200] (0);
     \draw[curvy] (1) to [bend left] (0);
     \draw[curvy] (2) to [bend right] (0);
     \end{scope}
   \begin{scope}[xshift=4.5cm, yshift=-4cm]
     \node[anchor=east] at (0.6,0) {$\Y_5$};
     \node[unshaded] (0) at (1,0) {};
       \node[label, anchor=west] at (0) {$0$};
     \node[unshaded] (1) at (1,1) {};
       \node[label, anchor=west] at (1) {$1$};
     \node[unshaded] (2) at (1,2) {};
       \node[label, anchor=west] at (2) {$2$};
     \draw[order] (0) to (1);
     \draw[order] (1) to (2);
     \draw[loopy] (1) to [out=-160,in=-200] (1);
     \draw[curvy] (0) to [bend left] (1);
     \draw[curvy] (2) to [bend right] (1);
     \end{scope}
   \begin{scope}[xshift=7.00cm, yshift=-4cm]
     \node[anchor=east] at (0.6,0) {$\Y_6$};
     \node[unshaded] (0) at (1,0) {};
       \node[label, anchor=west] at (0) {$0$};
     \node[unshaded] (1) at (1,1) {};
       \node[label, anchor=west] at (1) {$1$};
     \node[unshaded] (2) at (1,2) {};
       \node[label, anchor=west] at (2) {$2$};
     \draw[order] (0) to (1);
     \draw[order] (1) to (2);
     \draw[loopy] (1) to [out=-160,in=-200] (1);
     \draw[curvy] (0) to [bend left] (1);
     \draw[curvy, bend angle=45] (2) to [bend left] (0);
     \end{scope}
   \begin{scope}[xshift=9.5cm, yshift=-4cm]
     \node[anchor=east] at (0.6,0) {$\Y_6^\partial$};
     \node[unshaded] (0) at (1,0) {};
     \node[unshaded] (1) at (1,1) {};
     \node[unshaded] (2) at (1,2) {};
     \draw[order] (0) to (1);
     \draw[order] (1) to (2);
     \draw[loopy] (1) to [out=20,in=-20] (1);
     \draw[curvy] (0) to [bend left] (2);
     \draw[curvy] (2) to [bend left] (1);
     \end{scope}
\end{tikzpicture}
\caption{The eight dual-space obstacles}\label{fig:infiniteOckham}
\end{figure}

Up to symmetry, the non-trivial finite Ockham algebras that admit only finitely many relations can be grouped into the following two infinite families.
\begin{itemize}
\item
The Ockham algebras with dual spaces $\CT_1$, $\CT_3$, $\CT_5$, $\dots$: The first member of this family is the $2$-element Boolean algebra, which has dual space~$\CT_1$. In Section~\ref{sec:quasi}, we will show that this family consists precisely of the quasi-primal Ockham algebras. It is known that every quasi-primal algebra admits only finitely many relations~\cite[2.10]{DPW13}.
\item
The Ockham algebras with dual spaces $\DT_1$, $\DT_3$, $\DT_5$, $\dots$: The first member of this family is the $3$-element Stone algebra, which has dual space~$\DT_1$. For each odd number~$m$, let $\SB_m$ denote the Ockham algebra with dual space~$\DT_m$. In Section~\ref{sec:fin}, we will give a natural duality for the variety generated by~$\SB_m$, and see that it mimics very closely the well-known natural duality for Stone algebras~\cite{Dav78,Dav82}. We use this duality to represent the compatible relations on $\SB_m$ and thereby show that $\SB_m$ admits only finitely many relations.
\end{itemize}

Our characterisation for Ockham algebras in general can easily be restricted to yield characterisations within familiar subvarieties. For example, the variety of MS-algebras~\cite{BV83} (which includes both De Morgan algebras and Stone algebras) consists of all Ockham algebras with dual spaces satisfying $x \le g^2(x)$. The only Ockham spaces in Figure~\ref{fig:finiteOckham} that satisfy this condition are $\CT_1$ and~$\DT_1$, and so the only non-trivial finite MS-algebras that admit only finitely many relations are the $2$-element Boolean algebra and the $3$-element Stone algebra.

\section{Background: compatible relations}\label{sec:comp}

This section introduces some basic definitions and results concerning the equivalence of compatible relations. By way of example, we 
first consider two compatible relations on the  $2$-element bounded lattice $\TwB = \langle \{0, 1\}; \vee, \wedge, 0, 1 \rangle$. Define
\[
{\le} := \{00, 01, 11\} \subseteq \{0, 1\}^2
 \ \text{ and }\ 
\rho := \{0000, 0100, 0011, 0111, 1111\} \subseteq \{0, 1\}^4,
\]
as in Figure~\ref{fig:equivalent}.

\begin{figure}[ht]
\begin{tikzpicture}
\begin{scope}[xshift=0.5cm] 
  \node[anchor=east] at (-0.5,0) {${\le}$};
  \node[unshaded] (0) at (0,0) {};
    \node[label, anchor=west] at (0) {$00$};
  \node[unshaded] (01) at (0,1) {};
    \node[label, anchor=west] at (01) {$01$};
  \node[unshaded] (11) at (0,2) {};
    \node[label, anchor=west] at (11) {$11$};
  \draw[order] (0) to (01);
  \draw[order] (01) to (11);
\end{scope}
\begin{scope}[xshift=6cm]
  \node[anchor=east] at (-2,0) {$\rho$};
  \node[unshaded] (0) at (0,0) {};
    \node[label, anchor=west] at (0)  {$0000$};
  \node[unshaded] (a) at ($(0) + (135:1)$) {};
    \node[label, anchor=east] at (a)  {$0100$};
  \node[unshaded] (b) at ($(0) + (45:1)$) {};
    \node[label, anchor=west] at (b)  {$0011$};
  \node[unshaded] (c) at ($(a) + (45:1)$) {};
    \node[label, anchor=west] at (c)  {$0111$};
  \node[unshaded] (1) at ($(c) + (0,1)$) {};
    \node[label, anchor=west] at (1)  {$1111$};
  \draw[order] (0) to (a);
  \draw[order] (0) to (b);
  \draw[order] (a) to (c);
  \draw[order] (b) to (c);
  \draw[order] (c) to (1);
\end{scope}
\end{tikzpicture}
\caption{Two equivalent compatible relations on $\TwB$}
\label{fig:equivalent}
\end{figure}
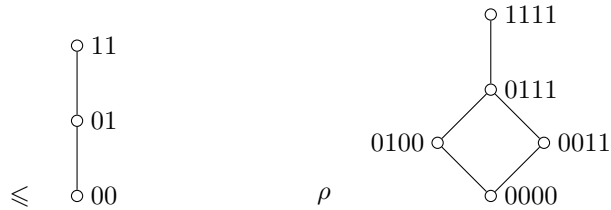

We regard the relations $\le$ and $\rho$ as equivalent, since they are interdefinable as follows:
\begin{align*}
{\le} &= \{\,(a, b) \in \{0, 1\}^2 \mid (a, a, b, b) \in \rho \,\},\\
\rho &= \{\,(a, b, c, d) \in \{0, 1\}^4 \mid a \le b \conj a \le c \conj c = d \,\}.
\end{align*}
In fact, every compatible relation on $\TwB$ is interdefinable in this way with either $\le$ or the unary relation $\{0,1\}$. Hence there is a natural sense in which the $2$-element bounded lattice $\TwB$ has only two compatible relations.

\begin{definition}
Let $A$ be a non-empty finite set, and consider relations $r \subseteq A^k$ and $s \subseteq A^\ell$, for some $k, \ell \ge 1$. Then we say that $r$ is \defn{conjunct-atomic definable} from~$s$ if we can write
\[
r = \bigl\{\, (a_1,\dotsc,a_k) \in A^k \bigm| \textstyle\bigconj_{i=1}^n \Phi_i(a_1,\dotsc,a_k) \,\bigr\},
\]
for some $n \ge 0$, where each $\Phi_i(x_1,\dotsc,x_k)$ is an atomic formula in~$s$. We say that the two relations $r$ and $s$ are \defn{equivalent} if each is conjunct-atomic definable from the other.
\end{definition}

\begin{definition}
Now let $\A$ be a finite algebra. For each $k \ge 1$, a relation $r \subseteq A^k$ is \defn{compatible} with~$\A$ if it is a non-empty subuniverse of~$\A^k$. 
We say that $\A$~\defn{admits only finitely many relations} if the set of compatible relations on~$\A$ has a finite number of equivalence classes (modulo conjunct-atomic interdefinability); otherwise, we say that $\A$~\defn{admits infinitely many relations}.

\end{definition}

The following lemma will help us to find Ockham algebras that admit infinitely many relations, by giving a sense in which this property is `contagious'.

\begin{lemma:trans}[{\cite[3.3]{DP10}}]\label{lem:transfer}
Let\/ $\A$ and $\B$ be finite algebras such that\/ $\A$ is a divisor of\/~$\B$. If\/ $\A$ admits infinitely many relations, then so does~$\B$.
\end{lemma:trans}

The next two lemmas will help with finding Ockham algebras that admit only finitely many relations. First, we define a relation $r$ on $A$ to be \defn{directly decomposable} if, up to permutation of coordinates, it can be written as $p \times q$, for some non-trivial relations $p$ and $q$ on~$A$. Otherwise, the relation $r$ is \defn{directly indecomposable}. The following lemma is implicit in the proof of~\cite[2.10]{DPW13}.

\begin{lemma}\label{lem:indec}
Let\/ $\A$ be a finite algebra. Then $\A$ admits only finitely many relations if and only if the set of all directly indecomposable compatible relations on\/~$\A$ has a finite number of equivalence classes \textup(modulo conjunct-atomic interdefinability\textup).
\end{lemma}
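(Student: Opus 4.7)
The forward direction is immediate: any set of equivalence-class representatives for all compatible relations trivially contains (up to equivalence) representatives for the directly indecomposable ones. The work is in the converse. My plan is to fix a finite list $r_1,\dots,r_n$ of representatives of the $\sim$-classes of directly indecomposable compatible relations on~$\A$ and show that every compatible relation is equivalent to $\prod_{i\in S}r_i$ for some $S\subseteq\{1,\dots,n\}$; this bounds the total number of $\sim$-classes by~$2^n$.

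The converse then rests on three routine manipulations of conjunct-atomic formulas. First (\emph{decomposition}), an easy induction on arity shows that every compatible relation is, up to coordinate permutation, a direct product of directly indecomposable compatible relations: arity-$1$ relations are directly indecomposable, and any decomposable relation splits into factors of smaller arity. Second (\emph{absorption}), for every compatible relation $s$ one has $s\sim s\times s$, via the mutual definitions $s(\vec x)\equiv(s\times s)(\vec x,\vec x)$ and $(s\times s)(\vec x,\vec y)\equiv s(\vec x)\wedge s(\vec y)$. Together with the freedom to permute coordinates by variable substitution, this yields $s\times t\sim s\times s\times t$, so repeated factors in any product may be collapsed. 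Third (\emph{substitution}), if $r\sim r'$ then $r\times s\sim r'\times s$ for every compatible relation~$s$: starting from a conjunct-atomic definition $r'(\vec u)\equiv\bigwedge_j r\bigl(\vec z_j(\vec u)\bigr)$, one has
\[
(r'\times s)(\vec u,\vec v)\;\equiv\;\bigwedge_j (r\times s)\bigl(\vec z_j(\vec u),\vec v\bigr),
\]
with the same tuple $\vec v$ reused in every conjunct; the reverse direction is symmetric. Iterating substitution, any product of directly indecomposables is equivalent to a product of relations drawn from $\{r_1,\dots,r_n\}$, and absorption reduces this further to $\prod_{i\in S}r_i$ for some $S\subseteq\{1,\dots,n\}$.

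The only step that requires any thought is substitution, because $\sim$ is by definition a relation between \emph{single} relations, so it is not automatic that it is a congruence for direct product. The calculation above shows that it is, and this works precisely because conjunct-atomic definability allows us to reuse the $\vec v$-variables across every conjunct without introducing any existential quantifier — this is exactly the feature that distinguishes conjunct-atomic from primitive-positive definability, and the whole argument hinges on it.
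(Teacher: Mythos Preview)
Your argument is correct in outline and in fact fills a gap the paper itself leaves open: the paper does not prove this lemma, merely remarking that it is implicit in the proof of a result from an earlier paper. So there is no proof here to compare against, and your direct argument---decompose into indecomposables, replace each factor by its chosen representative (substitution), then collapse repeats (absorption), yielding at most $2^n$ classes---is exactly the natural one.

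One small caveat: the paper's notion of conjunct-atomic definability allows equality atoms as well as $r$-atoms (see the example in Section~\ref{sec:comp} defining $\rho$ from~$\le$, which uses the conjunct $c=d$). Your substitution formula
\[
(r'\times s)(\vec u,\vec v)\;\equiv\;\bigwedge_j (r\times s)\bigl(\vec z_j(\vec u),\vec v\bigr)
\]
tacitly assumes every conjunct in the definition of $r'$ from $r$ is an $r$-atom. Equality atoms carry over unchanged, and as long as at least one $r$-atom is present the condition $s(\vec v)$ is still enforced by the displayed conjunction. The only edge case is when the definition of $r'$ from $r$ uses \emph{no} $r$-atoms at all; but then $r'$ (and hence also $r$, by symmetry of~$\sim$) is cut out by equalities alone, so $r$ contains every constant tuple $(a,\dots,a)$, and one recovers $s(\vec v)$ as $(r\times s)(v_1,\dots,v_1,\vec v)$. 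With this patch your substitution step goes through, and the rest is as you say.
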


\begin{definition}\label{def:alterego}
Let $\A$ be a finite algebra and let $\AT = \langle A; G, H, R \rangle$ be a
structure on the same underlying set, where
\begin{itemize}
\item $G$ is a set of finitary operations on $A$,
\item $H$ is a set of finitary partial operations on $A$, and
\item $R$ is a set of finitary relations on $A$.
\end{itemize}
Then $\AT$ is an \defn{alter ego} of $\A$ if each relation in 
$R \cup \{\, \graph f \mid f \in G \cup H \,\}$ is compatible with~$\A$. We use $\IScP{\AT}$ to denote the class of all isomorphic copies of non-empty substructures of non-zero finite powers of~$\AT$.
\end{definition}

If $\AT$ is an alter ego of~$\A$, then for any structure $\X$ in $\IScP{\AT}$ and any non-empty subset $S$ of~$X$, we can define an $S$-ary compatible relation on $\A$ by
\[
\homsetrel \X {S} := \bigl\{\, \alpha \rest {S} \bigm| \alpha \colon \X \to \AT \text{ is a morphism} \,\bigr\} \subseteq A^S.
\]
A basic result of clone theory states that, if the alter ego $\AT$ determines the clone of~$\A$, then every compatible relation on $\A$ is equivalent to one of the form $\homsetrel {\AT^n} {S}$, where $S$ is a non-empty subset of~$A^n$.

We can obtain a tighter description of the compatible relations on $\A$ by assuming that the alter ego $\AT$ satisfies the following interpolation condition:
\begin{enumerate}[({I}C)]
\item
for all $n \ge 1$ and all $\X \le \AT^n$, every morphism $\alpha \colon \X \to \AT$ extends to an $n$-ary term function of the algebra~$\A$.
\end{enumerate}

\begin{lemma}[{\cite[2.3]{DP10}}]\label{lem:compat}
Let\/ $\A$ be a finite algebra and let\/ $\AT$ be an alter ego of\/ $\A$ such that\/ \textup{(IC)} holds. Then every compatible relation on\/ $\A$ is equivalent to one of the form\/ $\homsetrel \X {S}$, where $\X \in \IScP\AT$ and\/ $S$ is a non-empty generating set for\/~$\X$.
\end{lemma}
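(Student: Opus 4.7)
The plan is to produce, for each non-empty compatible relation $r \subseteq A^k$, a specific pair $(\X, S)$ with $\X \in \IScP\AT$, $S$ a non-empty generating set of $\X$, and $\homsetrel{\X}{S} = r$ up to the natural identification of $A^S$ with $A^k$. Since equality is stronger than conjunct-atomic equivalence, the lemma follows at once.

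Set $n := |r|$ and enumerate $r = \{a^{(1)}, \dots, a^{(n)}\}$. For each $i \in \{1, \dots, k\}$, form the $n$-tuple $\pi_i := (a^{(1)}_i, \dots, a^{(n)}_i) \in A^n$, that is, the $i$-th projection of $r$ viewed as a single element of $\AT^n$. Let $\X$ be the substructure of $\AT^n$ generated by $S := \{\pi_1, \dots, \pi_k\}$; then $\X \in \IScP\AT$ and $S$ generates $\X$ by construction.

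For the inclusion $r \subseteq \homsetrel{\X}{S}$: each element $a \in r$ indexes a coordinate of $\AT^n$, and the associated evaluation map $e_a \colon \AT^n \to \AT$ is a morphism because every operation, partial operation, and relation in a power of $\AT$ acts coordinate-wise. Restricting $e_a$ to $\X$ gives a morphism whose value on $\pi_i$ is precisely $a_i$, so $e_a \rest{S}$ corresponds to the tuple $a$.

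For the reverse inclusion $\homsetrel{\X}{S} \subseteq r$, the hypothesis (IC) is what does the work. Given any morphism $\beta \colon \X \to \AT$, condition (IC) supplies an $n$-ary term $t$ of $\A$ with $\beta(x) = t^\A(x_1, \dots, x_n)$ for every $x = (x_1, \dots, x_n) \in X$. Substituting $x = \pi_i$ gives $\beta(\pi_i) = t^\A(a^{(1)}_i, \dots, a^{(n)}_i)$, so the tuple $(\beta(\pi_1), \dots, \beta(\pi_k))$ is the result of applying $t^\A$ coordinate-wise in $\A^k$ to the elements $a^{(1)}, \dots, a^{(n)} \in r$. Since $r$ is a subuniverse of $\A^k$, this tuple lies in $r$. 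The only real obstacle in the argument is a bookkeeping one: one must keep straight that the $n$ coordinates of $\AT^n$ are indexed by the elements of $r$ itself, so that the $n$-ary term handed over by (IC) translates into a coordinate-wise action on the $k$-tuples sitting inside $r$ rather than on the $n$-tuples $\pi_i$ themselves. Once that identification is in place, each inclusion is a single formal step.
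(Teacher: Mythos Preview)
The paper does not give its own proof of this lemma; it is quoted verbatim from \cite[2.3]{DP10} and used as a black box. Your argument is the standard one and is correct.

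One small point of precision: the phrase ``natural identification of $A^S$ with $A^k$'' glosses over the case where two of the tuples $\pi_i$ and $\pi_j$ coincide (equivalently, where the $i$th and $j$th coordinates of $r$ are identical). In that case $|S|<k$, and what you actually have is an injection $A^S\hookrightarrow A^k$, $\alpha\mapsto(\alpha(\pi_1),\dots,\alpha(\pi_k))$, rather than a bijection. Your two inclusions still show that $r$ is exactly the image of $\homsetrel{\X}{S}$ under this map, and a coordinate-duplicating injection of this kind is a conjunct-atomic interdefinability in both directions (one direction substitutes a repeated variable into $r$; the other conjoins $\homsetrel{\X}{S}$ with the relevant equalities $x_i=x_j$). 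So the equivalence claim survives, even though the literal equality does not.
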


The interdefinability of relations $\homsetrel \X {S}$ and $\homsetrel \Y {T}$ can be interpreted as a condition on maps between the structures $\X$, $\Y$ and~$\AT$. This leads to general techniques for showing that an algebra $\A$ admits only finitely many relations (see Lemmas~\ref{lem:con1} and~\ref{lem:con2}) or infinitely many relations (see Lemma~\ref{lem:inf}).

\begin{note}
In the theory of natural dualities, an alter ego $\AT$ of a finite algebra $\A$ is equipped with the discrete topology. In this paper, we mostly work within the class $\IScP{\AT}$, where topology plays no role: each structure in this class is finite and so the inherited topology is discrete. We will include the topology only in Section~\ref{sec:fin}, where we consider natural dualities.
\end{note}

\section{Background: Ockham algebras}\label{sec:duality}

This section gives a brief introduction to the restricted Priestley duality for Ockham algebras. Recall that $\A = \langle A; \vee,\wedge,f,0,1\rangle$ is an \defn{Ockham algebra} if
\begin{itemize}
\item
$\A^\flat = \langle A; \vee,\wedge,0,1\rangle$ is a bounded distributive lattice, and
\item
$f$ is a dual endomorphism of~$\A^\flat$.
\end{itemize}
Figure~\ref{fig:familiar} gives several important examples of Ockham algebras: the subdirectly irreducible generators of the subvarieties of Boolean algebras, Kleene algebras, De Morgan algebras, Stone algebras and MS-algebras.

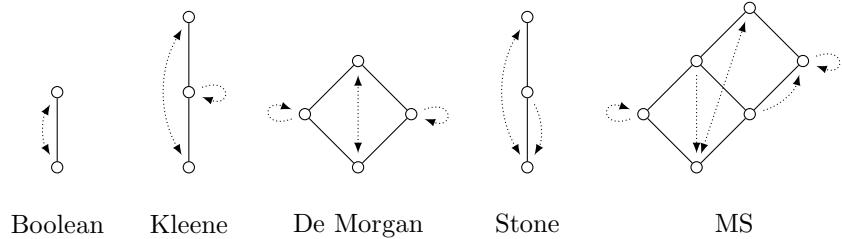
\begin{figure}[ht]
\begin{center}
\begin{tikzpicture}
\begin{scope}
  \node[anchor=north] at (0,-0.5) {Boolean};
  \node[unshaded] (0) at (0,0) {};
  \node[unshaded] (1) at (0,1) {};
  \draw[order] (0) to (1);
  \draw[curvy, <->, densely dotted] (0) to [bend left] (1);
\end{scope}
\begin{scope}[xshift=1.75cm]
  \node[anchor=north] at (0,-0.5) {Kleene};
  \node[unshaded] (0) at (0,0) {};
  \node[unshaded] (a) at (0,1) {};
  \node[unshaded] (1) at (0,2) {};
  \draw[order] (0) to (a);
  \draw[order] (a) to (1);
  \draw[curvy, <->, densely dotted] (0) to [bend left] (1);
  \draw[loopy, densely dotted] (a) to [out=20,in=-20] (a);
\end{scope}
\begin{scope}[xshift=4.0cm]
  \node[anchor=north] at (0,-0.5) {De Morgan};
  \node[unshaded] (0) at (0,0) {};
  \node[unshaded] (a) at ($(0) + (135:1)$) {};
  \node[unshaded] (b) at ($(0) + (45:1)$) {};
  \node[unshaded] (1) at ($(a) + (45:1)$) {};
  \draw[order] (0) to (a);
  \draw[order] (0) to (b);
  \draw[order] (a) to (1);
  \draw[order] (b) to (1);
  \draw[straight, <->, densely dotted] (0) to (1);
  \draw[loopy, densely dotted] (a) to [out=-160,in=-200] (a);
  \draw[loopy, densely dotted] (b) to [out=20,in=-20] (b);
\end{scope}
\begin{scope}[xshift=6.25cm]
  \node[anchor=north] at (0,-0.5) {Stone};
  \node[unshaded] (0) at (0,0) {};
  \node[unshaded] (a) at (0,1) {};
  \node[unshaded] (1) at (0,2) {};
  \draw[order] (0) to (a);
  \draw[order] (a) to (1);
  \draw[curvy, <->, densely dotted] (0) to [bend left] (1);
  \draw[curvy, densely dotted] (a) to [bend left] (0);
\end{scope}
\begin{scope}[xshift=8.5cm]
  \node[anchor=north] at (0.5,-0.5) {MS};
  \node[unshaded] (0) at (0,0) {};
  \node[unshaded] (a) at ($(0) + (135:1)$) {};
  \node[unshaded] (b) at ($(0) + (45:1)$) {};
  \node[unshaded] (c) at ($(a) + (45:1)$) {};
  \node[unshaded] (d) at ($(b) + (45:1)$) {};
  \node[unshaded] (1) at ($(c) + (45:1)$) {};
  \draw[order] (0) to (a);
  \draw[order] (0) to (b);
  \draw[order] (a) to (c);
  \draw[order] (b) to (c);
  \draw[order] (b) to (d);
  \draw[order] (c) to (1);
  \draw[order] (d) to (1);
  \draw[loopy, densely dotted] (a) to [out=-160,in=-200] (a);
  \draw[straight, densely dotted] (c) to (0);
  \draw[curvy, densely dotted] (b) to [bend right] (d);
  \draw[loopy, densely dotted] (d) to [out=20,in=-20] (d);
  \draw[straight, <->, densely dotted] (0) to (1);
\end{scope}
\end{tikzpicture}
\caption{Some subdirectly irreducible Ockham algebras}
\label{fig:familiar}
\end{center}
\end{figure}

An \defn{Ockham space} is a topological structure $\X = \langle X; g, \le, \T \rangle$ such that
\begin{itemize}
\item
$\X^\flat = \langle X; \le, \T \rangle$ is a \defn{Priestley space} (that is, an ordered compact topological space such that, for all $x, y \in X$ with $x \nle y$, there is a clopen down-set $V$ with $x \notin V$ and $y \in V$), and
\item
$g$ is a dual endomorphism of $\X^\flat$ (that is, a continuous order-reversing self-map on~$X$).
\end{itemize}
We shall use $\CO$ and $\CY$ to denote the categories of Ockham algebras and Ockham spaces, respectively. The morphisms of $\CO$ are the Ockham-algebra homomorphisms, and the morphisms of $\CY$ are the continuous order-preserving maps that also preserve the unary operation~$g$.
These two categories are dually equivalent (Urquhart~\cite{Urq79}), with the associated contravariant functors $H \colon \CO \to \CY$ and $K \colon \CY \to \CO$ given on objects as follows.

\begin{definition}
Let $\TwB = \langle \{0,1\}; \vee, \wedge, 0, 1\rangle$ be the $2$-element bounded lattice, and let $\CD$ be the category of bounded distributive lattices. For each Ockham algebra~$\A$, define the Ockham space
\[
H(\A) = \langle \CD(\A^\flat, \TwB); g, \le, \T \rangle,
\]
where $\langle \CD(\A^\flat, \TwB); \le, \T \rangle$ is the Priestley space dual to the bounded distributive lattice~$\A^\flat$ and the unary operation $g$ is given by $g(x) = (x \circ f)'$, for all $x \colon \A^\flat \to \TwB$. Here $'$ denotes the usual Boolean complement on $\{0, 1\}$.

Now let $\TwT = \langle \{0,1\}; \le, \T\rangle$ be the $2$-element chain equipped with the discrete topology, and let $\CP$ be the category of Priestley spaces. For each Ockham space~$\X$, define the Ockham algebra
\[
K(\X) = \langle \CP(\X^\flat, \TwT); \vee, \wedge, f, 0, 1  \rangle,
\]
where $\langle \CP(\X^\flat, \TwT); \vee, \wedge, 0, 1 \rangle$ is the bounded distributive lattice dual to the Priestley space~$\X^\flat$ and the unary operation $f$ is given by $f(\alpha) = (\alpha \circ g)'$, for all $\alpha \colon \X^\flat \to \TwT$.
\end{definition}

We can now finish setting up the duality for Ockham algebras in the natural way. In particular, we have the following definitions.

\begin{definition}\
\begin{itemize}
\item
For each homomorphism $\varphi \colon \A \to \B$ in $\CO$, define $H(\varphi) \colon H(\B) \to H(\A)$ by $H(\varphi)(x) := x \circ \varphi$, for all $x \colon \B^\flat \to \TwB$.
\item
For each morphism $\psi \colon \X \to \Y$ in $\CY$, define $K(\psi) \colon K(\Y) \to K(\X)$ by $K(\psi)(\alpha) := \alpha \circ \psi$, for all $\alpha \colon \Y^\flat \to \TwT$.
\item
For each Ockham algebra~$\A$, the isomorphism $e_\A \colon \A \to KH(\A)$ is given by $e_\A(a)(x) = x(a)$, for all $a \in A$ and $x \colon \A^\flat \to \TwB$.
\item
For each Ockham space~$\X$, the isomorphism $\varepsilon_\X \colon \X \to HK(\X)$ is given by $\varepsilon_\X(x)(\alpha) = \alpha(x)$, for all $x \in X$ and $\alpha \colon \X^\flat \to \TwT$.
\end{itemize}
\end{definition}

This duality for Ockham algebras restricts naturally to the five subvarieties from Figure~\ref{fig:familiar}: the descriptions of the dual spaces are summarised in Table~\ref{tab:familiar} (see~\cite{CF77,Dav82,BV90}).

\begin{table}[ht]
\caption{Dual spaces for familiar subvarieties of Ockham algebras}
\label{tab:familiar}
\renewcommand{\arraystretch}{1.2}
\setlength{\tabcolsep}{1em}
\begin{tabular}{ll}
  \hline
  Subvariety & Dual spaces \\\hline
  Boolean  & $g(x) \approx x$ \\
  Kleene  & $x$ and $g(x)$ are comparable, and $g^2(x) \approx x$  \\
  De Morgan  & $g^2(x) \approx x$ \\
  Stone  & $g(x)$ is the unique maximal above $x$ \\
  MS  & $x \le g^2(x)$ \\
  \hline
\end{tabular}
\end{table}

We finish this section by proving two basic facts about the duality for Ockham algebras that will be needed in later sections. The following is the natural restriction of the corresponding result from Priestley duality (see~\cite[7.4.1]{CD98}).

\begin{lemma}\label{lem:strong}
A homomorphism $\varphi \colon \A \to \B$ in $\CO$ is an embedding \textup(respectively, a surjection\textup) if and only if its dual morphism $H(\varphi) \colon H(\B) \to H(\A)$ in $\CY$ is a surjection \textup(respectively, an embedding\textup).
\end{lemma}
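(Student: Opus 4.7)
The plan is to reduce this lemma to the corresponding statement for unrestricted Priestley duality, namely~\cite[7.4.1]{CD98}. The functor $H$ is built on top of the Priestley functor by adjoining the unary operation $g$, and the key observation is that this extra structure does not interfere with the detection of embeddings or surjections.

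First I would observe that the underlying set-map of an Ockham homomorphism $\varphi \colon \A \to \B$ coincides with that of the bounded-lattice homomorphism $\varphi^\flat \colon \A^\flat \to \B^\flat$, and the underlying set-map of $H(\varphi)$ coincides with that of the Priestley-space morphism $H(\varphi)^\flat \colon H(\B)^\flat \to H(\A)^\flat$. Consequently, surjectivity transfers freely in each case. For embeddings the check is only slightly more involved: any injective $\varphi$ has image $\varphi[A]$ automatically closed under $f$ (since $\varphi$ preserves $f$), so $\varphi$ is an embedding in $\CO$ iff $\varphi^\flat$ is an embedding in $\CD$; analogously, any injective order-reflecting $H(\varphi)$ has image automatically closed under $g$, so $H(\varphi)$ is an embedding in $\CY$ iff $H(\varphi)^\flat$ is an embedding in $\CP$.

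Second, I would note that $H(\varphi)^\flat$ is \emph{literally} the Priestley dual of $\varphi^\flat$: the spaces $H(\A)^\flat$ and $H(\B)^\flat$ were defined as the Priestley duals of $\A^\flat$ and $\B^\flat$, and the composition rule $H(\varphi)(x) = x \circ \varphi$ depends only on $\varphi$ qua bounded-lattice map. Applying~\cite[7.4.1]{CD98} to $\varphi^\flat$ therefore gives the desired equivalences at the reduct level, and the reductions of the first paragraph transport them to the full Ockham setting. The only mild obstacle is confirming that the unary operations $f$ and $g$ do not spoil the reduction, and this is exactly the content of the embedding argument in the first step; everything else is an unpacking of definitions.
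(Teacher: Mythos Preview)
Your proposal is correct and follows exactly the line the paper indicates: the paper does not supply a proof but simply states that the lemma is ``the natural restriction of the corresponding result from Priestley duality (see~\cite[7.4.1]{CD98})''. Your argument is a careful unpacking of precisely that restriction, verifying that the extra unary operations $f$ and $g$ are automatically preserved on images and so do not obstruct the reduction to the bounded-lattice/Priestley-space level.
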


This lemma leads easily to the following result, which will allow us to apply the Transfer Lemma~\ref{lem:transfer} to Ockham algebras from within the dual class $\CY$ of Ockham spaces.

\begin{lemma}\label{lem:divisor}
Let\/ $\A$ and\/ $\B$ be Ockham algebras. Then $\A$ is a divisor of\/ $\B$ if and only if\/ $H(\A)$ is a divisor of\/ $H(\B)$.
\end{lemma}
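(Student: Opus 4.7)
The plan is to translate the problem across the duality by applying Lemma~\ref{lem:strong}, together with the observation that in both $\CO$ and $\CY$ a substructure of a quotient object is a quotient of a substructure (that is, $\mathsf{SH} \subseteq \mathsf{HS}$).

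For the forward direction, I would start from an Ockham algebra $\C$ witnessing $\A \in \HS\B$, with embedding $\iota \colon \C \hookrightarrow \B$ and surjection $\sigma \colon \C \twoheadrightarrow \A$. By Lemma~\ref{lem:strong}, these dualise to a surjection $H(\iota) \colon H(\B) \twoheadrightarrow H(\C)$ and an embedding $H(\sigma) \colon H(\A) \hookrightarrow H(\C)$. I would then pull back, setting $\X := H(\iota)^{-1}(H(\sigma)(H(\A)))$; since $H(\sigma)(H(\A))$ is closed in $H(\C)$ and closed under $g$, and $H(\iota)$ is continuous and commutes with $g$, the set $\X$ is a closed, $g$-stable subset of $H(\B)$ and hence a substructure in $\CY$. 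The restriction of $H(\iota)$ to $\X$ surjects onto $H(\sigma)(H(\A)) \cong H(\A)$, exhibiting $H(\A)$ as a divisor of $H(\B)$.

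For the reverse direction, I would run the symmetric argument on the other side of the duality using $K$. Starting from $\X \in \CY$ with $\X \hookrightarrow H(\B)$ and $\X \twoheadrightarrow H(\A)$, Lemma~\ref{lem:strong} together with the isomorphisms $\A \cong KH(\A)$ and $\B \cong KH(\B)$ produces a surjection $\sigma \colon \B \twoheadrightarrow \mathbf{D}$ and an embedding $\iota \colon \A \hookrightarrow \mathbf{D}$, where $\mathbf{D} := K(\X)$. The pullback $\mathbf{B}' := \sigma^{-1}(\iota(\A))$ is then a subalgebra of $\B$, and $\sigma$ restricted to $\mathbf{B}'$ surjects onto $\iota(\A) \cong \A$, giving $\A \in \HS\B$. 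No step here should be delicate: both pullback verifications are routine once Lemma~\ref{lem:strong} is in hand, the only points to check being that preimages of subalgebras under Ockham-algebra homomorphisms are subalgebras (a standard universal-algebra fact) and that preimages of closed $g$-stable subsets under Ockham-space morphisms are closed and $g$-stable (immediate from continuity and commutation with $g$).
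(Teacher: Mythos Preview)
Your proposal is correct and follows the same approach as the paper: dualise via Lemma~\ref{lem:strong} to land in $\mathsf{SH}$, then use $\mathsf{SH} \subseteq \mathsf{HS}$. The only difference is cosmetic---the paper simply invokes the containment $\mathsf{SH} \subseteq \mathsf{HS}$ in each direction, whereas you spell out the underlying pullback construction explicitly.
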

\begin{proof}
Assume that $\A \in \HS \B$. Then there exists $\C \in \CO$ with an embedding $\varphi \colon \C \hookrightarrow \B$
and a surjection $\psi \colon \C \twoheadrightarrow \A$. By Lemma~\ref{lem:strong}, there is a surjection $H(\varphi) \colon H(\B) \twoheadrightarrow H(\C)$ and an embedding $H(\psi) \colon H(\A) \hookrightarrow H(\C)$. 
Thus $H(\A) \in \SH {H(\B)} \subseteq \HS {H(\B)}$.

Now assume that $H(\A) \in \HS {H(\B)}$. As the categories $\CO$ and $\CY$ are dually equivalent, there exists $\C \in \CO$ with an embedding $H(\varphi) \colon H(\C) \hookrightarrow H(\B)$ and a surjection $H(\psi) \colon H(\C) \twoheadrightarrow H(\A)$. Using Lemma~\ref{lem:strong} again, we have $\varphi \colon \B \twoheadrightarrow \C$ and $\psi \colon \A \hookrightarrow \C$. So $\A \in \SH {\B} \subseteq \HS {\B}$.
\end{proof}

The following lemma, which is used in the next section, generalises part of Lemma~\ref{lem:strong} (cf.~\cite[7.4.1]{CD98}).

\begin{lemma}\label{lem:joint}
A homomorphism $\varphi \colon \A \to \B_1 \times \B_2$ in $\CO$ is an embedding if and only if the two morphisms $H(\pi_i \circ \varphi) \colon H(\B_i) \to H(\A)$ in $\CY$, for $i \in \{1, 2\}$, are jointly surjective.
\end{lemma}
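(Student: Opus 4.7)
The plan is to reduce the statement to Lemma~\ref{lem:strong} by identifying the dual of a product with a disjoint union of its factors' duals. By Lemma~\ref{lem:strong}, the homomorphism $\varphi \colon \A \to \B_1 \times \B_2$ is an embedding if and only if the dual morphism $H(\varphi) \colon H(\B_1 \times \B_2) \to H(\A)$ is surjective. So it suffices to show that $H(\varphi)$ is surjective if and only if the two morphisms $H(\pi_1 \circ \varphi)$ and $H(\pi_2 \circ \varphi)$ are jointly surjective onto $H(\A)$.

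First, I would observe that the projections $\pi_i \colon \B_1 \times \B_2 \to \B_i$ are surjective, so by Lemma~\ref{lem:strong} their duals $H(\pi_i) \colon H(\B_i) \hookrightarrow H(\B_1 \times \B_2)$ are embeddings. The next key ingredient is the standard fact that, as Priestley spaces, $H(\B_1 \times \B_2)$ is the disjoint union of $H(\B_1)$ and $H(\B_2)$ via these two embeddings; this is immediate from the componentwise description of $\CD$-morphisms from a product into $\TwB$. Since the Ockham operation $g$ on the dual is defined by $g(x) = (x \circ f)'$ and $f$ on $\B_1 \times \B_2$ acts componentwise, the same disjoint-union decomposition carries over in $\CY$. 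In particular, $H(\pi_1)(H(\B_1)) \cup H(\pi_2)(H(\B_2)) = H(\B_1 \times \B_2)$.

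Second, by contravariance of $H$, we have $H(\pi_i \circ \varphi) = H(\varphi) \circ H(\pi_i)$, so the image of $H(\pi_i \circ \varphi)$ in $H(\A)$ equals the image of $H(\varphi)$ restricted to the subspace $H(\pi_i)(H(\B_i))$ of $H(\B_1 \times \B_2)$. Taking the union over $i \in \{1, 2\}$ and using that these two subspaces cover $H(\B_1 \times \B_2)$, we conclude that the image of $H(\varphi)$ equals the union of the images of $H(\pi_1 \circ \varphi)$ and $H(\pi_2 \circ \varphi)$. Hence $H(\varphi)$ is surjective precisely when $H(\pi_1 \circ \varphi)$ and $H(\pi_2 \circ \varphi)$ are jointly surjective.

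The main (mild) obstacle is making the identification $H(\B_1 \times \B_2) \cong H(\B_1) \dotcup H(\B_2)$ precise in the Ockham setting rather than merely in the Priestley setting; once the formula $g = (\_\circ f)'$ is combined with the componentwise action of $f$ on the product, however, this is essentially bookkeeping. Everything else is a direct application of Lemma~\ref{lem:strong} and the contravariance of~$H$.
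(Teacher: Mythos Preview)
Your proposal is correct and follows essentially the same route as the paper's proof: both reduce the statement to Lemma~\ref{lem:strong}, invoke the contravariance identity $H(\pi_i \circ \varphi) = H(\varphi) \circ H(\pi_i)$, and use the identification of $H(\B_1 \times \B_2)$ with the disjoint union $H(\B_1) \dotcup H(\B_2)$ via the $H(\pi_i)$ to conclude that $H(\varphi)$ is surjective precisely when the two $H(\pi_i \circ \varphi)$ are jointly surjective. The only difference is presentational: the paper packages the disjoint-union step as the statement that $H(\pi_1) \dotcup H(\pi_2)$ is an isomorphism (coproducts in $\CY$ are disjoint unions), whereas you spell out why this holds using the componentwise action of~$f$.
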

\begin{proof}
For $i \in \{1,2\}$, let $\pi_i \colon \B_1 \times \B_2 \to \B_i$ be the $i$th projection. Then the map $H(\pi_1) \dotcup H(\pi_2) \colon H(\B_1) \dotcup H(\B_2) \to H(\B_1 \times \B_2)$ is an isomorphism, as pairwise coproducts in $\CY$ are given by disjoint union. Let $\varphi_i := \pi_i \circ \varphi \colon \A \to \B_i$. Then $H(\varphi_i) = H(\pi_i \circ \varphi) = H(\varphi) \circ H(\pi_i)$. 
So the following diagram commutes.
\begin{center}
\begin{tikzpicture}
  \node[rectangle] (0) at (0,0) {$H(\B_1) \dotcup H(\B_2)$};
  \node[rectangle] (1) at (0,1.75) {$H(\B_1 \times \B_2)$};
  \node[rectangle] (2) at (3,1.75) {$H(\A)$};
  \path (0) edge [>->>] node {\small $H(\pi_1) \dotcup H(\pi_2)$} (1);
  \path (0) edge [->,anchor=north west] node[yshift=0.1cm] {\small $H(\varphi_1) \dotcup H(\varphi_2)$} (2);
  \path (1) edge [->] node {\small $H(\varphi)$} (2);
\end{tikzpicture}
\end{center}
%
Thus $H(\varphi)$ is surjective if and only if $H(\varphi_1)$ and $H(\varphi_2)$ are jointly surjective. The claim follows because $H(\varphi)$ is surjective if and only if $\varphi$ is an embedding, by Lemma~\ref{lem:strong}.
\end{proof}

\section{Quasi-primal Ockham algebras}\label{sec:quasi}

In this section, we show that an Ockham algebra is quasi-primal if and only if its dual space is isomorphic to~$\CT_m$ from Figure~\ref{fig:finiteOckham}, for some odd~$m$.

We will use the following description of the binary compatible relations on an Ockham algebra. Similar results have been used many times in the literature; see, for example, \cite[3.3]{DP93}, \cite[3.5]{DP96}, \cite[p.\ 218]{CD98} and 
\cite[pp.\ 222--223]{DH04}. We include a proof for completeness.

\begin{lemma}\label{lem:binary}
Let\/ $\X$ be an Ockham space and let\/ $\rb \le K(\X)^2$. Then there exist jointly surjective morphisms $\varphi_1, \varphi_2 \colon \X \to H(\rb)$ such that
\[
r = \{\, (\alpha \circ \varphi_1, \alpha \circ \varphi_2) \mid \alpha \in KH(\rb) \,\}.
\]
\end{lemma}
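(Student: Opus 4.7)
The statement is essentially a duality unwrapping of Lemma~\ref{lem:joint}, so the plan is to start from the inclusion of $\rb$ into $K(\X)^2$, dualise it, and then translate the conclusion back through the natural isomorphisms $e_\rb$ and $\varepsilon_\X$.

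First, let $\iota \colon \rb \hookrightarrow K(\X)^2$ denote the inclusion embedding, let $\pi_i \colon K(\X)^2 \to K(\X)$ be the $i$th projection ($i \in \{1,2\}$), and set $\varphi_i := \pi_i \circ \iota \colon \rb \to K(\X)$. Applying $H$ gives morphisms $H(\varphi_i) \colon H(K(\X)) \to H(\rb)$ in $\CY$; composing with the isomorphism $\varepsilon_\X \colon \X \to H(K(\X))$ yields the desired morphisms
\[
\tilde\varphi_i := H(\varphi_i) \circ \varepsilon_\X \colon \X \to H(\rb).
\]
Because $\iota$ is an embedding, Lemma~\ref{lem:joint} tells us that $H(\varphi_1)$ and $H(\varphi_2)$ are jointly surjective, hence so are $\tilde\varphi_1$ and $\tilde\varphi_2$.

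The remaining step is to verify the formula for $r$. Using the natural isomorphism $e_\rb \colon \rb \to KH(\rb)$, every $\alpha \in KH(\rb)$ has the form $\alpha = e_\rb(\beta)$ for a unique $\beta \in \rb$, and by definition $e_\rb(\beta)(y) = y(\beta)$ for $y \in H(\rb)$. Then for any $x \in X$,
\[
(\alpha \circ \tilde\varphi_i)(x) = e_\rb(\beta)\bigl(H(\varphi_i)(\varepsilon_\X(x))\bigr) = e_\rb(\beta)\bigl(\varepsilon_\X(x) \circ \varphi_i\bigr) = \varepsilon_\X(x)\bigl(\varphi_i(\beta)\bigr) = \varphi_i(\beta)(x).
\]
Therefore $(\alpha \circ \tilde\varphi_1, \alpha \circ \tilde\varphi_2) = (\varphi_1(\beta), \varphi_2(\beta)) = \iota(\beta) = \beta$. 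As $\alpha$ ranges over $KH(\rb)$, $\beta$ ranges over all of $\rb = r$, giving the claimed description of $r$.

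The only mild obstacle is bookkeeping: one has to keep straight which side of the duality each map lives on and ensure the identification of $\X$ with $H(K(\X))$ is handled cleanly. Once the notation is set up, Lemma~\ref{lem:joint} supplies the joint surjectivity and the formula for $r$ is a direct unpacking of the definitions of $e_\rb$ and $\varepsilon_\X$.
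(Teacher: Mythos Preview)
Your proof is correct and follows essentially the same route as the paper: dualise the two projections $\rb \to K(\X)$, invoke Lemma~\ref{lem:joint} for joint surjectivity, and then verify the formula for $r$ via the natural isomorphisms $e_\rb$ and~$\varepsilon_\X$. The only cosmetic difference is that the paper derives $\rho_i \circ e_\rb^{-1}(\alpha) = \alpha \circ \varphi_i$ from the adjunction identity $\rho_i = K(H(\rho_i) \circ \varepsilon_\X) \circ e_\rb$, whereas you unpack the same equality by a direct pointwise computation.
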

\begin{proof}
Define $\A := K(\X)$. Then $\rb \le \A^2$. Let $\rho_1,\rho_2 \colon \rb \to \A$ denote the two projections. The inclusion $\rho_1 \sqcap \rho_2 \colon \rb \to \A^2$ is an embedding, and therefore the morphisms $H(\rho_1), H(\rho_2) \colon H(\A) \to H(\rb)$ are jointly surjective by Lemma~\ref{lem:joint}.

As $\A = K(\X)$, for each $i \in \{1,2\}$ we can define $\varphi_i \colon \X \to H(\rb)$ by ${\varphi_i := H(\rho_i) \circ \varepsilon_{\X}}$. 
Since $\varepsilon_\X \colon \X \to H(\A)$ is an isomorphism, the morphisms $\varphi_1, \varphi_2 \colon \X \to H(\rb)$ are jointly surjective.

Since $e_\rb \colon \rb \to KH(\rb)$ is an isomorphism, we have
\begin{align*}
 r &= \bigl\{\, \bigl(\rho_1(a), \rho_2(a)\bigr) \bigm| a \in r \,\bigr\}\\
   &= \bigl\{\, \bigl( \rho_1 \circ e^{-1}_\rb (\alpha),\,  \rho_2 \circ e^{-1}_\rb (\alpha) \bigr) \bigm| \alpha \in KH(\rb) \,\bigr\}.
\end{align*}
So it remains to check that $\rho_i \circ e^{-1}_\rb (\alpha) = \alpha\circ \varphi_i $, for each $i \in \{1,2\}$ and $\alpha \in KH(\rb)$. Since $\langle H, K, e, \varepsilon \rangle$ is a dual adjunction between the categories $\CO$ and~$\CY$, we have $\rho_i = K(H(\rho_i) \circ \varepsilon_{\X}) \circ e_\rb$; see~\cite[Figure 1.2]{CD98}. Thus
\[
\rho_i \circ e^{-1}_\rb (\alpha)
 = K(H(\rho_i) \circ \varepsilon_{\X})(\alpha)
 = \alpha \circ H(\rho_i) \circ \varepsilon_{\X}
 = \alpha \circ \varphi_i,
\]
as required.
\end{proof}

We next prove some basic facts about cycles in Ockham spaces that will be used in this section and in the final section.

\begin{definition}
Let $\X = \langle X; g, \le, \T\rangle$ be an Ockham space and let $C \subseteq X$. For $m \ge 1$, we will say that $C$ is an \defn{$m$-cycle} of $\X$ if we can enumerate $C$ as $c_0,\dots,c_{m-1}$ such that $g(c_i) = c_{i+1 \pmod m}$. In this case, we say that $C$ is an \defn{odd cycle} if $m$ is odd, and an \defn{even cycle} otherwise.  Note that a $1$-cycle of $\X$ is just a fixpoint of~$g$.
\end{definition}

\begin{lemma}\label{lem:oddcycle}
Let\/ $\X$ be an Ockham space such that every element belongs to an odd cycle. Then $\X$ is an antichain.
\end{lemma}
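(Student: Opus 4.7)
The plan is to argue by contradiction with antisymmetry: suppose $x, y \in X$ satisfy $x \le y$, and show that we must have $x = y$. The key observation is that an odd iterate of an order-reversing map is again order-reversing, so if we can find a single odd power of $g$ that simultaneously fixes both $x$ and~$y$, we will obtain $y \le x$ and hence $x = y$.

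To find such a power, I use the hypothesis that every element lies on an odd cycle. So choose odd integers $m_1$ and $m_2$ with $g^{m_1}(x) = x$ and $g^{m_2}(y) = y$, and set $m := m_1 m_2$. Then $m$ is odd, and both $g^m(x) = x$ and $g^m(y) = y$ by iterating each cycle relation the appropriate number of times. Because $g$ is order-reversing and $m$ is odd, the map $g^m$ is order-reversing as well. Applying $g^m$ to $x \le y$ yields $g^m(y) \le g^m(x)$, i.e.\ $y \le x$. Antisymmetry of $\le$ then forces $x = y$, so $\X$ is an antichain.

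No step here looks like a genuine obstacle: the proof is just the observation that odd iterates of order-reversing maps reverse order, combined with the existence of a common odd period for any pair of elements. The only thing one might pause over is the passage from the individual cycle lengths of $x$ and $y$ to a single common odd exponent, but taking the product (or indeed any common multiple) of the two odd lengths handles this immediately.
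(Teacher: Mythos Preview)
Your proof is correct and follows essentially the same idea as the paper's: both exploit that an odd iterate of an order-reversing map is order-reversing, together with the odd periodicity of every element. Your version is arguably tidier---you take a common odd exponent $m = m_1 m_2$ and apply $g^m$ once, whereas the paper applies $g^{m}$ and $g^{n}$ separately and then chains the resulting inequalities through the even power $g^{m+n}$---but the underlying mechanism is the same.
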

\begin{proof}
Let $c,d \in X$ with $c$ in an $m$-cycle and $d$ in an $n$-cycle, for some odd $m$ and~$n$. Assume that $c \le d$ in~$\X$. As $m$ and $n$ are odd and $g$ is order-reversing, we have
\[
g^m(d) \le g^m(c) = c \quad\text{and}\quad d = g^n(d) \le g^n(c).
\]
As $m + n$ is even, it now follows that
\[
d \le g^n(c) = g^{m+n}(c) \le g^{m + n}(d) = g^m(d) \le c.
\]
Thus $c = d$. Hence $\X$ is an antichain.
\end{proof}

\begin{lemma}\label{lem:evencycle}
Let\/ $\X$ be an Ockham space that contains an even cycle. Then the Ockham space $\Y_3$ from Figure~\ref{fig:infiniteOckham} is a divisor of\/~$\X$.
\end{lemma}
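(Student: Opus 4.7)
The plan is to exhibit $\Y_3$ as a homomorphic image of a sub-Ockham-space of~$\X$, taking for the substructure an even cycle $C = \{c_0,\dots,c_{m-1}\}$ itself: being finite it is topologically closed in the Priestley space~$\X$, and it is closed under~$g$, so $C$ inherits the structure of an Ockham space. The candidate homomorphism is $\varphi \colon C \to \Y_3$ defined by $\varphi(c_i) = i \bmod 2$, which collapses each of the two $g^2$-orbits $E = \{c_i : i \text{ even}\}$ and $O = \{c_i : i \text{ odd}\}$ to a point of~$\Y_3$. Because $g$ swaps $E$ and $O$ on~$C$ and swaps $0$ and~$1$ on~$\Y_3$, the map $\varphi$ is automatically $g$-preserving and surjective. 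The only thing that can fail is order-preservation, and I expect this to be the main obstacle.

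To secure the order, I will first show that no two distinct elements of the same $g^2$-orbit are comparable in~$\X$. Indeed, if $c_i \le c_j$ with $c_i, c_j \in E$ and $i \ne j$, choose $s$ with $c_j = g^{2s}(c_i)$ and apply the order-preserving map $g^{2s}$ repeatedly to obtain a chain $c_i \le g^{2s}(c_i) \le g^{4s}(c_i) \le \cdots$ whose terms eventually cycle back to~$c_i$, forcing $c_i = c_j$ by antisymmetry. Next I will show that all order relations between $E$ and~$O$ point in the same direction. Applying $g^{2k}$ for varying $k$ to an assumed relation $c_e \le c_o$ (with $e \in E$, $o \in O$) produces corresponding relations $c_{e+2k} \le c_{o+2k}$, so every element of~$E$ lies below some element of~$O$; if we also had a reverse relation $c_{o''} \le c_{e''}$ (with $o'' \in O$, $e'' \in E$), then picking $o'$ with $c_{e''} \le c_{o'}$ yields the chain $c_{o''} \le c_{e''} \le c_{o'}$, forcing $o'' = o'$ by the incomparability inside~$O$, and hence $c_{e''} = c_{o''}$, contradicting $e'' \ne o''$.

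The argument then closes quickly. If $C$ is an antichain or every nontrivial comparability satisfies $E \le O$, then $\varphi(c_i) = i \bmod 2$ is order-preserving (vacuously in the first case, by the one-direction property in the second); if instead every comparability satisfies $O \le E$, use the reverse labeling $\varphi(c_i) = (i+1) \bmod 2$. In either case $\varphi$ is a surjective morphism of Ockham spaces from a substructure of~$\X$ onto~$\Y_3$, so $\Y_3 \in \HS\X$.
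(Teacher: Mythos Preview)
Your proof is correct and follows the same overall plan as the paper: restrict to the even cycle $C$ and define a parity map onto~$\Y_3$. The difference lies in how order-preservation is secured. You prove two auxiliary facts (incomparability within each $g^2$-orbit, and unidirectionality of comparabilities between the orbits) via iteration arguments, and then split into cases at the end. The paper instead observes that $g\rest C$ is an order-reversing bijection of a finite ordered set, hence a dual order-automorphism, so it sends maximal elements of $C$ to minimal elements and vice versa; starting the enumeration from a \emph{maximal} element~$x$ therefore forces every $g^{2k}(x)$ to be maximal and every $g^{2k+1}(x)$ to be minimal, which makes the parity map order-preserving immediately, with no case split. Your argument is perfectly sound, but the paper's choice of starting point is worth noting as a shortcut.
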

\begin{proof}
Assume $\CT \le \X$ such that $C$ is an $m$-cycle, for some even~$m$. Since $C$ is finite and $g\rest C \colon C \to C$ is an order-reversing bijection, it follows that $g\rest C$ is a dual order-automorphism of~$\CT$. So $g$ sends maximal elements of $C$ to minimal elements of~$C$, and vice versa. Let $x$ be a maximal element of~$C$. Then we have $C = \{\, g^k(x) \mid k \in \{0,1,\dotsc,m-1\} \,\}$. As $m$ is even, we can define $\varphi \colon \CT \twoheadrightarrow \Y_3$ by
\[
\varphi(g^k(x)) =
\begin{cases}
1, &\text{if $k$ is even,}\\
0, &\text{if $k$ is odd.}
\end{cases}
\]
So $\Y_3 \in \HS\X$, as required.
\end{proof}

\begin{theorem}\label{lem:quasiOckham}
An Ockham algebra is quasi-primal if and only if its dual space is isomorphic to~$\CT_m$ from Figure~\ref{fig:finiteOckham}, for some odd~$m$.
\end{theorem}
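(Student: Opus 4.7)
The plan is to prove both directions of the equivalence using the Ockham duality from Section~\ref{sec:duality}, the cycle lemmas, and the description of binary compatible relations in Lemma~\ref{lem:binary}.

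For $(\Leftarrow)$ [$\X \cong \CT_m$ with $m$ odd $\Rightarrow$ $\A$ is quasi-primal]: Since $\CT_m$ is an antichain (Lemma~\ref{lem:oddcycle}), the lattice reduct $\A^\flat$ is Boolean and $\A$ has a ternary majority term. By the Baker--Pixley theorem~\cite{BP75}, to show that the ternary discriminator is a term operation of $\A$ it suffices to show that it preserves every compatible binary relation $\rb \le \A^2$. Fix such an $\rb$ and apply Lemma~\ref{lem:binary} to obtain jointly surjective morphisms $\varphi_1, \varphi_2 \colon \CT_m \to Y$ in $\CY$ with $Y := H(\rb)$. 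Each image $\varphi_i(\CT_m)$ is a homomorphic image of a single odd $g$-cycle, hence an odd cycle of length dividing $m$; so every element of $Y$ lies in an odd cycle, and Lemma~\ref{lem:oddcycle} forces $Y$ to be an antichain. Therefore $Y$ is either a single odd cycle or a disjoint union $Y_1 \dotcup Y_2$ of two odd cycles. In the disjoint case, $K(Y) \cong K(Y_1) \times K(Y_2)$, and the formula of Lemma~\ref{lem:binary} presents $\rb$ as a direct product $R_1 \times R_2$ of two subalgebras $R_1, R_2 \le \A$, which the discriminator preserves coordinate-wise (since $d(x_1,x_2,x_3) \in \{x_1,x_3\}$ always remains in any subalgebra containing $x_1$ and $x_3$). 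In the single-cycle case, each $\varphi_i(\CT_m)$ is a non-empty $g$-invariant subset of the single cycle $Y$, hence equal to $Y$; so by Lemma~\ref{lem:strong} (and the duality), each $\psi_i := K(\varphi_i) \colon K(Y) \hookrightarrow \A$ is an embedding, and the formula of Lemma~\ref{lem:binary} rewrites as $\rb = \graph(\psi_2 \circ \psi_1^{-1})$, the graph of an isomorphism between two subalgebras of $\A$, which the discriminator preserves by a short calculation using the homomorphism property.

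For $(\Rightarrow)$ [$\A$ quasi-primal $\Rightarrow$ $\X \cong \CT_m$ for some odd $m$]: Every quasi-primal algebra is simple (the discriminator collapses any non-trivial congruence to $\nabla$), and by duality this means $\X$ has no proper non-empty substructure in $\CY$. Since $\X$ is finite and $g$ is a permutation of $X$, substructures of $\X$ are unions of $g$-cycles, so $\X$ is a single $g$-cycle of some length $m$. If $m$ were even, Lemma~\ref{lem:evencycle} would give $\Y_3 \in \HS(\X)$ and Lemma~\ref{lem:divisor} would give $K(\Y_3) \in \HS(\A)$. But subalgebras of quasi-primal algebras inherit the discriminator term and so are themselves quasi-primal, while homomorphic images of simple algebras are either trivial or isomorphic to the original; hence every non-trivial divisor of $\A$ must be quasi-primal. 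This forces $K(\Y_3)$, the three-element Kleene chain, to be quasi-primal, which contradicts the well-known fact that the variety of Kleene algebras is not a discriminator variety (the four-element Kleene chain lies in this variety and fails congruence permutability, witnessed by two non-commuting congruences on it). Hence $m$ is odd, and Lemma~\ref{lem:oddcycle} then forces $\X$ to be an antichain, so $\X \cong \CT_m$.

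The main obstacle is the classification of $Y = H(\rb)$ in direction $(\Leftarrow)$: one must show that joint surjectivity together with the antichain structure of $\CT_m$ forces $Y$ into exactly one of two clean configurations — a single odd cycle or a disjoint union of two odd cycles — with no chain-plus-fixed-point hybrid targets (which would produce additional directly indecomposable binary subuniverses, as occurs for Kleene-type algebras and is what ultimately blocks those from being quasi-primal). This cleanness rests squarely on Lemma~\ref{lem:oddcycle}: every image of $\varphi_i$ is an odd cycle, so every point of $Y$ sits in an odd cycle, and thus $Y$ must be an antichain, ruling out residual order in the target.
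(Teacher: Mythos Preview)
Your proof is correct and follows essentially the same route as the paper: both directions use Lemma~\ref{lem:binary} to classify binary subalgebras via the joint-surjective pair $\varphi_1,\varphi_2$, split into the single-cycle versus two-cycle cases using Lemma~\ref{lem:oddcycle}, and for the converse use simplicity plus Lemmas~\ref{lem:evencycle} and~\ref{lem:divisor} together with the failure of congruence permutability for Kleene algebras. One small point: in $(\Rightarrow)$ you assert that ``$g$ is a permutation of $X$'' as though it were automatic, but this is not true for arbitrary Ockham spaces; it follows here because $g(X)$ is a non-empty substructure of~$\X$, so the absence of proper non-empty substructures forces $g(X)=X$ and hence (by finiteness) $g$ is bijective.
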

\begin{proof}
Let $m$ be odd. We first show that the Ockham algebra $\A := K(\CT_m)$ is quasi-primal. As $\A$ is lattice-based and therefore has a 
ternary near-unanimity term, it suffices to show that every subalgebra of $\A^2$ is either the product of two subalgebras of $\A$ or the graph of a partial automorphism of $\A$ (by~\cite[3.3.12]{CD98}).

Let $\rb \le \A^2$. Then, using Lemma~\ref{lem:binary}, there are jointly surjective morphisms $\varphi_1, \varphi_2 \colon \CT_m \to H(\rb)$ such that
\[
r =  \{\, ( \alpha \circ \varphi_1, \alpha \circ \varphi_2 ) \mid \alpha \in KH(\rb) \,\}.
\]
Since $C_m$ is an odd cycle and $\varphi_1, \varphi_2 \colon \CT_m \to H(\rb)$ are jointly surjective, it follows that the Ockham space $H(\rb)$ is either an odd cycle or the union of two different odd cycles. We consider these two cases separately.

\Case{Case 1} {$H(\rb)$ is an odd cycle.}%
In this case, the morphism $\varphi_i \colon \CT_m \to H(\rb)$ is surjective, for each $i \in \{1,2\}$. We will show that $r$ is the graph of a partial automorphism of~$\A$. Let $a,b,c \in A$ with $(a,b), (a,c) \in r$. Then there exist $\beta, \gamma \in KH(\rb)$ such that $a = \beta \circ \varphi_1$, $b = \beta \circ \varphi_2$ and $a = \gamma \circ \varphi_1$, $c = \gamma \circ \varphi_2$. So $\beta \circ \varphi_1 = a = \gamma \circ \varphi_1$. Since $\varphi_1$ is surjective, we must have $\beta = \gamma$ and hence $b = c$. By symmetry, if $(a,b), (c,b) \in r$, then $a = c$.

\Case{Case 2} {$H(\rb)$ is the union of two different odd cycles.}%
By Lemma~\ref{lem:oddcycle}, the Ockham space $H(\rb)$ is an antichain. So we can write $H(\rb) = \X_1 \dotcup \X_2$, where $\X_i$ is an odd cycle with $\varphi_i \colon \CT_m \to \X_i$. It follows that
\begin{align*}
 r &= \{\, ( \alpha \circ \varphi_1, \alpha \circ \varphi_2 ) \mid \alpha \in KH(\rb) \,\}\\
   &=\{\, \alpha_1 \circ \varphi_1 \mid \alpha_1 \in K(\X_1) \,\} \times \{\, \alpha_2 \circ \varphi_2 \mid \alpha_2 \in K(\X_2) \,\}.
\end{align*}
So $r$ is the product of two subalgebras of~$\A$.

Now assume that $\A$ is a quasi-primal Ockham algebra. Then $\A$ is simple. By Lemma~\ref{lem:strong}, this implies that the dual space $H(\A)$ has no non-empty proper substructures. So $H(\A)$ must be an $m$-cycle, for some $m \ge 1$. Suppose that $m$ is even. Then $\Y_3$ is a divisor of~$H(\A)$, by Lemma~\ref{lem:evencycle}. Thus the $3$-element Kleene algebra $\KB = K(\Y_3)$ belongs to the variety generated by~$\A$, by Lemma~\ref{lem:divisor}. But $\KB$ generates the variety of all Kleene algebras, which is not congruence permutable. This contradicts our assumption that $\A$ is quasi-primal. Therefore $m$ is odd. So $H(\A)$ is isomorphic to~$\CT_m$, by Lemma~\ref{lem:oddcycle}.
\end{proof}

Every quasi-primal algebra admits only finitely many relations~\cite[2.10]{DPW13}. So the Ockham algebra with dual space $\CT_m$ admits only finitely many relations, for each odd~$m$. We obtain an alternative proof of this in the next section.

\section{Ockham algebras with finitely many relations}\label{sec:fin}

In this section, we prove the implication $(2) \Rightarrow (1)$ in our main theorem. Note that $\CT_m$ is a divisor of~$\DT_m$, for each odd~$m$. Using symmetry and Lemmas~\ref{lem:transfer} and~\ref{lem:divisor}, the implication $(2) \Rightarrow (1)$ will follow directly once we show that the Ockham algebra $\SB_m := K(\DT_m)$ admits only finitely many relations, for each odd~$m$.

\begin{example}
The Ockham algebra $\SB_1 = K(\DT_1)$ is the $3$-element Stone algebra.
Figure~\ref{fig:stone} shows the Ockham space~$\DT_1$ and the corresponding Ockham algebra~$\SB_1$, where each element $\alpha$ of $S_1 = \CP(\DT_1^\flat,\TwT)$ is written as the string $\alpha(0)\alpha(1)$.

The natural duality for the variety of Stone algebras~\cite{Dav78,Dav82} is based on $\SB_1$ and its alter ego $\ST_1 := \langle S_1; u, \preccurlyeq, \T \rangle$, where the unary operation~$u$ and the order relation~$\preccurlyeq$ are given in Figure~\ref{fig:stone}; see~\cite[4.3.6]{CD98}. The dual class $\IScPinf {\ST_1}$ consists of all Priestley spaces $\X = \langle X; u, \preccurlyeq, \T \rangle$ with a continuous self-map $u$ that sends each element up to the unique maximal above it.
\end{example}

\begin{figure}[t]
\begin{tikzpicture}
    \begin{scope}
     \node[anchor=north] at (0,-1) {$\DT_1 = \langle \{0,1\}; g, \le, \T \rangle$};
     \node[unshaded] (0) at (0,1) {};
         \node[label, anchor=west] at (0) {$1$};
     \node[unshaded] (1) at (0,0) {};
         \node[label, anchor=west] at (1) {$0$};
     \draw[order] (0) to (1);
     \draw[loopy] (0) to [out=110,in=70] (0);
     \draw[curvy] (1) to [bend left] (0);
     \end{scope}
    \begin{scope}[xshift=4.0cm]
     \node[anchor=north] at (0,-1) {$\SB_1 = \langle S_1; \vee, \wedge, f, 0, 1 \rangle$};
     \node[unshaded] (0) at (0,0) {};
         \node[label, anchor=west] at (0) {$00$};
     \node[unshaded] (a) at (0,1) {};
         \node[label, anchor=west] at (a) {$01$};
     \node[unshaded] (1) at (0,2) {};
         \node[label, anchor=west] at (1) {$11$};
     \draw[order] (0) to (a);
     \draw[order] (a) to (1);
     \draw[curvy, densely dotted, <->] (0) to [bend left] (1);
     \draw[curvy, densely dotted] (a) to [bend left] (0);
     \end{scope}
    \begin{scope}[xshift=7.5cm]
     \node[anchor=north] at (0.5,-1) {$\ST_1 = \langle S_1; u, \preccurlyeq, \T \rangle$};
     \node[unshaded] (0) at (0,1) {};
         \node[label, anchor=east] at (0) {$00$};
     \node[unshaded] (a) at (1,0) {};
         \node[label, anchor=west] at (a) {$01$};
     \node[unshaded] (1) at (1,1) {};
         \node[label, anchor=west] at (1) {$11$};
     \draw[order] (a) to (1);
     \draw[loopy, densely dashed] (0) to [out=110,in=70] (0);
     \draw[loopy, densely dashed] (1) to [out=110,in=70] (1);
     \draw[curvy, densely dashed] (a) to [bend left] (1);
     \end{scope}
\end{tikzpicture}
\caption{The $3$-element Stone algebra $\SB_1$ with its dual space $\DT_1$ and alter ego $\ST_1$}
\label{fig:stone}
\end{figure}

We will generalise this duality by using Davey and Werner's piggyback technique~\cite{DW85,DW86} to give a natural duality for the variety generated by~$\SB_m$, for each odd~$m$. We want to find an alter ego $\ST_m$ that strongly dualises~$\SB_m$, as it will follow automatically that (IC) holds, and so we can then use Lemma~\ref{lem:compat} to describe the compatible relations on~$\SB_m$.

Davey and Priestley~\cite{DP87} generalised the basic piggyback technique to obtain a two-sorted natural duality for the variety generated by any finite subdirectly irreducible Ockham algebra; see~\cite[7.5.5]{CD98}. We use the following simple version of the piggyback technique to show how this duality simplifies in a special case.

\begin{theorem}[Piggyback Duality Theorem \cite{DW85,DW86}]\label{thm:pb}
Let $\A$ be a finite algebra that has a bounded distributive lattice~$\A^\flat$ as a reduct. Assume there is a homomorphism $\omega \colon \A^\flat \to \TwB$ and a set\/ $G \subseteq \End\A$ such that
\begin{enumerate}[\ \normalfont(S)]
\item 
for all distinct $a, b \in A$, there exists $e \in G$ with $\omega(e(a)) \ne \omega(e(b))$.
\end{enumerate}
Let $R$ be the set of all compatible binary relations on $\A$ that are maximal in
\[
\omega^{-1}(\le) := \{\, (a,b) \in A^2 \mid \omega(a) \le \omega(b) \,\}.
\]
Then the alter ego $\AT = \langle A; G, R, \T \rangle$ of $\A$ yields a duality on $\ISP\A$.
\end{theorem}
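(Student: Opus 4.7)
My plan is to run the standard piggyback argument. Showing that $\AT$ yields a duality on $\ISP{\A}$ reduces to proving, for each $\B \in \ISP{\A}$, that the evaluation map $e_\B \colon \B \to ED(\B)$ into the second dual is surjective. Here $D(\B)$ denotes the $\AT$-structure carried on $\ISP{\A}(\B,\A)$ inherited pointwise, and $E$ is the hom-functor back to the algebraic side. Injectivity of $e_\B$ is immediate: for $\B \le \A^S$, the coordinate projections already lie in $D(\B)$ and separate points of $\B$. So I fix a morphism $\alpha \colon D(\B) \to \AT$ and must produce $b \in B$ with $\alpha(x) = x(b)$ for every $x \in D(\B)$. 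The idea is to piggyback on Priestley duality for the distributive-lattice reduct $\B^\flat$: construct a Priestley-space morphism $\hat\alpha \colon \CD(\B^\flat,\TwB) \to \TwT$, recover $b$ from Priestley duality, and verify that $b$ works.

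For $e \in G \cup \{\id A\}$ and $x \in D(\B)$ the composite $\omega \circ e \circ x$ lies in $\CD(\B^\flat,\TwB)$. Using $\B \le \A^S$, every point of $\CD(\B^\flat,\TwB)$ has this form: any lattice homomorphism $\B^\flat \to \TwB$ extends, via the prime ideal theorem, to one of the shape $\omega' \circ \pi_s$ for some $s$ and some $\omega' \in \CD(\A^\flat,\TwB)$; and (S) forces $\omega' = \omega \circ e$ for some $e \in G \cup \{\id A\}$, since a separating subset of the finite Priestley dual $\CD(\A^\flat,\TwB)$ must exhaust it (any missing $\omega'$ would fail to separate the pair given by a join-irreducible and its predecessor). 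So
\[
\hat\alpha(\omega \circ e \circ x) := \omega\bigl(e(\alpha(x))\bigr)
\]
is a candidate definition on all of $\CD(\B^\flat,\TwB)$. It remains to verify that it is well-defined, continuous, and order-preserving; continuity and order-preservation are routine from the corresponding properties of $\alpha$ and of the maps $\omega \circ e$.

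Well-definedness is the main obstacle, and the maximal relations in $R$ have been chosen precisely to secure it. Suppose $\omega \circ e \circ x = \omega \circ e' \circ x'$ in $\CD(\B^\flat,\TwB)$. Then the homomorphism $(e \circ x,\, e' \circ x') \colon \B \to \A^2$ has image contained in $\omega^{-1}({\le})$; being the image of a homomorphism, this image is a subuniverse of $\A^2$, hence a compatible binary subrelation of $\omega^{-1}({\le})$, and by finiteness of $\A$ lies inside some $r \in R$ maximal in $\omega^{-1}({\le})$. Preservation of $r$ by $\alpha$, together with preservation of each $e \in G$ by $\alpha$, gives
\[
\bigl(e(\alpha(x)),\, e'(\alpha(x'))\bigr) = \bigl(\alpha(e \circ x),\, \alpha(e' \circ x')\bigr) \in r \subseteq \omega^{-1}({\le}),
\]
and the symmetric argument yields the reverse comparison. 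Hence $\omega(e(\alpha(x))) = \omega(e'(\alpha(x')))$, as required. I expect this step---identifying the image as a compatible subrelation of $\omega^{-1}({\le})$ and enlarging it to some $r \in R$---to be the crux.

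Once $\hat\alpha$ is in hand, Priestley duality applied to $\B^\flat$ supplies a unique $b \in B$ with $\hat\alpha(y) = y(b)$ for every $y \in \CD(\B^\flat,\TwB)$. Specialising $y = \omega \circ e \circ x$ yields $\omega(e(\alpha(x))) = \omega(e(x(b)))$ for every $e \in G \cup \{\id A\}$, and a final application of (S) to the pair $(\alpha(x), x(b)) \in A^2$ forces $\alpha(x) = x(b)$, completing the surjectivity of $e_\B$ and hence the theorem.
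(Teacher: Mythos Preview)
The paper does not prove this theorem; it is quoted from Davey--Werner \cite{DW85,DW86} and used as a black box in the proof of Theorem~\ref{thm:duality}. So there is no in-paper argument to compare against. Your sketch is recognisably the standard piggyback proof from those sources (cf.\ \cite[Chapter~7]{CD98}): reduce to surjectivity of~$e_\B$, transport the $\AT$-morphism $\alpha$ to a Priestley-space morphism $\hat\alpha$ on $\CD(\B^\flat,\TwB)$, recover $b$ via Priestley duality, and close with~(S). Your treatment of well-definedness---bounding the image of $(e\circ x,\, e'\circ x')$ inside some $r \in R$---is exactly the point of the construction, and your observation that (S) already forces $\{\omega \circ e : e \in G\}$ to exhaust the finite set $\CD(\A^\flat,\TwB)$ is a neat simplification.

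There is one genuine gap. For infinite $\B$ (equivalently, infinite index set~$S$) your justification that every $y \in \CD(\B^\flat,\TwB)$ has the form $\omega\circ e\circ x$ fails as written: a lattice homomorphism $(\A^\flat)^S \to \TwB$ need \emph{not} factor as $\omega' \circ \pi_s$. The Priestley dual of $(\A^\flat)^S$ is a compactification of the disjoint union of $S$ copies of $\CD(\A^\flat,\TwB)$, and the extra boundary points do not come from any single coordinate. The conclusion you want is nevertheless true, via a different argument: the set $Y := \{\omega \circ e \circ x : e \in G,\ x \in D(\B)\}$ is the continuous image of the compact space $D(\B) \times G$ and hence closed in $\CD(\B^\flat,\TwB)$; by (S) together with the projections $\pi_s \in D(\B)$ it separates the points of~$B$. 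In Priestley duality a closed subset of $H(\B^\flat)$ that separates the points of $\B^\flat$ corresponds to the identity congruence and therefore equals $H(\B^\flat)$. Replace the prime-ideal-extension step with this, and your proof goes through.
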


\begin{note}
A finite Ockham algebra $\A$ is subdirectly irreducible if and only if its dual space~$\X$ is one-generated (Urquhart~\cite{Urq79}). In this case, we have $\Var\A = \ISP\A$ if and only if $g$ is order-preserving or $g^2(X) = g(X)$, and then $\A$ is injective in $\Var\A$; see~\cite[3.10]{DP87} and~\cite[4.17]{Gol81}.
\end{note}

Since the Ockham space $\DT_m$ from Figure~\ref{fig:finiteOckham} is one-generated with order-preserving~$g$, the following theorem applies.

\begin{theorem}\label{thm:duality}
Let\/ $\X$ be a finite one-generated Ockham space, with generator~$0$, and assume that\/ $g$ is order-preserving. Define the Ockham algebra $\A = K(\X)$ and the alter ego $\AT = \langle \CP(\X^\flat,\TwT); u, \preccurlyeq, \T \rangle$, where
\begin{itemize}
\item
$u$ is the endomorphism $K(g)$ of~$\A$, and
\item
$\preccurlyeq$ is the \emph{alternating order}
on $A$ given by $\alpha \preccurlyeq \beta$ if and only if
\begin{align*}
\alpha(g^k(0)) \le \beta(g^k(0)), &\text{ for all even\/ }k \ge 0,\\
&\text{and\/ } \alpha(g^k(0)) \ge \beta(g^k(0)), \text{for all odd\/ } k \ge 1.
\end{align*}
\end{itemize}
Then $\AT$ yields a strong duality on $\Var\A$, and so \textup{(IC)} holds.
\end{theorem}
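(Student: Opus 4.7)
The plan is to apply the Piggyback Duality Theorem~\ref{thm:pb}. Take $\omega \colon \A^\flat \to \TwB$ to be evaluation at the generator~$0$, and take $G := \{\, u^k \mid k \ge 0 \,\}$; since $g$ is continuous and order-preserving on~$\X$, it is an Ockham-space endomorphism of~$\X$, so $u = K(g) \in \End\A$ and $G \subseteq \End\A$. Condition~(S) is immediate from one-generation: for distinct $\alpha,\beta \in A = \CP(\X^\flat,\TwT)$, pick $k$ with $\alpha(g^k(0)) \ne \beta(g^k(0))$; then
\[
\omega(u^k(\alpha)) = \alpha(g^k(0)) \ne \beta(g^k(0)) = \omega(u^k(\beta)).
\]

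Next I would identify the set $R$ of compatible binary relations on $\A$ that are maximal in $\omega^{-1}({\le}) = \{\,(\alpha,\beta) \in A^2 \mid \alpha(0) \le \beta(0)\,\}$ and show $R = \{\preccurlyeq\}$. A routine induction on $k$ using $f(\alpha)(x) = \alpha(g(x))'$ gives $f^k(\alpha)(x) = \alpha(g^k(x))$ when $k$ is even and $f^k(\alpha)(x) = \alpha(g^k(x))'$ when $k$ is odd. From this, closure of $\preccurlyeq$ under $f$ reduces to a parity-shift argument (closure under the lattice operations is immediate), so $\preccurlyeq$ is compatible, and the $k=0$ clause places it inside $\omega^{-1}({\le})$. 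Conversely, any compatible $\sigma \subseteq \omega^{-1}({\le})$ is contained in $\preccurlyeq$: if $(\alpha,\beta) \in \sigma$, then $(f^k(\alpha),f^k(\beta)) \in \omega^{-1}({\le})$ for every $k \ge 0$, and evaluating at $0$ via the formula above yields exactly the alternating inequalities defining $\preccurlyeq$.

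The Piggyback Duality Theorem now gives a duality on $\ISP\A$ for the alter ego $\langle A; G, \{\preccurlyeq\}, \T\rangle$. Because any map preserving $u$ automatically preserves every $u^k$, the dual category is unchanged when $G$ is replaced by the singleton $\{u\}$, so the duality transfers to the alter ego $\AT$ of the statement. The note preceding the theorem, applied with our hypotheses on $\X$ and $g$, yields $\Var\A = \ISP\A$ together with the injectivity of $\A$ in $\Var\A$, so the duality lives on all of $\Var\A$. Finally, the standard principle of natural duality theory that a dualising alter ego strongly dualises whenever the algebra is injective in its quasivariety promotes this to a strong duality, from which (IC) is immediate. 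The main obstacle, I expect, is the identification of $R$ in the second step: verifying both closure of $\preccurlyeq$ under $f$ and the claim that no strictly larger compatible relation fits inside $\omega^{-1}({\le})$.
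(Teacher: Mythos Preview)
Your approach matches the paper's almost exactly: the same choice of $\omega$, the same verification of~(S), and essentially the same identification of the unique maximal compatible relation in $\omega^{-1}({\le})$ as~$\preccurlyeq$ (the paper cites~\cite[3.5]{DP87} for the description $(\alpha,\beta)\in r \iff (f^k(\alpha),f^k(\beta))\in\omega^{-1}({\le})$ for all~$k$, which is precisely what your parity computation unpacks).

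The one genuine gap is your final step. The ``standard principle'' that a dualising alter ego automatically strongly dualises whenever $\A$ is injective in its quasivariety is not a theorem; injectivity of $\A$ in $\ISP\A$ does not by itself force injectivity of $\AT$ in the dual category, which is what strong duality needs. The paper's upgrade to strong duality uses two further ingredients that your sketch omits. First, every non-trivial subalgebra of $\A$ is subdirectly irreducible, so $\A$ has irreducibility index~$1$; by~\cite[3.3.7, 3.2.3(iii)]{CD98} this means a dualising alter ego that entails all \emph{partial} endomorphisms already strongly dualises. Second, injectivity of $\A$ reduces partial endomorphisms to endomorphisms, but one must still check that $\End\A$ is generated by~$u$: every endomorphism of $\A$ is $K(e)$ for some $e\in\End\X$, and since $0$ generates $\X$ and $e(0)=g^k(0)$ for some~$k$, one gets $e=g^k$ and hence $K(e)=u^k$. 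Without this last verification your set $G=\{u^k\mid k\ge 0\}$ is not known to exhaust $\End\A$, so even the paper's route via irreducibility index would not close for your alter ego. Add these two points and your argument is complete.
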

\begin{proof}
We want to apply Theorem~\ref{thm:pb}. Define $\omega \colon \A^\flat \to \TwB$ by $\omega(\alpha) := \alpha(0)$, for all $\alpha \in A = \CP(\X^\flat,\TwT)$. Consider $\alpha \ne \beta$ in~$A$. Since $\X$ is generated by~$0$, there is some $k \ge 0$ with $\alpha(g^k(0)) \ne \beta(g^k(0))$. Since $g \in \End\X$, we have $u := K(g) \in \End\A$ with $\omega(u^k(\alpha)) = \alpha\circ g^k(0) \ne \beta\circ g^k(0) = \omega(u^k(\beta))$. Thus condition~(S) holds.

As $\A$ is an Ockham algebra, there is a unique compatible binary relation $r$ on~$\A$ that is maximal in $\omega^{-1}(\le)$, given by
\[
(\alpha, \beta) \in r \ \iff\ (f^k(\alpha), f^k(\beta)) \in \omega^{-1}(\le), \text{ for all } k \ge 0;
\]
see Davey and Priestley~\cite[3.5]{DP87}. For $k$ even, we have $f^k(\alpha) = \alpha \circ g^k$, and for $k$~odd, we have $f^k(\alpha) = (\alpha \circ g^k)'$. So it follows that $r = {\preccurlyeq}$.

By Theorem~\ref{thm:pb}, the alter ego $\AT' := \langle \CP(\X^\flat,\TwT); \End\A, \preccurlyeq, \T \rangle$ yields a duality on $\ISP\A = \Var\A$. Since $\A$ is injective in $\Var\A$, each partial endomorphism of $\A$ extends to an endomorphism of~$\A$. Every non-trivial subalgebra of $\A$ is subdirectly irreducible, so $\A$ has irreducibility index~$1$ and it follows by general results~\cite[3.3.7, 3.2.3(iii)]{CD98} that $\AT'$ yields a strong duality on $\Var\A$.

To complete the proof, it remains to check that $\End\A$ is generated by~$u$. Each endomorphism of $\A$ is of the form $K(e)$, for some $e \in \End\X$. Since $e(0) = g^k(0)$, for some $k \ge 0$, it follows that $e = g^k$ and so $K(e) = u^k$.
\end{proof}

For the Ockham algebra $\SB_m$ with dual space~$\DT_m$, the definition of the alter ego given in the previous theorem simplifies as follows.

\begin{definition}\label{defn:STm}
For $m$ odd, define the \defn{alternating alter ego} of the Ockham algebra $\SB_m$ with dual space $\DT_m$ to be the structure
\[
\ST_m := \langle \CP(\DT_m^\flat,\TwT); u, \preccurlyeq, \T \rangle
\]
from Theorem~\ref{thm:duality}, so that $u \in \End\A$ is given by $u(\alpha) := \alpha \circ g$, and the order $\preccurlyeq$ is given by
\[
\alpha \preccurlyeq \beta \ \iff\ \alpha(0) \le \beta(0) \text{ and } \alpha \rest {D_m \comp \{0\}} = \beta \rest  {D_m \comp \{0\}}.
\]
For example, the alternating alter ego $\ST_1$ of $\SB_1$ agrees with the familiar alter ego from Figure~\ref{fig:stone}, and the alternating alter ego $\ST_3$ of $\SB_3$ is shown in Figure~\ref{fig:ST3}, where each element $\alpha$ of $S_3 = \CP(\DT_3^\flat,\TwT)$ is written as the string $\alpha(0)\alpha(1)\alpha(2)\alpha(3)$.
\end{definition}

\begin{figure}[t]
\begin{tikzpicture}
   \begin{scope}
      \node[unshaded] (0) at (0,1.2) {};
         \node[label, anchor=south] at (0) {$\scriptstyle 0000$};
      \node[unshaded] (1) at (1.5,1.2) {};
         \node[label, anchor=south] at (1) {$\scriptstyle 1111$};
      \node[unshaded] (c) at (1.5,0) {};
         \node[label, anchor=north] at (c) {$\scriptstyle 0111$};
      \node[unshaded] (k) at (3,1.2) {};
         \node[label, anchor=south] at (k) {$\scriptstyle 1001$};
      \node[unshaded] (i) at (3,0) {};
         \node[label, anchor=north] at (i) {$\scriptstyle 0001$};
      \node[unshaded] (j) at (4.5,1.2) {};
         \node[label, anchor=south] at (j) {$\scriptstyle 0010$};
      \node[unshaded] (d) at (6,1.2) {};
         \node[label, anchor=south] at (d) {$\scriptstyle 0100$};
      \node[unshaded] (h) at (7.5,1.2) {};
         \node[label, anchor=south] at (h) {$\scriptstyle 1101$};
      \node[unshaded] (e) at (7.5,0) {};
         \node[label, anchor=north] at (e) {$\scriptstyle 0101$};
      \node[unshaded] (a) at (9,1.2) {};
         \node[label, anchor=south] at (a) {$\scriptstyle 1011$};
      \node[unshaded] (g) at (9,0) {};
         \node[label, anchor=north] at (g) {$\scriptstyle 0011$};
      \node[unshaded] (b) at (10.5,1.2) {};
         \node[label, anchor=south] at (b) {$\scriptstyle 0110$};
      \draw[order] (c) to (1);
      \draw[order] (i) to (k);
      \draw[order] (e) to (h);
      \draw[order] (g) to (a);
      \draw[loopy, densely dashed] (0) to [out=-160,in=-200] (0);
      \draw[curvy, densely dashed] (c) to [bend left] (1);
      \draw[loopy, densely dashed] (1) to [out=-160,in=-200] (1);
      \draw[straight, densely dashed] (k) to [bend right] (j);
      \draw[straight, densely dashed] (j) to [bend right] (d);
      \draw[straight, densely dashed] (i) to [bend right] (j);
      \draw[curvy, densely dashed] (d) to [bend right] (k);
      \draw[straight, densely dashed] (h) to [bend right] (a);
      \draw[straight, densely dashed] (e) to [bend right] (a);
      \draw[straight, densely dashed] (a) to [bend right] (b);
      \draw[straight, densely dashed] (g) to [bend right] (b);
      \draw[curvy, densely dashed] (b) to [bend right] (h);
      \end{scope}
\end{tikzpicture}
\caption{The alternating alter ego $\ST_3 = \langle \CP(\DT_3^\flat,\TwT); u, \preccurlyeq, \T \rangle$ of~$\SB_3$}
\label{fig:ST3}
\end{figure}

Since $\ST_m$ satisfies the interpolation condition (IC) with respect to $\SB_m$, it follows from Lemma~\ref{lem:compat} that every compatible relation on $\SB_m$ is equivalent to one of the form $\homsetrel \X S$, for some $\X \in \IScP{\ST_m}$ and generating set $S$ for~$\X$. To be able to make use of this description of the compatible relations on~$\SB_m$, we next develop an intrinsic description of the topological structures in the dual class~$\IScPinf{\ST_m}$. We shall use the following lemma.

\begin{lemma}\label{lem:sepstructs}
Let\/ $m$ be odd, and let\/ $\ST_m = \langle S_m; u, \preccurlyeq, \T\rangle$ be the alternating alter ego of the Ockham algebra~$\SB_m$.
\begin{enumerate}[ \normalfont(1)] 
\item
The structure $\Z_0 = \langle \{a, 1\}; u, \preccurlyeq, \T\rangle$ 
shown below embeds into~$\ST_m$.
\item
For each divisor $k$ of\/~$m$, the structure\/ $\Z_k = \langle \{0, a_0, a_1, \dots, a_{k-1}\}; u, \preccurlyeq, \T\rangle$ 
shown below embeds into~$\ST_m$.
\end{enumerate}
\end{lemma}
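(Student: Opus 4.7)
The plan is to exhibit explicit embeddings in both parts by producing specific elements of $S_m = \CP(\DT_m^\flat, \TwT)$ with the prescribed behaviour under $u$ and $\preccurlyeq$.

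For part~(1), I would pair the constant $1$-map $\mathbf{1} \in S_m$ with the element $\alpha \in S_m$ defined by $\alpha(0) := 0$ and $\alpha(i) := 1$ for all $i \in \{1,\dots,m\}$. Since $g(D_m) \subseteq \{1,\dots,m\}$, both $\mathbf{1}$ and $\alpha$ are sent to $\mathbf{1}$ by $u$, and $\alpha \preccurlyeq \mathbf{1}$ strictly, because the two maps agree on $D_m \comp \{0\}$ and differ only at $0$. Sending the elements $a$ and $1$ of $\Z_0$ to $\alpha$ and $\mathbf{1}$ respectively gives the required embedding.

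For part~(2), let $\mathbf{0} \in S_m$ be the constant $0$ map, which is a fixpoint of $u$. I would construct a single element $\alpha_0 \in S_m$ whose $u$-orbit has length exactly $k$, and then set $\alpha_i := u^i(\alpha_0)$ for $0 \le i < k$. When $k = 1$, take $\alpha_0 := \mathbf{1}$, the only fixpoint of $u$ other than $\mathbf{0}$. When $k > 1$ and $k \mid m$, define $\alpha_0(0) := 0$ and, for $i \in \{1,\dots,m\}$, set $\alpha_0(i) := 1$ iff $i \equiv 1 \pmod k$; since $k > 1$ and $k \mid m$ force $\alpha_0(m) = 0$, we have $\alpha_0 \in S_m$. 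After identifying $\{1,\dots,m\}$ with $\mathbb{Z}/m\mathbb{Z}$ so that $g$ becomes cyclic shift by $+1$, the pattern $\alpha_0 \rest{\{1,\dots,m\}}$ has minimal cyclic period $k$; combined with the identity $u^j(\alpha_0)(0) = \alpha_0(g^j(0)) = \alpha_0(j)$ for $1 \le j \le m$, this verifies both $u^k(\alpha_0) = \alpha_0$ and that no earlier iterate returns to $\alpha_0$. The elements $\alpha_0,\dots,\alpha_{k-1}$ are then pairwise $\preccurlyeq$-incomparable, since distinct cyclic shifts disagree on $\{1,\dots,m\}$ and so cannot agree on $D_m \comp \{0\}$; and each is $\preccurlyeq$-incomparable to $\mathbf{0}$, which is identically zero on $\{1,\dots,m\}$ whereas each $\alpha_i$ is not. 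Mapping $0 \mapsto \mathbf{0}$ and $a_i \mapsto \alpha_i$ thus embeds $\Z_k$ into $\ST_m$.

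The main technical obstacle is the asymmetric role played by the element $0 \in D_m$: a pattern of minimal cyclic period $k$ on the main cycle $\{1,\dots,m\}$ does not on its own force $u^k(\alpha_0) = \alpha_0$, because the side condition $\alpha_0(k) = \alpha_0(0)$ is also required. My definition of $\alpha_0$ for $k > 1$ is arranged so that this side condition holds automatically, while the degenerate case $k = 1$ needs a separate treatment via the second fixpoint $\mathbf{1}$ of $u$.
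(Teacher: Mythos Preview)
Your proof is correct and follows the same strategy as the paper: exhibit explicit elements of $S_m$ with the required $u$- and $\preccurlyeq$-behaviour. Part~(1) is identical to the paper's argument.

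For part~(2) there is a small cosmetic difference worth noting. The paper defines, uniformly for all divisors $k$ of~$m$ and all $j \in \{0,\dots,k-1\}$, the map $\alpha_j \colon D_m \to \{0,1\}$ by $\alpha_j(i) = 1$ iff $i \equiv j \pmod k$, for \emph{every} $i \in D_m$ including $i=0$. Because $m \equiv 0 \pmod k$, this automatically gives $\alpha_j(m) = \alpha_j(0)$, so $\alpha_j$ is order-preserving without any side condition; one then checks directly that $u(\alpha_j) = \alpha_{j-1 \pmod k}$ and that the $\alpha_j$ together with $\underline 0$ form an antichain. This single formula covers $k=1$ (where $\alpha_0 = \underline 1$) and $k>1$ simultaneously, so the case split and the separate treatment of the value at $0$ in your argument are not needed. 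Your version works, but the paper's choice of indicator is slightly cleaner.
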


\begin{center}
\begin{tikzpicture}
   \begin{scope}
     \node at (-1,0) {$\Z_0$};
     \node[unshaded] (1) at (0,1) {};
         \node[label, anchor=west] at (1) {$1$};
     \node[unshaded] (a) at (0,0) {};
         \node[label, anchor=west] at (a) {$a$};
     \draw[order] (a) to (1);
     \draw[curvy, densely dashed] (a) to [bend left] (1);
     \draw[loopy, densely dashed] (1) to [out=-160,in=-200] (1);
     \end{scope}
     \begin{scope}[xshift=4cm]
     \node at (-1,0) {$\Z_k$};
     \node[unshaded] (0) at (0,0) {};
         \node[label, anchor=north] at (0) {$0$};
     \node[unshaded] (a0) at (1,0) {};
         \node[label, anchor=north] at (a0) {$a_0$};
     \node[unshaded] (a1) at (2,0) {};
         \node[label, anchor=north] at (a1) {$a_1$};
     \node[invisible] (a2) at (3,0) {};
     \node at (3.5,0) {$\cdots$};
     \node[invisible] (a3) at (4,0) {};
     \node[unshaded] (ak-1) at (5,0) {};
         \node[label, anchor=north] at (ak-1) {$a_{k-1}$};
      \draw[curvy, densely dashed] (a0) to [bend right] (a1);
      \draw[curvy, densely dashed] (a1) to [bend right] (a2);
      \draw[curvy, densely dashed] (a3) to [bend right] (ak-1);
      \draw[curvy, bend angle=25, densely dashed] (ak-1) to [bend right] (a0);
      \draw[loopy, densely dashed] (0) to [out=110,in=70] (0);
      \end{scope}
\end{tikzpicture}
\end{center}

\begin{proof}
(1) Recall that $S_m$ consists of all order-preserving maps from $\DT_m$ to $\TwT$. Let $\underline 1 \colon \DT_m^\flat \to \TwT$ be the constant map onto~$1$, and define $\alpha \colon \DT_m^\flat \to \TwT$ by
\begin{equation*}
 \alpha(i) =
 \begin{cases}
 0, &\text{if $i = 0$,}\\
 1, &\text{otherwise.}
 \end{cases}
\end{equation*}
Using Definition~\ref{defn:STm}, we see that $\alpha \preccurlyeq \underline 1$ and $u(\alpha) = \underline 1 = u(\underline 1)$. So $\{ \alpha, \underline 1 \}$ forms a substructure of $\ST_m$ isomorphic to~$\Z_0$.

(2) Let $k$ be a divisor of $m$. We will prove that $\Z_k$ embeds into $\ST_m$. For each $j \in \{0, 1, \dots, k-1\}$, define $\alpha_j \colon D_m \to \{0,1\}$ by
\begin{equation*}
 \alpha_j(i) =
 \begin{cases}
 1, &\text{if $i \equiv j \pmods k$,}\\
 0, &\text{otherwise.}
 \end{cases}
\end{equation*}
Then $\alpha_j(m) = \alpha_j(0)$, as $m \equiv 0 \pmod k$, and it follows that $\alpha_j$ is order-preserving. Now let $\underline 0 \colon D_m \to \{0,1\}$ be the constant map onto~$0$. We want to prove that the set $X := \{\underline 0, \alpha_0, \alpha_1, \dots, \alpha_{k-1}\}$ forms the following substructure of~$\ST_m$.

\begin{center}
\begin{tikzpicture}
    \begin{scope}
     \node[unshaded] (0) at (0,0) {};
         \node[label, anchor=north] at (0) {$\underline 0$};
     \node[unshaded] (a0) at (1,0) {};
         \node[label, anchor=north] at (a0) {$\alpha_0$};
     \node[unshaded] (a1) at (2,0) {};
         \node[label, anchor=north] at (a1) {$\alpha_1$};
     \node[invisible] (a2) at (3,0) {};
     \node at (3.5,0) {$\cdots$};
     \node[invisible] (ak-2) at (4,0) {};
     \node[unshaded] (ak-1) at (5,0) {};
         \node[label, anchor=north] at (ak-1) {$\alpha_{k-1}$};
     \draw[loopy, densely dashed] (0) to [out=110,in=70] (0);
     \draw[curvy, densely dashed] (a1) to [bend left] (a0);
     \draw[curvy, densely dashed] (a2) to [bend left] (a1);
     \draw[curvy, densely dashed] (ak-1) to [bend left] (ak-2);
     \draw[curvy, densely dashed] (a0) to [bend left] (ak-1);
     \end{scope}
\end{tikzpicture}
\end{center}

We first check the order relation $\preccurlyeq$ on~$X$. The maps $\alpha_0$ and $\underline 0$ are incomparable, since $\alpha_0(m) = 1$. For $j \in \{1, \dots, k-1\}$, the maps $\alpha_j$ and $\underline 0$ are incomparable, since $\alpha_j(j) = 1$. Finally, for distinct $j, \ell \in \{0, 1, \dots, k-1\}$, we have $\alpha_j(j) = 1 \nle 0 = \alpha_\ell(j)$ and so $\alpha_j \not\preccurlyeq \alpha_\ell$. Thus $X$ is an antichain.

We now check the action of $u$ on $X$. Clearly, we have $u(\underline 0) = \underline 0$. Now let $j \in \{0, 1, \dots, k-1\}$. We want to show that $u(\alpha_j) = \alpha_\ell$, where $\ell \equiv j - 1 \pmod k$. For each $i \in D_m$, we have
\begin{align*}
 u(\alpha_j)(i) = \alpha_j(g(i)) &=
 \begin{cases}
 1, &\text{if $g(i) \equiv j \pmods k$,}\\ 
 0, &\text{otherwise,}
 \end{cases}\\
 &=
 \begin{cases}
 1, &\text{if $i + 1 \equiv j \pmods k$,}\\ 
 0, &\text{otherwise,}
 \end{cases}\\
 & = \alpha_\ell(i),
\end{align*}
as required. It follows that $X = \{\underline 0, \alpha_0, \alpha_1, \dots, \alpha_{k-1}\}$ forms a substructure of $\ST_m$ isomorphic to~$\Z_k$.
\end{proof}

We can now give an intrinsic description of the topological structures in the dual class $\IScPinf{\ST_m}$.

\begin{theorem}\label{thm:topological}
Let\/ $m$ be odd, and let\/ $\X = \langle X; u, \preccurlyeq, \T \rangle$ be a topological structure of the same type as~$\ST_m$. Then $\X \in \IScPinf{\ST_m}$ if and only if
\begin{enumerate}[ \normalfont(1)] 
\item
$\langle X; \preccurlyeq, \T \rangle$ is a Priestley space,
\item
$\X$ satisfies $x \preccurlyeq y \implies u(x) \approx u(y)$, and
\item
$\X$ satisfies $x \preccurlyeq u^m(x)$.
\end{enumerate}
Moreover, it follows from conditions \textup{(1)--(3)} that
\begin{enumerate}[ \normalfont(1)]\setcounter{enumi}{3}
\item 
each $\preccurlyeq$-connected component of\/ $\X$ has a greatest element,
\item 
$u$ sends each element of\/ $X$ to a maximal element, and
\item 
an element $x$ of\/ $X$ is maximal if and only if\/ $u^m(x) = x$.
\end{enumerate}
\end{theorem}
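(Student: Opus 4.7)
The plan is to prove the biconditional by checking that $\ST_m$ itself satisfies (1)--(3)---so that each condition is inherited by every $\X \in \IScPinf{\ST_m}$---and, conversely, showing that any topological structure $\X$ satisfying (1)--(3) admits enough morphisms $\X \to \ST_m$ to embed into a power of~$\ST_m$. The structural conditions (4)--(6) will be derived from (1)--(3) as a stepping stone toward the morphism construction.

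For the forward direction, condition~(1) holds because $\ST_m$ is finite with the discrete topology, and (1) is preserved by closed substructures of Priestley powers. Conditions~(2) and~(3) follow from Definition~\ref{defn:STm} by direct computation: if $\alpha \preccurlyeq \beta$ then the two maps agree on $D_m \comp \{0\}$, and since $g(D_m) \subseteq D_m \comp \{0\}$ one has $u(\alpha) = u(\beta)$; similarly, $u^m(\alpha)$ agrees with $\alpha$ on $D_m \comp \{0\}$ (as $g^m$ fixes that set) while $u^m(\alpha)(0) = \alpha(m) \ge \alpha(0)$, so $\alpha \preccurlyeq u^m(\alpha)$. Both (2) and (3) are preserved by substructures and products, and hence by $\IScPinf(\cdot)$.

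The derivation of (4)--(6) from (1)--(3) hinges on applying (2) to $x \preccurlyeq u^m(x)$ (from (3)) to obtain $u(x) = u(u^m(x)) = u^{m+1}(x)$; in particular, $u^m(u(x)) = u(x)$, so every element of the form $u(x)$ is a fixed point of~$u^m$. Condition~(6) follows because if $u^m(x) = x$ and $x \preccurlyeq y$ then (2) gives $u^m(x) = u^m(y)$ and (3) gives $y \preccurlyeq u^m(y) = x$, forcing $x = y$; the converse is immediate from~(3). Then (5) follows from (6) applied to~$u(x)$, while (4) is obtained by iterating (2) along any $\preccurlyeq$-zigzag to show that $u$, and hence $u^m$, is constant on each $\preccurlyeq$-connected component $C$, whence by (3) this common value lies $\preccurlyeq$-above every element of $C$ and so is its greatest element.

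The converse direction is the main work. Assume $\X$ satisfies (1)--(3); it suffices to show that for every pair $x, y$ with $x \not\preccurlyeq y$ there is a morphism $\varphi \colon \X \to \ST_m$ with $\varphi(x) \not\preccurlyeq \varphi(y)$. When $y$ is non-maximal, take a Priestley-separating clopen down-set $D_0 \ni y$, $D_0 \not\ni x$; each $c \in M := u^m(X)$ satisfies $c \ne y$ (as $y \notin M$) and is maximal, so $c \not\preccurlyeq y$, and Priestley separation plus compactness of the closed set $M$ yield a clopen up-set $U \supseteq M$ with $y \notin U$. Then $D := D_0 \comp U$ is a clopen down-set of $X$ disjoint from $M$, and the map $\varphi \colon \X \to \Z_0 \hookrightarrow \ST_m$ from Lemma~\ref{lem:sepstructs}(1) (value $a$ on $D$, value $1$ off $D$) witnesses $\varphi(x) = 1 \not\preccurlyeq a = \varphi(y)$. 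When $y$ is maximal, a parallel argument inside the compact zero-dimensional subspace $M$ (on which $u$ is a homeomorphism of finite order) produces pairwise disjoint clopens $V_0, V_1, \ldots, V_{k-1}$ with $V_i \ni u^i(y)$ and $u(V_i) = V_{i+1 \pmods k}$ (where $k$ is the length of the $u$-orbit of $y$) and with $u^m(x) \notin V_0$; mapping each $V_i$ to $a_i$ of the $k$-cycle in $\Z_k \hookrightarrow \ST_m$ (Lemma~\ref{lem:sepstructs}(2)) and the remainder of $M$ to the fixpoint, then composing with $u^m$, yields a morphism $\varphi$ with $\varphi(y) = a_0$ and $\varphi(x) \in \{a_1, \ldots, a_{k-1}, 0\}$, so $\varphi(x) \not\preccurlyeq \varphi(y)$. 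The main technical obstacle is this topological construction in the maximal case: producing a clopen neighborhood $V_0$ of $y$ small enough that $V_0, u(V_0), \ldots, u^{k-1}(V_0)$ are pairwise disjoint (so they can be cyclically permuted by $u$) and also excluding the designated point $u^m(x)$, which relies on the compact Hausdorff zero-dimensional topology of $M$ together with the finite order of $u|_M$.
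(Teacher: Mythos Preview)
Your proposal is correct and follows the same overall strategy as the paper: verify (1)--(3) in~$\ST_m$ for the forward direction, derive (4)--(6) as structural consequences, and for the converse separate $x \not\preccurlyeq y$ by morphisms into the substructures $\Z_0$ (when $y$ is non-maximal) and $\Z_k$ (when $y$ is maximal) from Lemma~\ref{lem:sepstructs}. The non-maximal case and the derivation of (4)--(6) match the paper almost verbatim.

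The one noteworthy difference is in the maximal case. The paper works with $x_0 := u(x)$ and $y_0 := u(y)$, takes $k$ to be the orbit length of~$x_0$ (rather than of~$y$), chooses a single clopen $V \ni x_0$ avoiding $\{y_0, u(x_0), \dotsc, u^{k-1}(x_0)\}$, and then builds the morphism $\Max\X \to \Z_k$ via the \emph{syntactic congruence} $\Syn{\theta_V} = \bigl\{(p,q) \mid (\forall j < m)\ u^j(p) \equiv_V u^j(q)\bigr\}$, which is automatically $u$-invariant. This neatly sidesteps exactly the issue you flag as the main obstacle---arranging that $u$ genuinely cyclically permutes the $V_i$, i.e.\ that $u^k(V_0) = V_0$. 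Your direct construction also works (replace $V_0$ by the finite intersection $\bigcap_{j=0}^{m/k-1} u^{jk}(V_0)$ to force $u^k$-invariance while preserving disjointness and the exclusion of~$u^m(x)$), but you should state this step explicitly, since pairwise disjointness of $V_0, u(V_0), \dotsc, u^{k-1}(V_0)$ alone does not give $u(V_{k-1}) = V_0$. The paper also composes with $u$ rather than~$u^m$; either is fine since $u$ already lands in $\Max X$ by~(5).
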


\begin{proof}
First assume that $\X \in \IScPinf{\ST_m}$. As $\ST_m = \langle S_m; u, \preccurlyeq, \T \rangle$ has an underlying Priestley space, it follows that $\X$ does too. Since conditions~(2) and~(3) are quasi-atomic formulas, we can show they hold in~$\X$ by showing they hold in~$\ST_m$.

Let $\alpha, \beta \in S_m = \CP(\DT_m^\flat, \TwT)$ with $\alpha \preccurlyeq \beta$. Then $\alpha \rest {D_m \comp \{0\}} = \beta \rest {D_m \comp \{0\}}$. Since $g(D_m) \subseteq D_m \comp \{0\}$, we get $u(\alpha) = \alpha \circ g = \beta \circ g = u(\beta)$. Thus $\ST_m$ satisfies~(2).

Let $\alpha \in S_m$. Note that $u^m(\alpha) = \alpha \circ g^m$. Since $\alpha$ is order-preserving, we have $\alpha(0) \le \alpha(m) = \alpha(g^m(0))$. Since $g^m$ fixes each element in $D_m \comp \{0\}$, it follows that $\alpha \preccurlyeq \alpha \circ g^m = u^m(\alpha)$. Thus $\ST_m$ satisfies~(3), and so $\X$ satisfies (1)--(3).

Now assume that $\X$ satisfies \textup{(1)--(3)}. We first show that $\X$ also satisfies (4)--(6). Note that, as $\X$ is a Priestley space, every element of $X$ is less than or equal to a maximal element. Assume that $x$ and $y$ are
maximal elements in the same $\preccurlyeq$-connected component of~$\X$. Then $u(x) = u(y)$, by~(2), and so $u^m(x) = u^m(y)$. As $x$ and $y$ are maximal, 
(3) gives $x = u^m(x) = u^m(y) = y$. This establishes~(4).

Now let $x, y \in X$ with $u(x) \preccurlyeq y$. Then $u^{m+1}(x) = u^m(y)$, by~(2). Using~(3) and~(2) together gives $u(x) = u^{m+1}(x)$. By~(3), it follows that $y \preccurlyeq u^m(y) = u^{m+1}(x) = u(x)$. So $y = u(x)$, and therefore $u(x)$ is a maximal. Thus condition~(5) holds. Condition~(6) follows easily from (3) and~(5).

Let $\Max X$ be the set of maximal elements of~$\X$. Then $\Max X = u(X)$, by~(5) and~(6). Since $\X$ is compact Hausdorff and $u \colon X \to X$ is continuous, the map $u$ is closed (see~\cite[B.1]{CD98}). Hence $\Max X = u(X)$ is a closed subset of~$X$.

We now prove that $\X \in \IScPinf{\ST_m}$. It suffices to find enough morphisms from $\X$ to $\ST_m$ to `separate' the order relation~$\preccurlyeq$. Let $x,y \in X$ with $x \not\preccurlyeq y$. We want to find a morphism $\phi \colon \X \to \ST_m$ such that $\phi(x) \not\preccurlyeq \phi(y)$. We shall consider two cases.

\Case{Case 1} {$y \notin \Max X$.}%
We will use the substructure $\Z_0$ of $\ST_m$ from Lemma~\ref{lem:sepstructs}. Since $\X$ is a Priestley space, 
${\downarrow}y$ and ${\uparrow}x \cup \Max X$ are closed in~$X$. 
As ${\downarrow}y \cap ({\uparrow}x \cup \Max X) = \varnothing$, it follows that there exists a clopen down-set $V$ of $X$ such that $y \in V$, $x \notin V$ and $V \cap \Max X = \varnothing$. Since $\Max X = u(X)$, we can define the morphism $\varphi \colon \X \to \Z_0$ by
\begin{equation*}
\varphi(w) :=
\begin{cases}
 1, &\text{if $w \in X \comp V$,}\\
 a, &\text{if $w \in V$.}
 \end{cases}
\end{equation*}
We have $\varphi(x) = 1 \not\preccurlyeq a = \varphi(y)$, as required.

\Case{Case 2} {$y \in \Max X$.}%
Define $x_0 = u(x)$ and $y_0 = u(y)$ in $u(X) = \Max X$. As $x \not\preccurlyeq y$ and $y \in \Max X$, we have $x_0 \neq y_0$, by~(3) and (6).
Since $u^m(x_0) = x_0$, by~(6), we can choose the smallest number $k \ge 1$ such that $u^k(x_0) = x_0$, and $k$ must be a divisor of~$m$. We will use the substructure $\Z_k$ of $\ST_m$ from Lemma~\ref{lem:sepstructs}.

Let $V$ be a clopen subset of $\Max X$ such that $x_0 \in V$ and
\[
V \cap \{y_0, u(x_0), \dots, u^{k-1}(x_0)\} = \varnothing.
\]
Now let $\theta_V$ denote the equivalence relation on $\Max X$ with the two blocks $V$ and $\Max X \comp V$. Define
\[
\Syn {\theta_V} = \bigl\{\, (y, z) \in (\Max X)^2 \bigm| (\forall j \in \{0,1,\dotsc,m-1\})\ (u^j(y),u^j(z)) \in \theta_V \,\bigr\}.
\]
Using~(6), it is easy to check that
\begin{itemize}
\item $\Syn {\theta_V}$ is an equivalence relation on $\Max X$ such that each block is a clopen subset of $\Max X$,
\item $\Syn {\theta_V}$ is closed under $u$,
\item $\Syn {\theta_V}$ separates the elements $x_0$ and $y_0$, and
\item $\Syn {\theta_V}$ separates the elements $x_0, u(x_0), \dotsc, u^{k-1}(x_0)$.
\end{itemize}
In fact, the equivalence relation $\Syn {\theta_V}$ is the syntactic congruence of the unary algebra $\langle \Max X; u \rangle$ determined by~$\theta_V$ (see~\cite{CDFJ04}).

The closed substructure $\Max \X$ of $\X$ is an antichain. Thus we can now define the morphism $\psi \colon \Max \X \to \Z_k$ by
\begin{equation*}
\psi(w) :=
\begin{cases}
 a_i, &\text{if $(w,u^i(x_0)) \in \Syn {\theta_V}$, for $i \in \{0, 1, \dotsc, k-1\}$,}\\
 0, &\text{otherwise.}
 \end{cases}
\end{equation*}
Note that $u \colon \X \to \Max \X$ is a morphism, by~(2). 
Thus $\varphi : = \psi \circ u \colon \X \to \Z_k$ is a morphism satisfying $\varphi(x) = \psi(x_0) = a_0 \not\preccurlyeq \psi(y_0) = \varphi(y)$, as required.
\end{proof}

\begin{remark}\label{rem:shape}
We shall say that a structure $\X$ in $\IScP{\ST_m}$ is \defn{$u$-connected} if, for all $x,y \in X$, there exist $i,j$ with $u^i(x) = u^j(y)$. It follows straight from \ref{thm:topological}(2) that each structure in $\IScP{\ST_m}$ can be written as a disjoint union of $u$-connected substructures.

Now consider a $u$-connected structure~$\X$ in $\IScP{\ST_m}$. We want to show that $\X$ must have the general shape shown in Figure~\ref{fig:shape}. Let $x$ be a $\preccurlyeq$-maximal element of~$\X$. Then $x = u^m(x)$, by~\ref{thm:topological}(6). Choose the smallest number $k \ge 1$ such that $x = u^k(x)$. Then $k$ is a divisor of~$m$. For each $i \in \{0,\dotsc, k-1\}$, define $m_i := u^i(x)$ and let $P_i$ be the down-set of $\X$ generated by~$m_i$. Since $\X$ is $u$-connected, it follows from~\ref{thm:topological}(6) that $m_0,\dotsc,m_{k-1}$ are precisely the maximal elements of~$\X$. Using~\ref{thm:topological}(2), it follows that $u(P_i) = \{m_{i+1 \pmod k}\}$, for all $i \in \{0,\dotsc,k-1\}$. So the $\preccurlyeq$-connected components of $\X$ are $P_0, \dotsc, P_{k-1}$, and therefore $\X$ has the shape shown in Figure~\ref{fig:shape}.
\end{remark}

\begin{figure}[t]
\begin{tikzpicture}
      \clip (-0.5,-1.5) rectangle (7.5,1);
      \node[unshaded] (m0) at (0,0) {};
         \node[label, anchor=south] at (m0) {$m_0$};
      \coordinate (c0) at (0,-0.5) {};
        \node[label, anchor=north] at (c0) {$P_0$};
      \node[unshaded] (m1) at (1.5,0) {};
         \node[label, anchor=south] at (m1) {$m_1$};
      \coordinate (c1) at (1.5,-0.5) {};
        \node[label, anchor=north] at (c1) {$P_1$};
      \node[unshaded] (m2) at (3,0) {};
         \node[label, anchor=south] at (m2) {$m_2$};
      \coordinate (c2) at (3,-0.5) {};
        \node[label, anchor=north] at (c2) {$P_2$};
      \node (m3) at (4.5,0) {};
      \node at (4.75,-0.5) {$\cdots$};
      \coordinate (ck-2) at (5.5,-0.5) {};
      \node[unshaded] (mk-1) at (7,0) {};
         \node[label, anchor=south] at (mk-1) {$m_{k-1}$};
      \coordinate (ck-1) at (7,-0.5) {};
        \node[label, anchor=north] at (ck-1) {$P_{k-1}$};
      \draw[bigloop] (m0) to [out=-50,in=-130] (m0);
      \draw[bigloop] (m1) to [out=-50,in=-130] (m1);
      \draw[bigloop] (m2) to [out=-50,in=-130] (m2);
      \draw[bigloop] (mk-1) to [out=-50,in=-130] (mk-1);
      \draw[straight, densely dashed] (c0) to (m1);
      \draw[straight, densely dashed] (c1) to (m2);
      \draw[straight, densely dashed] (c2) to (m3);
      \draw[straight, densely dashed] (ck-2) to (mk-1);
      \draw[curvy, bend angle=35, densely dashed] (ck-1) to [bend right] (m0);
\end{tikzpicture}
\caption{The shape of a $u$-connected structure}
\label{fig:shape}
\end{figure}
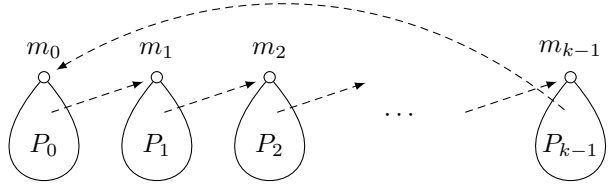

To show that two compatible relations on $\SB_m$ are equivalent, we use the following two general results from~\cite{DP10}. Compatible relations of the form $\homsetrel \X {S}$ were defined just before Lemma~\ref{lem:compat}.

\begin{lemma}[{\cite[2.6]{DP10}}]\label{lem:con1}
Let\/ $\A$ be a finite algebra. Let\/ $\homsetrel \X {S}$ and\/ $\homsetrel \Y {T}$ be compatible relations on~$\A$, associated with an alter ego~$\AT$ of\/~$\A$, such that\/ $T$ is a generating set for~$\Y$. Then\/ $\homsetrel \X {S}$ is conjunct-atomic definable from $\homsetrel \Y {T}$ if and only if the following
holds:
\begin{itemize}
\item
for each map $\phi \colon S \to A$ that does not extend to a morphism from $\X$ to~$\AT$, there exists a morphism\/ $\omega \colon \Y \to \X$ with $\omega(T) \subseteq S$ such that the map\/ $\phi \circ \omega\rest {T} \colon T \to A$ does not extend to a morphism from $\Y$ to~$\AT$.
\end{itemize}
\end{lemma}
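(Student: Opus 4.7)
The plan is to translate the property of conjunct-atomic definability into a statement about maps $h \colon T \to S$, and then relate such maps to morphisms $\Y \to \X$. An atomic formula in $\homsetrel{\Y}{T}$ over the variables $\{x_s : s \in S\}$ is specified by a map $h \colon T \to S$, asserting $(a_{h(t)})_{t \in T} \in \homsetrel{\Y}{T}$. Under the correspondence $\phi \leftrightarrow (\phi(s))_{s \in S}$, conjunct-atomic definability of $\homsetrel{\X}{S}$ from $\homsetrel{\Y}{T}$ therefore amounts to choosing finitely many maps $h_1, \dots, h_n \colon T \to S$ with
\[
\phi \in \homsetrel{\X}{S} \iff \phi \circ h_i \in \homsetrel{\Y}{T} \text{ for every } i.
\]

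For the easier direction $(\Leftarrow)$, the finiteness of $\X$ means that only finitely many maps $\phi \colon S \to A$ fail to extend to a morphism $\X \to \AT$; for each such $\phi$, the hypothesis supplies a morphism $\omega_\phi \colon \Y \to \X$ with $\omega_\phi(T) \subseteq S$ and $\phi \circ (\omega_\phi \rest{T})$ not extending to a morphism $\Y \to \AT$. Taking $h_\phi := \omega_\phi \rest{T}$, the conjunction of atoms indexed by these $h_\phi$ captures $\homsetrel{\X}{S}$: any morphism $\alpha \colon \X \to \AT$ satisfies each atom via the morphism $\alpha \circ \omega_\phi \colon \Y \to \AT$, whereas a non-extending $\phi$ fails the atom indexed by $h_\phi$ by construction.

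For the harder direction $(\Rightarrow)$, I assume the conjunct-atomic definition and consider a $\phi \colon S \to A$ not extending to a morphism $\X \to \AT$; some atom must fail, giving an index $i$ with $\phi \circ h_i \notin \homsetrel{\Y}{T}$. The main obstacle is to upgrade $h_i$ itself to a morphism $\omega_i \colon \Y \to \X$ with $\omega_i \rest{T} = h_i$. Using $\X \in \IScP{\AT}$, fix an embedding $\iota \colon \X \hookrightarrow \AT^m$, so that the coordinate morphisms $\beta_1, \dots, \beta_m \colon \X \to \AT$ separate points of $\X$. Since each $\beta_j \rest{S}$ lies in $\homsetrel{\X}{S}$, the $i$th atom forces $\beta_j \circ h_i \colon T \to A$ to extend; because $T$ generates $\Y$, this extension is a unique morphism $\tilde\beta_j \colon \Y \to \AT$. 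Bundling them into $\omega \colon \Y \to \AT^m$ by $\omega(y) := (\tilde\beta_j(y))_j$ produces a morphism with $\omega(t) = \iota(h_i(t))$ for every $t \in T$, so $\omega(T) \subseteq \iota(X)$. Since $\iota(X)$ is a substructure of $\AT^m$, the preimage $\omega^{-1}(\iota(X))$ is a substructure of $\Y$ containing $T$, hence equal to $Y$. Therefore $\omega$ factors as $\iota \circ \omega_i$ for a morphism $\omega_i \colon \Y \to \X$ extending $h_i$, and this $\omega_i$ is the required witness for $\phi$.
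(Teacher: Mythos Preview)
The paper does not give its own proof of this lemma; it is quoted from~\cite[2.6]{DP10} and used as a black box. Your argument is correct and is essentially the natural proof one would expect: translate atoms into maps $T \to S$, and for the forward direction use the embedding $\X \hookrightarrow \AT^m$ together with the fact that $T$ generates~$\Y$ to promote such a map to a morphism $\Y \to \X$.

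One small point you pass over: the paper's notion of conjunct-atomic definability (see the example opening Section~\ref{sec:comp}) allows equality atoms $x_s = x_{s'}$ in addition to atoms of the form $(x_{h(t)})_{t\in T} \in \homsetrel{\Y}{T}$, so your reduction of a conjunct-atomic definition to a finite list of maps $h_i \colon T \to S$ needs a word of justification. The fix is immediate. If a non-trivial equality atom $x_s = x_{s'}$ with $s \ne s'$ appeared in a definition of $\homsetrel{\X}{S}$, then every morphism $\alpha \colon \X \to \AT$ would satisfy $\alpha(s) = \alpha(s')$; but since $\X \in \IScP{\AT}$, the coordinate projections from some $\AT^m$ separate the points of~$X$, forcing $s = s'$. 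Hence all equality atoms are redundant and may be discarded before running your argument for~$(\Rightarrow)$.
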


\begin{lemma}[{\cite[5.1]{DP10}}]\label{lem:con2}
Let\/ $\A$ be a finite algebra. Let\/ $\homsetrel \X {S}$ and\/ $\homsetrel \Y {T}$ be compatible relations on~$\A$, associated with an alter ego~$\AT$ of\/~$\A$, such that\/ $S$ is a generating set for~$\X$. Assume that\/ $\Y \le \X$ and there is a retraction\/ $\rho \colon \X \twoheadrightarrow \Y$ with $\rho \rest {Y}=\id Y$ and $\rho(S)\subseteq T \subseteq S$. Then $\homsetrel \Y {T}$ is conjunct-atomic definable from $\homsetrel \X {S}$.
\end{lemma}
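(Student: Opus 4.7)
The plan is to exhibit a single atomic formula in $\homsetrel \X S$ that defines $\homsetrel \Y T$, using the retraction $\rho$ to translate between the two index sets. Set $\sigma := \rho\rest{S} \colon S \to T$; this is well defined because $\rho(S) \subseteq T$ by hypothesis. I then expect to prove, for every map $\phi\colon T \to A$, the equivalence
\[
\phi \in \homsetrel \Y T \iff \phi \circ \sigma \in \homsetrel \X S,
\]
which immediately gives the desired definition via the single atomic formula $\homsetrel \X S\bigl((y_{\sigma(s)})_{s \in S}\bigr)$ in the variables $\{y_t \mid t \in T\}$ (well formed because each $\sigma(s) \in T$).

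For the forward direction, take $\phi \in \homsetrel \Y T$ witnessed by a morphism $\alpha\colon \Y \to \AT$ with $\alpha\rest{T} = \phi$. The composite $\alpha \circ \rho\colon \X \to \AT$ is a morphism, and for each $s \in S$ we have $(\alpha \circ \rho)(s) = \alpha(\rho(s)) = \phi(\rho(s)) = \phi(\sigma(s))$, where the middle equality uses $\rho(s) \in T$. Hence $(\alpha \circ \rho)\rest{S} = \phi \circ \sigma$, so $\phi \circ \sigma \in \homsetrel \X S$.

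For the reverse direction, take $\beta\colon \X \to \AT$ with $\beta\rest{S} = \phi \circ \sigma$, and restrict it to the substructure $\Y \le \X$ to obtain a morphism $\beta\rest{Y}\colon \Y \to \AT$. For each $t \in T$ we have $t \in Y$, so the hypothesis $\rho\rest{Y} = \id{Y}$ gives $\rho(t) = t$, and therefore
\[
(\beta\rest{Y})(t) = \beta(t) = (\phi \circ \sigma)(t) = \phi(\rho(t)) = \phi(t).
\]
Thus $\beta\rest{Y}$ extends $\phi$, placing $\phi$ in $\homsetrel \Y T$.

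The whole argument is essentially bookkeeping: the main thing to verify is that the chain of inclusions and identities $T \subseteq S$, $\rho(S) \subseteq T$, $T \subseteq Y$, $\rho\rest{Y} = \id{Y}$ fit together so that $\sigma$ really is a map $S \to T$ that acts as the identity on $T$. Once this is in place no real obstacle remains. Note that the hypothesis that $S$ generates $\X$ plays no role in this direction of the translation: it is relevant in the companion result Lemma~\ref{lem:con1}, where one needs to check that candidate restrictions actually extend to morphisms, but here the extension is supplied directly by composing with $\rho$ or by restricting to $\Y$.
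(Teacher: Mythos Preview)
Your argument is correct. The equivalence $\phi \in \homsetrel \Y T \iff \phi \circ \sigma \in \homsetrel \X S$ is exactly what is needed, and both directions are cleanly verified: the forward direction by precomposing with the retraction~$\rho$, the reverse by restricting to the substructure~$\Y$ and using $\rho\rest T = \id T$ together with $T \subseteq S$. The resulting single atomic formula $\homsetrel \X S\bigl((y_{\sigma(s)})_{s\in S}\bigr)$ is a legitimate conjunct-atomic definition.

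There is nothing to compare against: the paper does not prove this lemma but quotes it from~\cite[5.1]{DP10}. Your proof is the standard one, and your closing remark is also accurate---the hypothesis that $S$ generates~$\X$ is not used here (it is part of the ambient setup from Lemma~\ref{lem:compat} and is genuinely needed in the companion Lemma~\ref{lem:con1}).
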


The next lemma restricts the number (up to equivalence) of compatible relations on $\SB_m$  that come from $u$-connected structures in $\IScP{\ST_m}$.

\begin{lemma}\label{lem:P(m1)}
Let\/ $m$ be odd, and let\/ $\X$ be a $u$-connected structure in\/ $\IScP{\ST_m}$ with generating set~$S$. Then the relation\/ $\homsetrel \X {S}$ is equivalent to\/ $\homsetrel \Y {S \cap Y}$, for some $\Y \le \X$ such that each $\preccurlyeq$-connected component of\/ $\Y$ has one of the following eight forms \textup(with elements of\/ $S$ shaded\textup).
\end{lemma}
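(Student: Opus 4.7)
By Remark~\ref{rem:shape}, the structure $\X$ has maximal elements $m_0, \dots, m_{k-1}$ (with $k$ a divisor of $m$) forming a $u$-cycle, and its $\preccurlyeq$-connected components are the Priestley down-sets $P_0, \dots, P_{k-1}$ generated by these maxima, with $u$ sending all of $P_i$ onto $\{m_{i+1 \pmod k}\}$. Consequently, a morphism $\phi \colon \X \to \ST_m$ is determined by $\phi(m_0)$ together with the individual order-preserving continuous maps $\phi\rest{P_i}$, and each such restriction is highly constrained: from Definition~\ref{defn:STm} (and Figure~\ref{fig:ST3}), the only $\preccurlyeq$-covers in $\ST_m$ have the form $\alpha \prec \beta$ with $\alpha(0)=0$, $\beta(0)=1$, agreeing elsewhere, so each $\phi\rest{P_i}$ factors through a two-element chain.

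The plan is to reduce each component $P_i$ to a small retract $Q_i$ whose $\preccurlyeq$-shape captures only the information about $S$ that is needed for $\homsetrel{\X}{S}$. Within each $P_i$, I would construct a continuous order-preserving retraction $\rho_i \colon P_i \twoheadrightarrow Q_i$ satisfying $\rho_i(m_i)=m_i$ and $\rho_i(S \cap P_i) \subseteq S \cap Q_i$, with $Q_i$ one of the eight listed forms. Using Priestley separation inside $P_i$, I would identify any two elements that are not separated by some clopen down-set meeting $S$ in a relevant way, so that $P_i$ collapses to its top $m_i$ together with at most one non-maximal element recording whether the strict down-set of $m_i$ meets $S$; tagging each remaining element by its $S$-membership then matches one of the eight prescribed configurations. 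Patching the $\rho_i$ via the identity on the $u$-cycle of maxima yields a global retraction $\rho \colon \X \twoheadrightarrow \Y := \bigsqcup_i Q_i$, and the $u$-action on $\Y$ is inherited unambiguously from that on the cycle $\{m_0,\dots,m_{k-1}\}$.

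Setting $T := S \cap Y$, the definability of $\homsetrel{\Y}{T}$ from $\homsetrel{\X}{S}$ is immediate from Lemma~\ref{lem:con2} applied to $\rho$. For the reverse direction I would invoke Lemma~\ref{lem:con1} with the inclusion $\omega \colon \Y \hookrightarrow \X$: given a map $\phi \colon S \to A$ that does not extend to a morphism $\X \to \ST_m$, the factorisation analysis above shows that any extension failure --- whether a global obstruction on the $u$-cycle of maxima or a local one inside some $P_i$ --- is already witnessed on $\Y$, because $\rho$ preserves all of the separating data that can obstruct an extension.

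The principal technical difficulty will be the case analysis involved in constructing the $\rho_i$ and verifying that each $Q_i$ coincides with one of exactly eight normal forms; this requires combining Priestley-theoretic reasoning within each $P_i$ with the observation that $\phi\rest{P_i}$ collapses to a two-chain, so that the only invariants are (i) whether $m_i \in S$ and (ii) whether the strict down-set of $m_i$ meets $S$, together with how those $S$-elements sit order-theoretically. Once the eight normal forms are matched, the equivalence of $\homsetrel{\X}{S}$ and $\homsetrel{\Y}{T}$ follows formally from Lemmas~\ref{lem:con1} and~\ref{lem:con2}.
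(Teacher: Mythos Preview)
Your outline correctly identifies the overall architecture: reduce each $\preccurlyeq$-component to a small normal form, use a retraction $\rho$ together with Lemma~\ref{lem:con2} for one direction, and use Lemma~\ref{lem:con1} for the other. The easy direction is fine. But the hard direction, as you have described it, does not work.

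You propose to apply Lemma~\ref{lem:con1} using \emph{only the inclusion} $\omega \colon \Y \hookrightarrow \X$, asserting that ``any extension failure \dots\ is already witnessed on~$\Y$''. This is precisely what is not true in general. Suppose $P_0$ has height at least~$2$ and contains elements $x_1 \preccurlyeq x_2$ lying outside your reduced component~$Q_0$, and suppose $\varphi \colon S \to S_m$ satisfies $\varphi(x_1) \not\preccurlyeq \varphi(x_2)$ but is otherwise well behaved. Then $\varphi\rest{T}$ may well extend to a morphism $\Y \to \ST_m$, since the offending pair $x_1,x_2$ is absent from~$T$; yet $\varphi$ does not extend to~$\X$. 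The inclusion gives you nothing here. What Lemma~\ref{lem:con1} actually requires is that, for each non-extending $\varphi$, you produce a morphism $\omega \colon \Y \to \X$ \emph{depending on~$\varphi$}: in the situation above, one must send the chain $a \preccurlyeq b$ in $Q_0$ onto the pair $x_1 \preccurlyeq x_2$ so that $\varphi \circ \omega$ inherits the failure. Similar $\varphi$-dependent constructions of $\omega$ are needed when the failure is that $\varphi$ sends two elements of $P_0 \setminus \{m_0\}$ into different $\preccurlyeq$-components of~$\ST_m$ (here one needs two incomparable coatoms in $Q_0$, which is why the ``V'' and ``Y'' shapes appear among the eight forms). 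The r\^ole of the eight normal forms is not merely to record invariants of $P_i$ relative to~$S$; it is to be rich enough that, for every possible mode of failure of~$\varphi$, there is a morphism $\omega \colon \Y \to \X$ detecting it. Your invariants ``(i) whether $m_i \in S$ and (ii) whether the strict down-set meets~$S$'' are too coarse to explain the list --- note that $P_i \setminus \{m_i\} \subseteq S$ always, so (ii) carries no information --- and your observation that morphisms into $\ST_m$ factor through a $2$-chain, while correct, does not by itself supply the required $\omega$.

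In short: the missing idea is that the morphism $\omega$ in Lemma~\ref{lem:con1} must vary with~$\varphi$, and constructing these $\omega$'s (one per failure mode) is the substance of the argument. The paper carries this out by an explicit case analysis on the shape of $P_0$ and the type of failure of~$\varphi$.
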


\smallskip
\begin{center}
\begin{tikzpicture}
     \node[shaded] (0) at (0,1.4) {};
     \node[unshaded] (1) at (1.25,1.4) {};
     \node[shaded] (2) at (2.5,1.4) {};
     \node[shaded] (3) at (2.5,0.7) {};
     \draw[order] (2) to (3);
     \node[unshaded] (4) at (3.75,1.4) {};
     \node[shaded] (5) at (3.75,0.7) {};
     \draw[order] (4) to (5);
     \node[unshaded] (9) at (5.5,1.4) {};
     \node[shaded] (10) at (5,0.7) {};
     \node[shaded] (11) at (6,0.7) {};
      \draw[order] (9) to (10);
      \draw[order] (9) to (11);
     \node[shaded] (6) at (7.25,1.4) {};
     \node[shaded] (7) at (7.25,0.7) {};
     \node[shaded] (8) at (7.25,0) {};
      \draw[order] (6) to (7);
      \draw[order] (7) to (8);
     \node[unshaded] (12) at (8.5,1.4) {};
     \node[shaded] (13) at (8.5,0.7) {};
     \node[shaded] (14) at (8.5,0) {};
      \draw[order] (12) to (13);
      \draw[order] (13) to (14);
     \node[unshaded] (15) at (10.25,1.4) {};
     \node[shaded] (16) at (9.75,0.7) {};
     \node[shaded] (17) at (10.75,0.7) {};
     \node[shaded] (18) at (10.75,0) {};
      \draw[order] (15) to (16);
      \draw[order] (15) to (17);
      \draw[order] (17) to (18);
\end{tikzpicture}
\end{center}

\begin{proof}
Assume $\X$ has a $\preccurlyeq$-connected component that does not have one of the eight allowable forms shown above. We will prove that there is a proper substructure $\Y$ of $\X$ such that the two relations $\homsetrel \X {S}$ and $\homsetrel \Y {S \cap Y}$ are equivalent, where $S \cap Y$ is a generating set for~$\Y$. Since $\X$ is finite, the result will follow by induction.

By Remark~\ref{rem:shape}, the structure $\X = \langle X; u, \preccurlyeq\rangle$ has the shape shown in Figure~\ref{fig:shape}, for some divisor $k$ of~$m$. Define $\Max X := \{m_0,m_1, \dotsc, m_{k-1}\}$. Then $X \comp \Max X \subseteq S$, as $u(X) = \Max X$ and the set $S$ generates~$\X$. In particular, we have $P_0\comp \{m_0\} \subseteq S$. Let $\P_0 = \langle P_0; \preccurlyeq\rangle$ be the induced ordered set on the $\preccurlyeq$-component $P_0$ of~$\X$. Without loss of generality, we can assume that $\P_0$ does not have one of the eight allowable forms. So one of the five cases described in Table~\ref{tab:proof} must apply.

Depending on which case applies, choose a subset $\{a\}$, $\{a,b\}$, $\{a,c\}$ or $\{a,b,c\}$ of $P_0$ according to the appropriate diagram of $\P_0$ in Table~\ref{tab:proof}. The sub-ordered set $\P_0'$ of $\P_0$ shown in Table~\ref{tab:proof} is properly contained in~$\P_0$, as we are assuming that $\P_0$ does not have one of the eight allowable forms.

\begin{table}[ht] 
\caption{Choosing the subset $P_0'$ of $P_0$}\label{tab:proof}
\begin{center}
\begin{tabular}{l l}
\hline
\parbox[t]{4.5cm}{${}$\\\emph{Case 1:} $\P_0$ has height $1$,\\ $m_0 \in S$, and $\abs {P_0} \ge 3$.}
&
\begin{tikzpicture}[baseline=1.5cm]
   \begin{scope}
     \node[anchor=west] at (-1.5,1) {$\P_0$};
     \node[shaded] (m0) at (0,1) {};
        \node[label, anchor=south] at (m0) {$m_0$};
     \node[shaded] (a1) at (-1,0) {};
        \node[label, anchor=north] at (a1) {$a$};
     \node[shaded] (a2) at (0,0) {};
     \node at (0.75,0) {$\cdots$};
     \node[shaded] (an) at (1.5,0) {};
      \draw[order] (m0) to (a1);
      \draw[order] (m0) to (a2);
      \draw[order] (m0) to (an);
   \end{scope}
   \begin{scope}[xshift=4cm]
     \node at (-1,1) {$\P_0'$};
     \node[shaded] (m0) at (0,1) {};
        \node[label, anchor=south] at (m0) {$m_0$};
     \node[shaded] (a) at (0,0) {};
        \node[label, anchor=north] at (a) {$a$};
      \draw[order] (m0) to (a);
   \end{scope}
\end{tikzpicture}
\\\hline
\parbox[t]{4.5cm}{${}$\\\emph{Case 2:} $\P_0$ has height $1$,\\ $m_0 \notin S$, and $\abs {P_0} \ge 4$.}
&
\begin{tikzpicture}[baseline=1.5cm]
   \begin{scope}
     \node[anchor=west] at (-1.5,1) {$\P_0$};
     \node[unshaded] (m0) at (0,1) {};
        \node[label, anchor=south] at (m0) {$m_0$};
     \node[shaded] (a1) at (-1,0) {};
        \node[label, anchor=north] at (a1) {$a$};
     \node[shaded] (a2) at (0,0) {};
        \node[label, anchor=north] at (a2) {$c$};
     \node at (0.75,0) {$\cdots$};
     \node[shaded] (an) at (1.5,0) {};
      \draw[order] (m0) to (a1);
      \draw[order] (m0) to (a2);
      \draw[order] (m0) to (an);
   \end{scope}
   \begin{scope}[xshift=4cm]
     \node at (-1,1) {$\P_0'$};
     \node[unshaded] (m0) at (0,1) {};
        \node[label, anchor=south] at (m0) {$m_0$};
     \node[shaded] (a) at (-0.5,0) {};
        \node[label, anchor=north] at (a) {$a$};
     \node[shaded] (c) at (0.5,0) {};
        \node[label, anchor=north] at (c) {$c$};
      \draw[order] (m0) to (a);
      \draw[order] (m0) to (c);
   \end{scope}
\end{tikzpicture}
\\\hline
\parbox[t]{4.5cm}{${}$\\\emph{Case 3:} $\P_0$ has height at\\ least $2$, and $m_0 \in S$.}
&
\begin{tikzpicture}[baseline=2cm]
   \clip (-1.5,-0.6) rectangle (5,2);
   \begin{scope}
      \node[anchor=west] at (-1.5,1.5) {$\P_0$};
      \node[shaded] (m0) at (0,1.5) {};
         \node[label, anchor=south] at (m0) {$m_0$};
      \node[shaded] (a) at (0,0.75) {};
         \node[label, anchor=west] at (a) {$b$};
      \node[shaded] (b) at (0,0) {};
         \node[label, anchor=west] at (b) {$a$};
      \draw[densely dotted, min distance=100pt] (m0) to [out=-135,in=-45] (m0);
      \draw[order] (m0) to (a);
      \draw[order] (a) to (b);
   \end{scope}
   \begin{scope}[xshift=4cm]
      \node at (-1,1.5) {$\P_0'$};
     \node[shaded] (m0) at (0,1.5) {};
        \node[label, anchor=south] at (m0) {$m_0$};
     \node[shaded] (a) at (0,0.75) {};
        \node[label, anchor=west] at (a) {$b$};
     \node[shaded] (b) at (0,0) {};
        \node[label, anchor=west] at (b) {$a$};
      \draw[order] (m0) to (a);
      \draw[order] (a) to (b);
   \end{scope}
\end{tikzpicture}
\\\hline
\parbox[t]{4.5cm}{${}$\\\emph{Case 4:} $\P_0$ has height at\\ least $2$, $m_0 \notin S$, and $P_0\comp
\{m_0\}$\\ is order-connected.}
&
\begin{tikzpicture}[baseline=2cm]
   \clip (-1.5,-0.6) rectangle (5,2);
   \begin{scope}
      \node[anchor=west] at (-1.5,1.5) {$\P_0$};
      \node[unshaded] (m0) at (0,1.5) {};
         \node[label, anchor=south] at (m0) {$m_0$};
      \node[shaded] (a) at (0,0.75) {};
         \node[label, anchor=west] at (a) {$b$};
      \node[shaded] (b) at (0,0) {};
         \node[label, anchor=west] at (b) {$a$};
      \draw[densely dotted, min distance=100pt] (m0) to [out=-135,in=-45] (m0);
      \draw[order] (m0) to (a);
      \draw[order] (a) to (b);
   \end{scope}
   \begin{scope}[xshift=4cm]
      \node at (-1,1.5) {$\P_0'$};
     \node[unshaded] (m0) at (0,1.5) {};
        \node[label, anchor=south] at (m0) {$m_0$};
     \node[shaded] (a) at (0,0.75) {};
        \node[label, anchor=west] at (a) {$b$};
     \node[shaded] (b) at (0,0) {};
        \node[label, anchor=west] at (b) {$a$};
      \draw[order] (m0) to (a);
      \draw[order] (a) to (b);
   \end{scope}
\end{tikzpicture}
\\\hline
\parbox[t]{4.5cm}{${}$\\\emph{Case 5:} $\P_0$ has height at\\ least $2$, $m_0 \notin S$, and $P_0\comp
\{m_0\}$\\ is order-disconnected.}
&
\begin{tikzpicture}[baseline=2cm]
  \clip (-1.5,-0.65) rectangle (5,2);
  \begin{scope}
   \node[anchor=west] at (-1.5,1.5) {$\P_0$};
     \node[unshaded] (m0) at (0,1.5) {};
        \node[label, anchor=south] at (m0) {$m_0$};
     \node[shaded] (a) at (-0.5,0.75) {};
        \node[label, anchor=east] at (a) {$b$};
     \node[shaded] (b) at (-0.5,0) {};
        \node[label, anchor=east] at (b) {$a$};
     \node[shaded] (c) at (0.5,0.75) {};
        \node[label, anchor=west] at (c) {$c$};
     \draw[densely dotted, min distance=100pt] (m0) to [out=-25,in=-90] (m0);
     \draw[densely dotted, min distance=100pt] (m0) to [out=-155,in=-90] (m0);
      \draw[order] (m0) to (a);
      \draw[order] (m0) to (c);
      \draw[order] (a) to (b);
   \end{scope}
 \begin{scope}[xshift=4cm]
   \node at (-1,1.5) {$\P_0'$};
     \node[unshaded] (m0) at (0,1.5) {};
        \node[label, anchor=south] at (m0) {$m_0$};
     \node[shaded] (a) at (-0.5,0.75) {};
        \node[label, anchor=east] at (a) {$b$};
     \node[shaded] (b) at (-0.5,0) {};
        \node[label, anchor=east] at (b) {$a$};
     \node[shaded] (c) at (0.5,0.75) {};
        \node[label, anchor=west] at (c) {$c$};
      \draw[order] (m0) to (a);
      \draw[order] (m0) to (c);
      \draw[order] (a) to (b);
   \end{scope}
\end{tikzpicture}
\\\hline
\end{tabular}
\end{center}
\end{table}

Now define the proper substructure $\Y$ of $\X$ by $Y := P_0' \cup (X \comp P_0)$  and define $T:= Y \cap S$. Then $T$ is a generating set for~$\Y$. We shall prove that the two compatible relations $\homsetrel \X {S}$ and $\homsetrel \Y {T}$ on $\SB_m$ are equivalent.

\Claim{Claim 1} {$\homsetrel \Y {T}$ is conjunct-atomic definable from $\homsetrel \X {S}$.}%
We will use Lemma~\ref{lem:con2}. In each of the five cases in Table~\ref{tab:proof}, it is easy to find an order-preserving map $\rho_0 \colon \P_0 \to \P_0'$ such that $\rho_0 \rest {P_0'} = \id {P_0'}$ and $\rho_0^{-1}(m_0) = \{m_0\}$. We can then define $\rho \colon \X \twoheadrightarrow \Y$ by $\rho = \rho_0 \cup \id {X \comp P_0}$, with $\rho \rest {Y} = \id Y$ and $\rho(S) \subseteq T$. Hence $\homsetrel \Y {T}$ is conjunct-atomic definable from $\homsetrel \X {S}$, by Lemma~\ref{lem:con2}.

\Claim{Claim 2} {$\homsetrel \X {S}$ is conjunct-atomic definable from $\homsetrel \Y {T}$.}%
We will use Lemma~\ref{lem:con1}. Let $\varphi \colon S \to S_m$ such that $\varphi$ does not extend to a morphism from $\X$ to~$\ST_m$. We begin by checking that one of the following four conditions holds:
\begin{enumerate}[ (a)]  
\item  
$\varphi \rest {T}$ does not extend to a morphism from $\Y$
    to~$\ST_m$;
\item  
there are $x_1,x_2 \in P_0 \comp \{m_0\}$ with $x_1 \preccurlyeq x_2$
    but $\varphi(x_1) \not\preccurlyeq \varphi(x_2)$;
\item  
$m_0 \in S$ and there is $x \in P_0 \comp \{m_0\}$ with
    $\varphi(x) \not\preccurlyeq \varphi(m_0)$;
\item  
$m_0 \notin S$ and there are $x_1,x_2 \in P_0 \comp \{m_0\}$ such
    that $\varphi(x_1)$ and $\varphi(x_2)$ belong to different
    $\preccurlyeq$-components of~$\ST_m$.
\end{enumerate}

To see that one of these conditions holds, assume that (a) fails. Then $\varphi \rest {T}$ extends to a morphism $\psi \colon \Y \to \ST_m$. Note that $X \comp S \subseteq \Max X \subseteq Y$. So $X = Y \cup S$. Since $Y \cap S = T$, we can define the map $\chi \colon X \to S_m$ by $\chi = \psi \cup \varphi$.

First suppose that $\chi$ is $\preccurlyeq$-preserving. By Theorem~\ref{thm:topological}, both $\X$ and $\ST_m$ satisfy $x \preccurlyeq y \implies u(x) \approx u(y)$. Since $u(X) = \Max X \subseteq Y$ and $\psi \colon \Y \to \ST_m$ is a morphism, it follows that $\chi$ preserves~$u$, and therefore $\chi \colon \X \to \ST_m$ is a morphism. Since $\chi$ extends~$\phi$, it cannot be a morphism from $\X$ to~$\ST_m$. Thus we have shown that $\chi$ is not $\preccurlyeq$-preserving.

Since $X \comp P_0 \subseteq Y$ and $\psi \colon \Y \to \ST_m$ is a morphism, it now follows that $\chi\rest{P_0}$ is not $\preccurlyeq$-preserving.
We can assume that (b) fails, and hence $\varphi\rest{P_0\comp\{m_0\}}$ is $\preccurlyeq$-preserving. Consequently, if $m_0\in S$, then (c) must hold. Now assume that $m_0\notin S$. We have $m_0,a \in P_0' \subseteq Y$ with $\psi(a) \preccurlyeq \psi(m_0)$. Since $\ST_m$ satisfies~\ref{thm:topological}(6), we know that $\psi(m_0)$ is a maximal element of~$\ST_m$. By \ref{thm:topological}(4), each $\preccurlyeq$-component of~$\ST_m$ has a greatest element. So if $\varphi(P_0\comp\{m_0\})$ were contained in a single $\preccurlyeq$-component of~$\ST_m$, then as $\varphi(a) = \psi(a) \preccurlyeq \psi(m_0)$ and $a\in P_0\comp\{m_0\}$, we would have $\phi(P_0\comp\{m_0\}) \subseteq {\downarrow}\psi(m_0)$, in which case $\chi\rest{P_0}$ would be $\preccurlyeq$-preserving, a contradiction. Hence (d) holds.

We have shown that one of the conditions (a)--(d) holds. In each of these four cases, we will find a morphism  $\omega \colon \Y \to \X$ with $\omega(T) \subseteq S$ such that the map $\phi \circ \omega\rest {T} \colon T \to S_m$ does not extend to a morphism from $\Y$ to~$\ST_m$. It will then follow by Lemma~\ref{lem:con1} that $\homsetrel \X {S}$ is conjunct-atomic definable from $\homsetrel \Y {T}$, as required.

\Case{Case a} {$\varphi \rest {T}$ does not extend to a morphism from $\Y$ to $\ST_m$.}%
Take $\omega \colon \Y \to \X$ to
be the inclusion. Then $\omega(T) = T \subseteq S$ and the map $\phi \circ \omega\rest {T}  = \phi \rest {T}$ does not extend to a morphism from $\Y$ to~$\ST_m$, by assumption.

\Case{Case b} {there are $x_1,x_2 \in P_0 \comp \{m_0\}$ with $x_1 \preccurlyeq x_2$ but $\varphi(x_1) \not\preccurlyeq \varphi(x_2)$.}%
Only Cases 3, 4 and 5 from Table~\ref{tab:proof} can apply. So we can define the morphism $\omega \colon \Y \to \X$ by
\begin{equation*}
 \omega(y) =
 \begin{cases}
 x_2, &\text{if $y = b$,}\\
 x_1, &\text{if $y = a$,}\\
 y, &\text{otherwise.}
 \end{cases}
\end{equation*}
Then $\omega(T) \subseteq S$. The map $\phi \circ \omega \rest {T} \colon T \to S_m$ does not extend to a morphism from $\Y$ to~$\ST_m$, because $a \preccurlyeq b$ in $\Y$ but $\varphi \circ \omega (a) = \varphi(x_1) \not\preccurlyeq \varphi(x_2) = \varphi \circ \omega (b)$ in~$\ST_m$.

\Case{Case c} {$m_0 \in S$ and there is $x \in P_0 \comp \{m_0\}$ with $\varphi(x) \not\preccurlyeq \varphi(m_0)$.}%
Only Cases~1 and~3 from Table~\ref{tab:proof} can apply. Define the morphism $\omega_1 \colon \Y \to \X$ in Case~1 and the morphism $\omega_3 \colon \Y \to \X$ in Case~3 by
\begin{equation*}
 \omega_1(y) =
 \begin{cases}
 x, &\text{if $y = a$,}\\
 y, &\text{otherwise,}
 \end{cases}
\quad\text{and}\quad
 \omega_3(y) =
 \begin{cases}
 x, &\text{if $y \in \{a,b\}$,}\\
 y, &\text{otherwise.}
 \end{cases}
\end{equation*}
Then $\omega_i(T) \subseteq S$. The map $\phi \circ \omega_i \rest {T} \colon T \to S_m$ does not extend to a morphism from $\Y$ to~$\ST_m$, because $a \preccurlyeq m_0$ in $\Y$ but $\varphi \circ \omega_i (a) = \varphi(x) \not\preccurlyeq \varphi(m_0) = \varphi \circ \omega_i (m_0)$ in~$\ST_m$.

\Case{Case d} {$m_0 \notin S$ and there are $x_1,x_2 \in P_0 \comp \{m_0\}$ such that $\varphi(x_1)$ and $\varphi(x_2)$ belong to different $\preccurlyeq$-components of\/~$\ST_m$.}%
By Case~b, we can assume that $\varphi$ is $\preccurlyeq$-preserving on $P_0 \comp \{m_0\}$. So only Cases~2 and~5 from Table~\ref{tab:proof} can apply. Define $\omega_2 \colon \Y \to \X$ in Case~2 and $\omega_5 \colon \Y \to \X$ in Case~5 by
\begin{equation*}
 \omega_2(y) =
 \begin{cases}
 x_1, &\text{if $y = a$,}\\
 x_2, &\text{if $y = c$,}\\
 y, &\text{otherwise,}
 \end{cases}
\quad\text{and}\quad
 \omega_5(y) =
 \begin{cases}
 x_1, &\text{if $y \in \{a,b\}$,}\\
 x_2, &\text{if $y = c$,}\\
 y, &\text{otherwise.}
 \end{cases}
\end{equation*}
Then $\varphi \circ \omega_i \rest {T} \colon T \to S_m$ does not extend to a morphism from $\Y$ to~$\ST_m$, because $a$ and $c$ belong to the same $\preccurlyeq$-component $P_0'$ of~$\Y$, but $\varphi \circ \omega_i(a) = \varphi(x_1)$ and $\varphi \circ \omega_i(c) = \varphi(x_2)$ belong to different $\preccurlyeq$-components of~$\ST_m$.
\end{proof}

\begin{lemma}
For each odd\/~$m$, the Ockham algebra $\SB_m$ with dual space $\DT_m$ admits only finitely many relations.
\end{lemma}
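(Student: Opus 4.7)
The plan is to combine the strong duality from Theorem~\ref{thm:duality}, the reduction to directly indecomposable relations provided by Lemma~\ref{lem:indec}, and the structural analysis of Lemma~\ref{lem:P(m1)}. By Lemma~\ref{lem:indec}, it suffices to show that the directly indecomposable compatible relations on~$\SB_m$ fall into only finitely many equivalence classes.

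First, since $\ST_m$ yields a strong duality on $\Var{\SB_m}$, the interpolation condition~(IC) holds, and so Lemma~\ref{lem:compat} guarantees that every compatible relation on $\SB_m$ is equivalent to one of the form $\homsetrel{\X}{S}$ for some $\X \in \IScP{\ST_m}$ and generating set $S$ of $\X$. The next step is to reduce to $u$-connected structures: by Remark~\ref{rem:shape}, any such $\X$ decomposes as a disjoint union $\X_1 \dotcup \cdots \dotcup \X_\ell$ of its $u$-connected substructures. Morphisms from $\X$ to $\ST_m$ are determined componentwise, so setting $S_i := S \cap X_i$ and discarding any components with $S_i = \emptyset$ (which does not alter the relation) leaves a decomposition of $\homsetrel{\X}{S}$, up to a permutation of coordinates, as the cartesian product of the relations $\homsetrel{\X_i}{S_i}$. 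Directly indecomposable relations therefore arise, up to equivalence, from $u$-connected $\X$.

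The heart of the argument has already been carried out in Lemma~\ref{lem:P(m1)}: for $u$-connected~$\X$ with generating set $S$, the relation $\homsetrel{\X}{S}$ is equivalent to $\homsetrel{\Y}{S \cap Y}$ for some substructure $\Y$ in which every $\preccurlyeq$-connected component has one of the eight prescribed shapes. It then remains only to bound the number of such pairs $(\Y, S \cap Y)$ up to isomorphism. By Remark~\ref{rem:shape}, the number $k$ of maximal elements of~$\Y$ must divide~$m$, so $k \le m$; and each of the eight allowed component shapes has at most four elements, giving $\abs{Y} \le 4m$. Since the shading in the eight shapes determines which elements of $Y$ lie in the generating subset, only finitely many pairs $(\Y, S \cap Y)$ can arise. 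Consequently, the directly indecomposable compatible relations on $\SB_m$ fall into finitely many equivalence classes, as required. The real obstacle in this section is the technical case analysis already completed in Lemma~\ref{lem:P(m1)}; once that is in hand, the present lemma is essentially a bookkeeping exercise assembling the preceding ingredients.
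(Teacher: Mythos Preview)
Your argument follows the same route as the paper's: invoke (IC) via Theorem~\ref{thm:duality}, apply Lemma~\ref{lem:compat}, split into $u$-connected pieces via Remark~\ref{rem:shape}, and finish with Lemma~\ref{lem:P(m1)} and a finiteness count. Two small points deserve comment.

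First, the remark about ``discarding any components with $S_i = \emptyset$'' is a red herring: since $u$ preserves each $u$-connected component and $S$ generates~$\X$, every $S_i = S \cap X_i$ is automatically non-empty and generates~$\X_i$.

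Second, and more substantively, you pass from the factorisation $\homsetrel{\X}{S} \equiv \prod_i \homsetrel{\X_i}{S_i}$ directly to ``directly indecomposable relations arise from $u$-connected~$\X$''. This inference requires that each factor $\homsetrel{\X_i}{S_i}$ is \emph{non-trivial} (has at least two elements); otherwise a product with $\ell \ge 2$ factors could still be directly indecomposable, and such a relation need not be conjunct-atomic equivalent to the single non-trivial factor. The paper handles this by observing, via the substructure~$\Z_1$ of~$\ST_m$ from Lemma~\ref{lem:sepstructs}, that $\ST_m$ has at least two one-element subuniverses, so each~$\X_i$ admits at least two constant morphisms to~$\ST_m$ which differ on~$S_i$. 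You should insert this check; once it is in place your proof is complete and matches the paper's.
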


\begin{proof}
The alternating alter ego $\ST_m$ strongly dualises~$\SB_m$, by Theorem~\ref{thm:duality}, and so (IC) holds. Therefore, by Lemma~\ref{lem:compat}, every compatible relation on $\SB_m$ is equivalent to one of the form $\homsetrel \X {S}$, for some structure $\X \in \IScP {\ST_m}$ and some generating set $S$ for~$\X$.

By Remark~\ref{rem:shape}, such a structure $\X$ is the disjoint union of its $u$-connected substructures. Assume that $\X$ is not $u$-connected. We can write $\X = \X_1 \dotcup \X_2$, where $\X_1$ and $\X_2$ are non-empty substructures of~$\X$. Then $S_1 := X_1 \cap S$ is a generating set for~$\X_1$, and $S_2 := X_2 \cap S$ is a generating set for~$\X_2$. We have
\begin{align*}
\homsetrel \X {S} &=  \{\, \alpha \rest S \mid \alpha \colon \X_1 \dotcup \X_2 \to \ST_m \,\}\\
&\equiv \{\, \alpha_1 \rest {S_1} \mid \alpha_1 \colon \X_1 \to \ST_m \,\} \times \{\, \alpha_2 \rest {S_2} \mid \alpha_2 \colon \X_2 \to \ST_m \,\}\\
 &= \homsetrel {\X_1} {S_1} \times \homsetrel {\X_2} {S_2}.
\end{align*}
Since the structures $\X_1$ and $\X_2$ are non-empty, we can use the substructure $\Z_1$ of~$\ST_m$ given by Lemma~\ref{lem:sepstructs} to see that the relations $\homsetrel {\X_1} {S_1}$ and $\homsetrel {\X_2} {S_2}$ are non-trivial. So the relation $\homsetrel \X {S}$ is not directly indecomposable.

Using Lemma~\ref{lem:indec}, we now only need to find a finite upper bound on the number (up to equivalence) of relations $\homsetrel \X {S}$ such that $\X$ is a $u$-connected structure in $\IScP{\ST_m}$ and $S$ is a generating set for~$\X$. Each $u$-connected structure in $\IScP{\ST_m}$ has at most $m$ $\preccurlyeq$-connected components, by Remark~\ref{rem:shape}. So, by Lemma~\ref{lem:P(m1)}, we can use the upper bound $m \times 8^m$.
\end{proof}

It follows from the previous lemma with $m = 1$ that the $3$-element Stone algebra $\SB_1$ admits only finitely many relations. This was claimed without proof in~\cite{DP10}.

\section{Ockham algebras with infinitely many relations}\label{sec:inf}

In this section, we shall check that the eight finite Ockham algebras whose dual spaces are given in Figure~\ref{fig:infiniteOckham} each admit infinitely many relations. Using symmetry, there are only six algebras to consider. Two of these algebras are already known to admit infinitely many relations for general reasons.

\begin{lemma}[{\cite[3.4]{DP10}}]\label{lem:DP}\
\begin{enumerate}[ \normalfont(1)] 
\item
The dual of the Ockham space $\Y_1$ is the $4$-element Boolean algebra, which admits infinitely many relations because it is the square of a non-trivial algebra.
\item
The dual of the Ockham space $\Y_4$ is the Stone algebra on the $4$-element chain, which admits infinitely many relations because it has a pair of non-permuting congruences.
\end{enumerate}
\end{lemma}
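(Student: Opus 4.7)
The plan is to identify each of the Ockham algebras $K(\Y_1)$ and $K(\Y_4)$ explicitly, and then invoke the two general results from \cite[3.4]{DP10} that are embedded in the lemma statement — namely, that the square of a non-trivial finite algebra admits infinitely many relations, and that any finite algebra with a pair of non-permuting congruences does so.

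For part~(1), note that $\Y_1$ is the $2$-element antichain $\{0,1\}$ on which $g$ is the identity (both points are fixpoints). The Priestley dual of a $2$-element antichain is the $4$-element Boolean lattice, and substituting $g = \id{\Y_1}$ into $f(\alpha) = (\alpha \circ g)'$ gives $f(\alpha) = \alpha'$, i.e.\ Boolean complement. Hence $K(\Y_1) \cong \TwB \times \TwB$ is the square of the $2$-element Boolean algebra, and the first general result applies.

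For part~(2), the space $\Y_4$ is the $3$-element chain $0 < 1 < 2$ with $g$ the constant map of value~$2$ (and $2$ is a fixpoint). The lattice $\CP(\Y_4^\flat, \TwT)$ of order-preserving maps is the $4$-element chain; label its elements $\underline 0 < \alpha_1 < \alpha_2 < \underline 1$, where $\alpha_1$ and $\alpha_2$ are the characteristic maps of $\{2\}$ and $\{1,2\}$ respectively. Since $g$ is constantly~$2$, we have $f(\beta)(i) = \beta(2)'$ for all $i$, so $f(\underline 0) = \underline 1$ and $f(\alpha_1) = f(\alpha_2) = f(\underline 1) = \underline 0$. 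This is precisely the pseudocomplement on the $4$-chain, so $K(\Y_4)$ is the Stone algebra on the $4$-element chain.

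To finish~(2), I would exhibit a pair of non-permuting congruences on $K(\Y_4)$. Let $\theta_1$ collapse the block $\{\alpha_1,\alpha_2\}$ and $\theta_2$ collapse $\{\alpha_2,\underline 1\}$. Each is a lattice congruence on the $4$-chain, and both are compatible with $f$ because $f$ is constantly $\underline 0$ on the non-trivial block in each case. A direct check shows $(\alpha_1,\underline 1) \in \theta_1 \circ \theta_2$ (via $\alpha_2$) whereas $(\alpha_1,\underline 1) \notin \theta_2 \circ \theta_1$, so $\theta_1$ and $\theta_2$ do not permute. Applying the second general result from \cite[3.4]{DP10} completes the proof. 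The argument is essentially mechanical: the only slightly creative step is choosing the two non-permuting congruences, and the rest is just unwinding the duality from Section~\ref{sec:duality}.
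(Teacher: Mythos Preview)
Your proposal is correct. Note that the paper does not actually prove this lemma: it is simply quoted as a known result from \cite[3.4]{DP10}, and the identifications of $K(\Y_1)$ and $K(\Y_4)$ are asserted in the statement itself. Your write-up fills in exactly the details the paper leaves to the reader---the explicit computation of $K(\Y_1)$ and $K(\Y_4)$ via the restricted Priestley duality from Section~\ref{sec:duality}, and an explicit pair of non-permuting congruences on the $4$-element Stone chain---before invoking the two general facts from \cite[3.4]{DP10}. The computations and the congruence check are all correct.
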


So it remains to consider the Ockham algebras corresponding to $\Y_2$, $\Y_3$, $\Y_5$ and~$\Y_6$. We will be able to deal with these four algebras two at a time. To show that an algebra admits infinitely many relations we will adapt the following technique from~\cite{DP10}.

\begin{lemma}[{\cite[2.7]{DP10}}]\label{lem:inf}
Let\/ $\A$ be a finite algebra. To show that\/ $\A$ admits infinitely many relations, it suffices to find
\begin{itemize}
\item
an alter ego $\AT$ of\/~$\A$, and
\item
for each $n\ge 1$, a structure $\X_n \in \IScP\AT$ and a map\/ $\phi_n \colon X_n \to A$ that is not a morphism from\/ $\X_n$ to~$\AT$
\end{itemize}
such that the following holds, either for all\/ $k < \ell$ or for all\/ $k > \ell$:
\begin{itemize}
\item
for each morphism $\omega \colon \X_k \to \X_\ell$, the map $\phi_\ell \circ \omega \colon X_k \to A$ is a morphism from $\X_k$ to~$\AT$.
\end{itemize}
\end{lemma}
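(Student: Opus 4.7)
The plan is to associate, to each structure $\X_n$ from the hypothesis, the compatible relation
\[
r_n := \homsetrel{\X_n}{X_n} = \{\, \alpha \mid \alpha \colon \X_n \to \AT \text{ is a morphism}\,\} \subseteq A^{X_n},
\]
and to show that the $r_n$ are pairwise non-equivalent modulo conjunct-atomic interdefinability. Each $r_n$ is a genuine compatible relation on $\A$: since $\X_n \in \IScP\AT$, it embeds into some power $\AT^s$, and the restrictions of the coordinate projections provide morphisms $\X_n \to \AT$, so $r_n$ is a non-empty subuniverse of $\A^{X_n}$. Note also that $X_n$ itself is trivially a generating set for $\X_n$, so the $r_n$ are instances of the relations considered in Lemma~\ref{lem:compat}.

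The key step is then to use Lemma~\ref{lem:con1} to show that $r_k$ and $r_\ell$ are not equivalent whenever $k \ne \ell$. I would treat the case ``for all $k < \ell$'' (the case ``for all $k > \ell$'' being identical after swapping roles). Fix $k < \ell$. To prove that $r_\ell$ is not conjunct-atomic definable from $r_k$, I would invoke Lemma~\ref{lem:con1} with $\X = \X_\ell$, $S = X_\ell$, $\Y = \X_k$, $T = X_k$; the hypothesis ``$T$ is a generating set for $\Y$'' is trivial since $T = Y$. The required witness is $\phi_\ell$ itself: by assumption $\phi_\ell \colon X_\ell \to A$ is not a morphism from $\X_\ell$ to $\AT$, and since $S = X_\ell$ the phrase ``extends to a morphism'' simply means ``is a morphism''. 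Moreover, the hypothesis of the lemma says that for every morphism $\omega \colon \X_k \to \X_\ell$, the composite $\phi_\ell \circ \omega \colon X_k \to A$ \emph{is} a morphism from $\X_k$ to $\AT$, hence trivially ``extends'' to one. Thus the ``witness $\omega$'' clause in the iff of Lemma~\ref{lem:con1} fails, so $r_\ell$ is not conjunct-atomic definable from $r_k$; in particular $r_k$ and $r_\ell$ are not equivalent.

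The conclusion is immediate: the set $\{r_n \mid n \ge 1\}$ contains infinitely many pairwise non-equivalent compatible relations on $\A$, so the set of all compatible relations on $\A$ has infinitely many equivalence classes modulo conjunct-atomic interdefinability. By definition, $\A$ admits infinitely many relations. No conceptual obstacle arises in the argument: the hypothesis has been engineered precisely so that $\phi_\ell$ serves as the obstruction demanded by Lemma~\ref{lem:con1}, and the only care needed is to align the direction of the morphisms $\X_k \to \X_\ell$ in the hypothesis with the $\Y \to \X$ pattern appearing in Lemma~\ref{lem:con1}.
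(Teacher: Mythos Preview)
Your proposal is correct and follows exactly the approach sketched in the paper: the paper states (without a full proof, citing \cite{DP10}) that one defines $r_n := \homsetrel{\X_n}{X_n}$ and that the hypotheses ensure these are pairwise non-equivalent, and your argument fills in precisely those details via Lemma~\ref{lem:con1}. The alignment of $\X = \X_\ell$, $\Y = \X_k$ with the direction of $\omega$ in Lemma~\ref{lem:con1} is handled correctly.
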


In the proof of this lemma, the sequence of structures $\X_1,\X_2,\X_3,\dotsc$ is used to define a sequence of compatible relations $r_1, r_2, r_3, \dotsc$
on~$\A$, where $r_n := \homsetrel {\X_n}{X_n}$. The assumptions of the lemma are set up to ensure that these relations are pairwise non-equivalent.

We start with the Ockham space~$\Y_3$, which is dual to the $3$-element Kleene algebra $\KB = \langle \{0,a,1\}; \vee, \wedge, f, 0, 1\rangle$ shown in Figure~\ref{fig:K}. Define the enriched ordered set $\KT := \langle \{0,a,1\}; \preccurlyeq, K_0\rangle$ shown in Figure~\ref{fig:K}, where $K_0 = \{0,1\}$. It is easy to check that $\KT$ is an alter ego of\/~$\KB$. (In fact, the two relations $\preccurlyeq$ and $K_0$ determine the clone of~$\KB$; see~\cite[4.3.12]{CD98}.) Rather than applying Lemma~\ref{lem:inf} directly to $\KB$ and~$\KT$, we prove a more general result that will also cover the Ockham algebra with dual space~$\Y_2$.

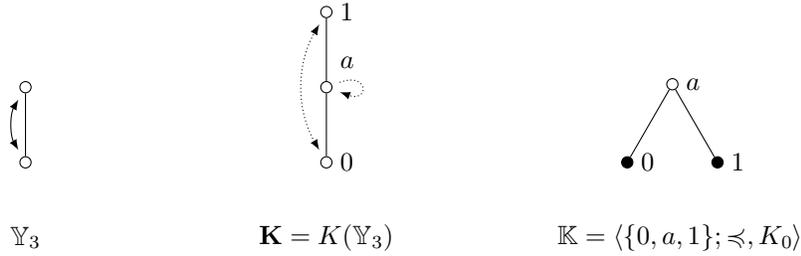
\begin{figure}[t]
\begin{tikzpicture}
   \begin{scope}
     \node at (0,-1) {$\Y_3$};
     \node[unshaded] (0) at (0,0) {};
     \node[unshaded] (1) at (0,1) {};
     \draw[order] (0) to (1);
     \draw[curvy, <->] (0) to [bend left] (1);
     \end{scope}
   \begin{scope}[xshift=4cm]
      \node at (0,-1) {$\KB = K(\Y_3)$};
      \node[unshaded] (0) at (0,0) {};
         \node[label, anchor=west] at (0) {$0$};
      \node[unshaded] (a) at (0,1) {};
         \node[label, anchor=south west] at (a) {$a$};
      \node[unshaded] (1) at (0,2) {};
         \node[label, anchor=west] at (1) {$1$};
      \draw[order] (0) to (a);
      \draw[order] (a) to (1);
      \draw[curvy, <->, densely dotted] (0) to [bend left](1);
      \draw[loopy, densely dotted] (a) to [out=20,in=-20] (a);
         \end{scope}
   \begin{scope}[xshift=8cm]
      \node at (0.7,-1) {$\KT = \langle \{0,a,1\}; \preccurlyeq, K_0\rangle$};
      \node[shaded] (0) at (0,0) {};
         \node[label, anchor=west] at (0) {$0$};
      \node[unshaded] (a) at ($(0)+(60:1.2)$) {};
         \node[label, anchor=west] at (a) {$a$};
      \node[shaded] (1) at ($(a)+(-60:1.2)$) {};
         \node[label, anchor=west] at (1) {$1$};
      \draw[order] (0) to (a);
      \draw[order] (1) to (a);
    \end{scope}
\end{tikzpicture}
\caption{The $3$-element Kleene algebra $\KB$}\label{fig:K}
\end{figure}

\begin{figure}[t]
\begin{tikzpicture}
   \begin{scope}
      \node at (-1,0) {$\BT$};
      \node[shaded] (0) at (0,0) {};
         \node[label, anchor=west] at (0) {$0$};
      \node[unshaded] (a) at ($(0)+(60:1.2)$) {};
         \node[label, anchor=west] at (a) {$a$};
      \node[unshaded] (1) at ($(a)+(-60:1.2)$) {};
         \node[label, anchor=west] at (1) {$1$};
      \draw[order] (0) to (a);
      \draw[order] (1) to (a);
 \end{scope}
 \begin{scope}[xshift=5cm]
   \node at (-1,0) {$\X_n$};
      \node[shaded] (0) at (0,0) {};
         \node[label, anchor=north] at (0) {$0$};
      \node[unshaded] (1) at (0,1) {};
         \node[label, anchor=south] at (1) {$1$};
      \node[unshaded] (2) at (1,0) {};
         \node[label, anchor=north] at (2) {$2$};
      \node[unshaded] (3) at (1,1) {};
         \node[label, anchor=south] at (3) {$3$};
       \node[unshaded] (4) at (2,0) {};
         \node[label, anchor=north] at (4) {$4$};
      \node[unshaded] (5) at (2,1) {};
         \node[label, anchor=south] at (5) {$5$};
      \coordinate (6) at (3,0) {};
      \node at (3,0.5) {$\cdots$};
      \coordinate (2n-3) at (3,1) {};
      \node[unshaded] (2n-2) at (4,0) {};
         \node[label, anchor=north] at (2n-2) {$2n-2$};
      \node[unshaded] (2n-1) at (4,1) {};
         \node[label, anchor=south] at (2n-1) {$2n-1$};
      \draw[order] (0) to (1);
       \draw[order] (0) to (2n-1);
      \draw[order] (2) to (1);
      \draw[order] (2) to (3);
      \draw[order] (4) to (3);
      \draw[order] (4) to (5);
      \draw[order, shorten >=0.7cm] (5) to (6);
      \draw[order, shorten <=0.7cm] (2n-3) to (2n-2);
      \draw[order] (2n-2) to (2n-1);
 \end{scope}
\end{tikzpicture}
\caption{Structures for Lemma~\ref{lem:gen1}}\label{fig:gen1}
\end{figure}

\begin{lemma}\label{lem:gen1}
Let\/ $\A$ be a finite algebra, let\/ $\AT = \langle A; r, s\rangle$ be an alter ego of\/~$\A$, where $r \subseteq A^2$ and $s \subseteq A$, and let\/ $\BT = \langle \{0,a,1\}; \le, \{0\} \rangle$ be the enriched ordered set shown in Figure~\ref{fig:gen1}. If\/ $\IScP \AT$ contains the structure\/ $\BT$, then\/ $\A$ admits infinitely many relations.

\end{lemma}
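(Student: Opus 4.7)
The plan is to apply Lemma~\ref{lem:inf} with the family of structures $\X_n$ from Figure~\ref{fig:gen1} and carefully chosen maps $\phi_n \colon X_n \to A$, verifying the condition ``for all $k < \ell$''.

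First I would check that $\X_n \in \IScP{\AT}$ for every $n \ge 1$ by exhibiting embeddings $\X_n \hookrightarrow \BT^{k_n}$ into finite powers of $\BT$ (possible since $\BT \in \IScP{\AT}$ and the class is closed under substructures and products). For $n \ge 3$, one embedding into $\BT^n$ sets $\iota(0)$ equal to the all-zero tuple, places $1$'s in two cyclically consecutive coordinates of each non-shaded bottom $2i$, and decorates the tops with $a$'s at the matching coordinates (together with a single $1$ at the shared middle coordinate for middle tops that must be incomparable with $0$). The small cases $n = 1, 2$ admit ad hoc embeddings into $\BT$ and $\BT^3$.

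Next, I would analyse morphisms $\omega \colon \X_k \to \X_\ell$. Because $0$ is the unique element of any $\X_n$ lying in $s$, every such morphism sends $0_k \mapsto 0_\ell$. Preservation of $\le$ makes $\omega$ a graph homomorphism of the comparability graphs, which are the cycles of lengths $2k$ and $2\ell$; such homomorphisms cannot increase distance, so $d_{\X_\ell}(\omega(x), 0_\ell) \le d_{\X_k}(x, 0_k) \le k$ for every $x \in X_k$. The element $\ell \in X_\ell$ lies at cycle-distance exactly $\ell$ from $0_\ell$, so when $k < \ell$ it cannot lie in $\omega(X_k)$; hence $\omega(X_k) \subseteq X_\ell \setminus \{\ell\}$.

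For the non-morphism $\phi_\ell$, fix an embedding $\iota \colon \BT \hookrightarrow \AT^m$; since $0$ and $1$ are incomparable in $\BT$, there is a coordinate $j$ for which the appropriate ordered pair $(\iota(1)_j, \iota(0)_j)$ or $(\iota(0)_j, \iota(1)_j)$ (according to the parity of $\ell$) fails to lie in $r$. Setting $c_0 := \iota(0)_j$, $c_1 := \iota(1)_j$ and $\phi_\ell(x) := c_0$ for all $x \ne \ell$ with $\phi_\ell(\ell) := c_1$, the map $\phi_\ell$ agrees with the constant-$c_0$ morphism on $X_\ell \setminus \{\ell\}$ (inheriting $c_0 \in s$ and $(c_0, c_0) \in r$ from $\BT$), yet fails $r$-preservation on a cover incident to $\ell$ by the choice of $c_1$. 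For $k < \ell$ and any morphism $\omega \colon \X_k \to \X_\ell$, the previous step forces $\omega(X_k)$ to avoid $\ell$, so $\phi_\ell \circ \omega$ coincides with the constant-$c_0$ morphism restricted to $X_k$ and is itself a morphism. Lemma~\ref{lem:inf} then gives that $\A$ admits infinitely many relations.

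The main obstacle I expect is Step~1: crafting the embeddings $\X_n \hookrightarrow \BT^{k_n}$ so that the middle tops (which cover two non-shaded bottoms yet must be incomparable with the shaded bottom $0$) fit correctly. A naive per-coordinate scheme where each bottom gets its own unique $1$-coordinate forces the middle tops to lie above $\iota(0)$, contradicting the required incomparability; the fix is to let adjacent bottoms share a coordinate so that the middle top can carry a $1$ there. Once the embedding works, the morphism analysis in Step~2 and the constant-map construction in Step~3 follow a clean template, with the non-comparability of $0$ and $1$ in $\BT$ supplying exactly the single perturbation needed to destroy $r$-preservation at the far element $\ell$.
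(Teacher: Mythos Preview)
Your plan is correct and matches the paper's almost exactly: apply Lemma~\ref{lem:inf} to the crowns $\X_n$, pin $\omega(0_k)=0_\ell$ via the unary relation, use the cycle-distance argument to show $\ell \notin \omega(X_k)$ when $k<\ell$, and choose $\phi_\ell$ to be a constant morphism perturbed only at the antipodal vertex~$\ell$.

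Two places where the paper's execution is tidier than yours. For Step~1 the paper does not build explicit embeddings $\X_n \hookrightarrow \BT^{k_n}$ at all; it verifies $\X_n \in \IScP\BT$ by writing down separating morphisms $\X_n \to \BT$. For each $x\ne 0$ the map sending ${\uparrow}x$ to $a$ and the rest to $0$ separates the order when the source is non-zero; a single map $\beta$ with $\beta(0)=0$, $\beta(1)=\beta(2n{-}1)=a$, $\beta=1$ elsewhere handles the case $0\nle y$; and one further map with $0\mapsto 0$, else $\mapsto a$, separates the unary relation. This sidesteps entirely the ``middle tops'' obstacle you flagged. For Step~3, the paper first defines $\psi_\ell\colon X_\ell\to B$ as constant $0$ except $\psi_\ell(\ell)=1$, and notes that this fails to be a morphism into $\BT$ \emph{regardless} of whether $\ell$ is a top or a bottom of the crown, since $0$ and $1$ are incomparable in both directions; then, because $\BT\in\IScP\AT$, some projection $\rho_\ell\colon\BT\to\AT$ makes $\phi_\ell:=\rho_\ell\circ\psi_\ell$ a non-morphism into~$\AT$. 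Factoring through $\BT$ in this way removes your parity case-split and the need to isolate a specific coordinate of the embedding.
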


\begin{proof}
Assume that\/ $\IScP \AT$ contains the structure $\BT$.
We shall use Lemma~\ref{lem:inf} to show that $\A$ admits infinitely many relations. Let $n \ge 2$ and define the structure $\X_n = \langle \{0,1,\dots, 2n-1\}; \le, s\rangle$ as in Figure~\ref{fig:gen1}: the ordered set $\langle X_n; \le \rangle$ is the $2n$-element crown and $s = \{0\}$. Now define the map $\psi_n \colon X_n \to B$ by
\begin{equation*}
 \psi_n(i) =
 \begin{cases}
 1, &\text{if $i=n$,}\\
 0, &\text{otherwise.}
 \end{cases}
\end{equation*}
Then $\psi_n$ is not a morphism from $\X_n$ to~$\BT$, as $n$ and $n+1$ are comparable in $\X_n$, but $\psi_n(n) = 1$ and $\psi_n(n+1)= 0$ are not comparable in~$\BT$. Since $\BT \in \IScP \AT$ by assumption, there must be a morphism $\rho_n \colon \BT \to \AT$ such that $\phi_n := \rho_n \circ \psi_n$ is not a morphism from $\X_n$ to~$\AT$.

Using Lemma~\ref{lem:inf}, the following two claims establish that $\A$ admits infinitely many relations.

\Claim{Claim 1} {$\X_n \in \IScP\AT$, for all $n \ge 2$.}%
Since $\BT \in \IScP \AT$, it is enough to show that $\X_n \in \IScP \BT$. First, let $x,y \in X_n$ with $x \nle y$ in $\X_n$. If $x \ne 0$, then we can define the morphism $\alpha_x \colon \X_n \to \BT$ by
\begin{equation*}
 \alpha_x(z) =
 \begin{cases}
 a, &\text{if $z \in {\uparrow}x$,}\\
 0, &\text{otherwise,}
 \end{cases}
\end{equation*}
and we have $\alpha_x(x) = a \nle 0 = \alpha_x(y)$ in $\BT$. If $x = 0$, then we can define the morphism $\beta \colon \X_n \to \BT$ by
\begin{equation*}
 \beta(z) =
 \begin{cases}
 0, &\text{if $z=0$,}\\
 a, &\text{if $z=1$ or $z=2n-1$,}\\
 1, &\text{otherwise,}
 \end{cases}
\end{equation*}
and we have $\beta(x) = 0 \nle 1 = \beta(y)$ in $\BT$. Thus the order $\le$ is separated by morphisms from $\X_n$ to~$\BT$, and it follows that the elements of $X_n$ are also separated. Finally, define the morphism $\gamma \colon \X_n \to \BT$ by
\begin{equation*}
 \gamma(z) =
 \begin{cases}
 0, &\text{if $z=0$,}\\
 a, &\text{otherwise.}
 \end{cases}
\end{equation*}
Then $\gamma(X_n \comp \{0\}) \subseteq B \comp \{0\}$, and so $\gamma$ separates the unary relation~$s$. Hence we have shown that $\X_n \in \IScP \BT$.

\Claim{Claim 2} {Let\/ $\omega \colon \X_k \to \X_\ell$, where $2 \le k < \ell$. Then $\phi_\ell \circ \omega$ is a morphism from $\X_k$ to~$\AT$.}%
Since $\phi_\ell := \rho_\ell \circ \psi_\ell$, it suffices to show that $\psi_\ell \circ \omega$ is a morphism from $\X_k$ to~$\BT$.

For any connected ordered set~$\P$, there is a natural distance function $d$ on~$\P$, where $d(a,b)$ is the length of the shortest fence in $\P$ between $a$ and~$b$. We will use the distance functions $d_k$ and $d_\ell$ on $\X_k$ and $\X_\ell$.

We first show that $\omega(X_k) \subseteq X_\ell \comp \{\ell\}$. Let $x \in X_k$. The $2k$-crown $\X_k$ has diameter~$k$, and so $d_k(0, x) \le k$. Note that $\omega(0)=0$, as $\omega$ preserves~$s$. Since $\omega$ is order-preserving, we have
\[
d_\ell(0, \omega(x)) = d_\ell(\omega(0), \omega(x)) \le d_k(0, x) \le k.
\]
But $d_\ell(0, \ell) = \ell > k$, and therefore $\omega(x) \in  X_\ell \comp \{\ell\}$.

The restriction of the map $\psi_\ell$ to $X_\ell \comp \{\ell\}$ is constant~$0$, and so preserves both $\le$ and~$s$. 
Since $\omega(X_k) \subseteq X_\ell \comp \{\ell\}$, it now follows that $\psi_\ell \circ \omega \colon \X_k \to \BT$ is a morphism.
\end{proof}

\begin{lemma}\label{lem:K}
The $3$-element Kleene algebra~$\KB$ \textup(which is the Ockham algebra with dual space~$\Y_3$\textup) admits infinitely many relations.
\end{lemma}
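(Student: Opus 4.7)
The plan is to invoke Lemma~\ref{lem:gen1} with $\A = \KB$ and $\AT = \KT$, so $r = {\preccurlyeq}$ and $s = K_0 = \{0,1\}$. Since the paper has already observed that $\KT$ is an alter ego of $\KB$, the only real work is to verify the hypothesis of Lemma~\ref{lem:gen1}, namely that the enriched ordered set $\BT = \langle \{0,a,1\}; \le, \{0\}\rangle$ lies in $\IScP{\KT}$. The two structures $\BT$ and $\KT$ have the same underlying order (the ``V'' with apex~$a$), so the problem is purely about the unary part: in $\KT$ both bottom elements lie in the distinguished subset, whereas in $\BT$ only one does. I therefore need to exhibit an embedding of $\BT$ into some finite power $\KT^n$ that witnesses this asymmetry.

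The construction I would use takes $n = 3$ and sends
\[
0 \mapsto (0,1,1), \quad a \mapsto (a,a,a), \quad 1 \mapsto (1,0,a).
\]
The first step is to check the order: both $(0,1,1)$ and $(1,0,a)$ are $\preccurlyeq (a,a,a)$ coordinatewise, while the two bottom images are $\preccurlyeq$-incomparable (the first coordinate already witnesses this, since $0$ and $1$ are $\preccurlyeq$-incomparable in $\KT$). The second step is the key point: the image of $0$ lies in $K_0^3 = \{0,1\}^3$, but the images of $a$ and of $1$ do not, because each contains a coordinate equal to~$a$. Hence the induced substructure of $\KT^3$ on these three points is isomorphic to $\BT$, so $\BT \in \IScP{\KT}$ as required.

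With the hypothesis of Lemma~\ref{lem:gen1} verified, the conclusion is immediate: $\KB$ admits infinitely many relations. The only real obstacle is spotting the embedding, and the main idea is that multiplying by a ``separating'' coordinate in which the element~$1$ is sent to $a$ is enough to eject $1$ from the unary relation while preserving the order structure below~$a$; this is precisely what a single additional third coordinate achieves.
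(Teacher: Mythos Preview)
Your proof is correct and follows the same strategy as the paper: apply Lemma~\ref{lem:gen1} with the alter ego $\KT$ and exhibit $\BT$ as a substructure of a power of~$\KT$. The paper uses the slightly more economical embedding into $\KT^2$ given by $0 \mapsto (0,0)$, $a \mapsto (a,a)$, $1 \mapsto (1,a)$, but your three-coordinate embedding works just as well.
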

\begin{proof}
This follows from Lemma~\ref{lem:gen1} using the alter ego $\KT = \langle \{0,a,1\}; \preccurlyeq, K_0\rangle$ of $\KB$ shown in Figure~\ref{fig:K}. The structure $\BT$ from Figure~\ref{fig:gen1} embeds into $\KT^2$ via $0 \mapsto (0,0)$, $a \mapsto (a,a)$, $1 \mapsto (1,a)$.
\end{proof}

\begin{lemma}\label{lem:a2}
The Ockham algebra with dual space $\Y_2$ admits infinitely many relations.
\end{lemma}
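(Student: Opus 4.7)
The plan is to apply the general technique of Lemma~\ref{lem:gen1} to the Ockham algebra $\A := K(\Y_2)$, using as alter ego the structure $\AT := \langle A; \preccurlyeq, s \rangle$, where $\preccurlyeq$ is the alternating order supplied by Theorem~\ref{thm:duality} and $s := \{0,1\}$ is the two-element subalgebra of constants. First I would verify the hypotheses of Theorem~\ref{thm:duality}: the Ockham space $\Y_2$ is generated by the element $0$ (since $g(0)=1$), and $g$ is order-preserving because the order on $\Y_2$ is trivial. Thus $\preccurlyeq$ is a compatible binary relation on~$\A$. Compatibility of $s$ is immediate: $s$ contains the two nullary constants, is closed under $\vee$ and~$\wedge$, and is closed under $f$ since $f(00)=11$ and $f(11)=00$.

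Next I would make the alternating order explicit. Writing each element $\alpha$ of $A = \CP(\Y_2^\flat,\TwT)$ as the string $\alpha(0)\alpha(1)$, we have $A = \{00,01,10,11\}$. Because $g^k(0) = 1$ for every $k \ge 1$, the condition defining $\preccurlyeq$ collapses to $\alpha(0) \le \beta(0)$ together with $\alpha(1) = \beta(1)$. So $\preccurlyeq$ is the disjoint union of the two chains $00 \preccurlyeq 10$ and $01 \preccurlyeq 11$ (plus reflexivity).

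The heart of the argument is to exhibit an embedding of $\BT$ from Figure~\ref{fig:gen1} into $\AT^2$. My candidate is
\[
  0 \mapsto (00,11), \qquad a \mapsto (10,11), \qquad 1 \mapsto (10,01).
\]
Direct verification gives $(00,11) \preccurlyeq^2 (10,11)$ and $(10,01) \preccurlyeq^2 (10,11)$, while $(00,11)$ and $(10,01)$ are $\preccurlyeq^2$-incomparable (the first would need $11 \preccurlyeq 01$ and the second would need $10 \preccurlyeq 00$). For the unary relation, $s \times s = \{00,11\}^2$ meets the image in exactly $\{(00,11)\}$, matching $s_{\BT} = \{0\}$. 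So the image is a substructure of $\AT^2$ isomorphic to~$\BT$, giving $\BT \in \IScP{\AT}$. Lemma~\ref{lem:gen1} then delivers the conclusion.

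The main obstacle is the embedding step: because the alternating order on $\A$ consists of two disjoint chains rather than forming a diamond, one cannot realise the $\BT$-shape inside $\AT$ itself. The second coordinate has to do double duty, manufacturing the required incomparability of the two lower elements of $\BT$ while simultaneously pushing the image of $a$ out of the unary relation (since the smallest available unary subuniverse $\{00,11\}$ is not a singleton, some coordinate of $a$ must land on $10$ or $01$). Finding one of the two coordinates along which to break things correctly is the only real computation; everything else is routine.
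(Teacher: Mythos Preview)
Your proof is correct and essentially identical to the paper's: with the identification $0=00$, $a=10$, $b=01$, $1=11$, your alternating order coincides with the paper's~$\preccurlyeq$ on~$\AT_2$, your unary relation $s=\{00,11\}$ is the paper's $\{0,1\}$, and your embedding $0\mapsto(00,11)$, $a\mapsto(10,11)$, $1\mapsto(10,01)$ is exactly the paper's $0\mapsto(0,1)$, $a\mapsto(a,1)$, $1\mapsto(a,b)$. The only difference is that you justify compatibility of $\preccurlyeq$ by appeal to Theorem~\ref{thm:duality}, whereas the paper simply asserts that $\AT_2$ is an alter ego.
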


\begin{proof}
The dual of $\Y_2$ is the Ockham algebra $\A_2 = \langle \{0, a, b, 1\}; \vee, \wedge, f, 0, 1 \rangle$ shown in Figure~\ref{fig:dualY2}. Define the enriched ordered set $\AT_2 = \langle \{0,a,b,1\}; \preccurlyeq, s \rangle$ as in Figure~\ref{fig:dualY2}, where $s = \{0,1\}$. Then $\AT_2$ is an alter ego of~$\A_2$. The structure $\BT$ from Lemma~\ref{lem:gen1} embeds into $(\AT_2)^2$ via $0 \mapsto (0,1)$, $a \mapsto (a,1)$, $1 \mapsto (a,b)$. So $\A_2$ admits infinitely many relations.
\end{proof}

\begin{figure}[ht]
\begin{tikzpicture}
   \begin{scope}
      \node at (0.5,-1) {$\Y_2$};
      \node[unshaded] (0) at (0,0) {};
      \node[unshaded] (1) at (1,0) {};
      \draw[curvy] (0) to [bend left] (1);
      \draw[loopy] (1) to [out=110,in=70] (1);
   \end{scope}
   \begin{scope}[xshift=4cm]
   \node at (0,-1) {$\A_2 = K(\Y_2)$};
      \node[unshaded] (0) at (0,0) {};
         \node[label, anchor=north] at (0) {$0$};
      \node[unshaded] (a) at ($(0) + (135:1)$) {};
         \node[label, anchor=east] at (a) {$a$};
      \node[unshaded] (b) at ($(0) + (45:1)$) {};
         \node[label, anchor=west] at (b) {$b$};
      \node[unshaded] (1) at ($(a) + (45:1)$) {};
         \node[label, anchor=south] at (1) {$1$};
      \draw[order] (0) to (a);
      \draw[order] (0) to (b);
      \draw[order] (a) to (1);
      \draw[order] (b) to (1);
      \draw[straight, <->, densely dotted] (0) to (1);
      \draw[curvy, densely dotted] (a) to [bend left] (1);
      \draw[curvy, densely dotted] (b) to [bend left] (0);
         \end{scope}
   \begin{scope}[xshift=8cm]
      \node at (0.6,-1) {$\AT_2 = \langle A_2; \preccurlyeq, \{0,1\}\rangle$};
       \node[shaded] (0) at (0,0) {};
         \node[label, anchor=west] at (0) {$0$};
      \node[unshaded] (a) at (0,1) {};
         \node[label, anchor=west] at (a) {$a$};
     \node[unshaded] (b) at (1,0) {};
         \node[label, anchor=west] at (b) {$b$};
     \node[shaded] (1) at (1,1) {};
         \node[label, anchor=west] at (1) {$1$};
      \draw[order] (a) to (0);
      \draw[order] (1) to (b);
    \end{scope}
\end{tikzpicture}
\caption{The Ockham algebra with dual space $\Y_2$}\label{fig:dualY2}
\end{figure}
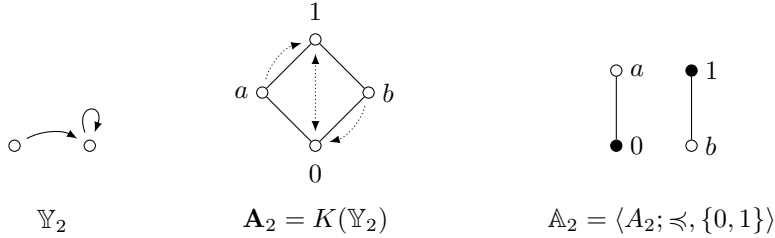

Our second general lemma will cover the remaining two Ockham algebras $K(\Y_5)$ and $K(\Y_6)$.

\begin{figure}[t]
\begin{tikzpicture}
   \begin{scope}
      \node at (-1,0) {$\BT$};
       \node[unshaded] (0) at (0,0) {};
         \node[label, anchor=west] at (0) {$0$};
      \node[unshaded] (a) at (0,1.1) {};
         \node[label, anchor=west] at (a) {$a$};
     \node[unshaded] (1) at (1.1,0) {};
         \node[label, anchor=west] at (1) {$1$};
     \node[unshaded] (b) at (1.1,1.1) {};
         \node[label, anchor=west] at (b) {$b$};
      \draw[order] (a) to (0);
      \draw[order] (1) to (b);
  \begin{pgfonlayer}{background}
    \node[ellipse,draw,dashed,fit=(0)(a),inner xsep=10pt,inner ysep=2pt] {};
    \node[circle,draw,dashed,fit=(b),inner sep=7pt] {};
    \node[circle,draw,dashed,fit=(1),inner sep=7pt] {};
  \end{pgfonlayer}
 \end{scope}
 \begin{scope}[xshift=5cm, yshift=0.25cm]
    \node at (-1.25,-0.25) {$\X_n$};
    \node at (5,-0.25) {$L_n$};
    \node at (5,1.5) {$U_n$};
      \node[unshaded] (xn) at (0,-0.25) {};
         \node[label, anchor=west] at (xn) {$0$};
      \node[unshaded] (1) at (0.5,1.75) {};
         \node[label] at (0.5,2.05) {$1$};
      \node[unshaded] (2) at (1,1) {};
         \node[label] at (1,0.7) {$2$}; 
      \node[unshaded] (3) at (1.5,1.75) {};
         \node[label] at (1.5,2.05) {$3$}; 
      \coordinate (4) at (2,1);
       \node at (2.25,1.375) {$\cdots$};
      \coordinate (2n-3) at (2.5, 1.75);
      \node[unshaded] (2n-2) at (3,1) {};
         \node[label] at (3,0.7) {$2n-2$}; 
      \node[unshaded] (2n-1) at (3.5,1.75) {}; 
         \node[label] at (3.5,2.05) {$2n-1$}; 
      \node[unshaded] (yn) at (4,-0.25) {};
         \node[label, anchor=east] at (yn) {$2n$};
       \draw[order] (xn) to (1);
       \draw[order] (2) to (1);
       \draw[order] (2) to (3);
       \draw[order, shorten >=15pt] (3) to (4);
       \draw[order, shorten <=15pt] (2n-3) to (2n-2);
       \draw[order] (2n-2) to (2n-1);
       \draw[order] (yn) to (2n-1);
   \begin{pgfonlayer}{background}
    \node[rounded rectangle,draw,dashed,fit=(1)(2)(2n-1),inner xsep=25pt,inner ysep=15pt] {};
    \node[rounded rectangle,draw,dashed,fit=(xn)(yn),inner xsep=14pt,inner ysep=10pt] {};
  \end{pgfonlayer}
 \end{scope}
\end{tikzpicture}
\caption{Structures for Lemma~\ref{lem:gen2}}\label{fig:gen2}
\end{figure}
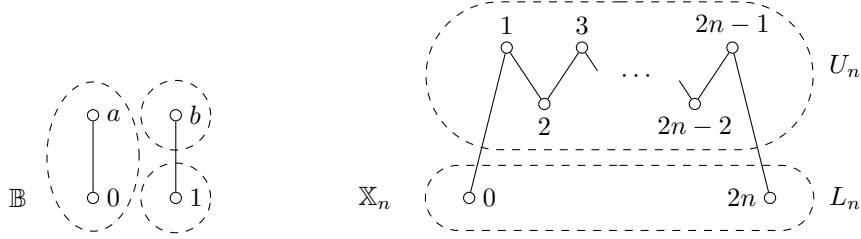

\begin{lemma}\label{lem:gen2}
Let\/ $\A$ be a finite algebra and let\/ $\AT = \langle A; r, s\rangle$ be an alter ego of\/~$\A$, where $r, s \subseteq A^2$. Let $\BT = \langle \{0,a,1\}; \leq, \trianglelefteq \rangle$ be the structure shown in Figure~\ref{fig:gen2}, where $\leq$ is the order and $\trianglelefteq$ is the quasi-order given by
\[
{\leq} = \Delta_B \cup \{(0,a), (1,b)\} \quad\text{and}\quad {\trianglelefteq} = {\leq} \cup \{(a,0)\}.
\]
If\/ $\IScP \AT$ contains the structure $\BT$, then $\A$ admits infinitely many relations.
\end{lemma}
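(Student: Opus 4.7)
The plan is to apply Lemma~\ref{lem:inf}, following the template of Lemma~\ref{lem:gen1}. For each $n \ge 2$, take $\X_n$ to be the structure shown in Figure~\ref{fig:gen2}: a $(2n-1)$-element fence $U_n = \{1,\dotsc,2n-1\}$ with two bottom elements $L_n = \{0,2n\}$ attached beneath its endpoints, and with $\trianglelefteq_n$ identifying $L_n$ into a single equivalence class. Define $\psi_n \colon X_n \to B$ by $\psi_n(0) = 0$, $\psi_n(2n) = 1$, sending the first half of $U_n$ to $a$ and the second half to $b$. The failure of $\le$-preservation at the midpoint of the fence ensures that some morphism $\rho_n \colon \BT \to \AT$ (extracted from an embedding of $\BT$ into a power of $\AT$, using $\BT \in \IScP{\AT}$) makes $\phi_n := \rho_n \circ \psi_n$ fail to be a morphism from $\X_n$ to $\AT$.

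The first subtask is to verify $\X_n \in \IScP{\BT}$ by exhibiting enough morphisms $\X_n \to \BT$ to separate both $\le_n$ and $\trianglelefteq_n$, paralleling Claim~1 in the proof of Lemma~\ref{lem:gen1}. Natural candidates collapse $\X_n$ onto one of the two chains $\{0,a\}$ or $\{1,b\}$ of $\BT$; the asymmetry between these chains under $\trianglelefteq$ (the first is one equivalence class, the second is two singleton classes) provides enough resolution to distinguish elements and to separate the quasi-order.

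The crux, and the main obstacle, is to bound morphisms $\omega \colon \X_k \to \X_\ell$ with $k < \ell$. Preservation of $\trianglelefteq$ together with $\le$ rules out the case where $\omega(0)$ and $\omega(2k)$ land at opposite ends of $L_\ell$: that would force the fence endpoints $\omega(1)$ and $\omega(2k-1)$ to be at fence-distance $2\ell-2$ in $U_\ell$, contradicting the fact that an order-preserving map between fences is distance-contracting. Hence either $\omega$ collapses $L_k$ to a single element of $L_\ell$, pinning $\omega(U_k)$ inside a fence-ball of radius $2k-2$ around the corresponding endpoint of $U_\ell$, or $\omega(L_k) \subseteq U_\ell$ and a separate case analysis (using that $\omega(U_k)$ must then also lie in $U_\ell$ by $\trianglelefteq$-preservation) applies.

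Once this containment is in hand, $\psi_\ell \circ \omega$ takes values in a single chain $\{0,a\}$ or $\{1,b\}$ of $\BT$ and preserves both $\le$ and $\trianglelefteq$, so $\phi_\ell \circ \omega = \rho_\ell \circ \psi_\ell \circ \omega$ is a morphism $\X_k \to \AT$, and Lemma~\ref{lem:inf} yields that $\A$ admits infinitely many relations. The delicate point is to position the ``bad pair'' of $\psi_\ell$ and the quasi-order $\trianglelefteq_n$ so that the containment argument goes through uniformly for all $k < \ell$, and in particular to prevent $\omega$ from folding the fence of $\X_k$ across the $a$-to-$b$ boundary of $\X_\ell$ when $k$ is close to~$\ell$.
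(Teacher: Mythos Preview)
Your overall framework is right---build fence-like structures $\X_n$, pick non-morphisms $\psi_n \colon X_n \to B$, and use a diameter argument to control morphisms $\omega \colon \X_k \to \X_\ell$---but your choice of $\psi_n$ does not work, and the paper's choice is both simpler and essential.

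The problem is exactly the case you flag as ``delicate'' without resolving. Take $k = 2$, $\ell = 3$, and define $\omega \colon \X_2 \to \X_3$ by $\omega(0)=2$, $\omega(1)=3$, $\omega(2)=2$, $\omega(3)=3$, $\omega(4)=4$. This is a genuine morphism: it preserves $\le$, and since $\omega(X_2) \subseteq U_3$ it preserves $\trianglelefteq$ trivially. But its image straddles your $a$/$b$ boundary, so $\psi_3 \circ \omega$ sends the comparable pair $0 < 1$ in $\X_2$ to the incomparable pair $(a,b)$ in $\BT$. Since your $\rho_3$ was \emph{chosen} to witness $(\rho_3(a),\rho_3(b)) \notin r$, the map $\phi_3 \circ \omega$ is not a morphism, and Lemma~\ref{lem:inf} fails. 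More generally, whenever $\omega(L_k) \subseteq U_\ell$ the image can sit anywhere inside the fence $U_\ell$, so no placement of a midpoint boundary is safe.

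The paper avoids this entirely by taking $\psi_n$ to be almost constant: $\psi_n(2n) = 1$ and $\psi_n(x) = b$ for every other~$x$. The failure of $\psi_n$ is then a $\trianglelefteq$-failure at the \emph{extremal} pair $(0,2n)$, not a $\le$-failure at the midpoint. Now the diameter argument alone suffices for Claim~2: since $d_\ell(0,2\ell) = 2\ell > 2k = \mathrm{diam}(\X_k)$, any morphism $\omega \colon \X_k \to \X_\ell$ misses $0$ or $2\ell$, and $\psi_\ell$ restricted to either $X_\ell \setminus \{0\}$ or $X_\ell \setminus \{2\ell\}$ is easily seen to preserve both relations. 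No case analysis on $\omega(L_k)$ is needed.
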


\begin{proof}
Assume that\/ $\IScP \AT$ contains the structure $\BT$.
We use Lemma~\ref{lem:inf} again. For $n \ge 1$, let $\X_n = \langle \{0,1,\dots, 2n\}; \leq, \trianglelefteq\rangle$ be
as in Figure~\ref{fig:gen2}: the ordered set $\langle X_n; \leq \rangle$ is the $(2n+1)$-element fence, and the binary relation~$\trianglelefteq$ is the quasi-order on $X_n$ given by
\[
{\trianglelefteq} = (L_n)^2 \cup (U_n)^2 \cup (L_n \times U_n),
\]
where $L_n := \{0,2n\}$ and $U_n := \{1, 2, \dotsc, 2n-1\}$. Define $\psi_n \colon X_n \to B$ by
\begin{equation*}
 \psi_n(x) =
 \begin{cases}
 1, &\text{if $x=2n$,}\\
 b, &\text{otherwise.}
 \end{cases}
\end{equation*}
Then $\psi_n$ is not a morphism from $\X_n$ to~$\BT$, as $0 \trianglelefteq 2n$ but $\psi_n(0) = b \not\trianglelefteq 1 = \psi_n(2n)$. Since $\BT \in \IScP \AT$, there must be a morphism $\rho_n \colon \BT \to \AT$ such that $\phi_n := \rho_n \circ \psi_n$ is not a morphism from $\X_n$ to~$\AT$.

Using Lemma~\ref{lem:inf}, we can show that $\A$ admits infinitely many relations by establishing the following two claims.

\Claim{Claim 1} {$\X_n \in \IScP\AT$, for all $n \ge 1$.}%
Since $\BT \in \IScP \AT$, it is enough to show that $\X_n \in \IScP \BT$. Let $x, y \in X_n$ with $x \nleq y$. Since $\{0,a\}^2 \subseteq {\trianglelefteq}^{\BT}$, we can define the morphism $\alpha_x \colon \X_n \to \BT$ by
\begin{equation*} 
 \alpha_x(z) =
 \begin{cases}
 a, &\text{if $x \leq z$,}\\
 0, &\text{otherwise,}
 \end{cases}
\end{equation*}
and we have $\alpha_x(x) = a \nleq 0 = \alpha_x(y)$.

To separate the relation~$\trianglelefteq$, we define the morphism $\beta \colon \X_n \to \BT$ by
\begin{equation*} 
 \beta(z) =
 \begin{cases}
 b, &\text{if $z \in U_n$,}\\
 1, &\text{if $z \in L_n$.}
 \end{cases}
\end{equation*}
For $x,y \in X_n$ with $x \not\trianglelefteq y$, we have $x \in U_n$ and $y \in L_n$; so $\beta(x) = b \not\trianglelefteq 1 = \beta(y)$. We have now shown that $\X_n \in \IScP\BT$.

\Claim{Claim 2} {Let\/ $\omega \colon \X_k \to \X_\ell$, where $k < \ell$. Then $\phi_\ell \circ \omega$ is a morphism from $\X_k$ to~$\AT$.}%
The distance between the elements $0$ and $2\ell$ in the ordered-set reduct of $\X_\ell$ is~$2\ell$. Since the ordered-set reduct of $\X_k$ has diameter $2k < 2\ell$, it follows that $\{0,2\ell\} \nsubseteq \omega(X_k)$. So $\omega(X_k) \subseteq X_\ell \comp \{0\}$ or $\omega(X_k) \subseteq X_\ell \comp \{2\ell\}$. From the definition of~$\psi_\ell$, it is easy to see that both maps $\psi_\ell \rest {X_\ell \comp \{0\}}$ and $\psi_\ell \rest {X_\ell \comp \{2\ell\}}$ preserve $\leq$ and~$\trianglelefteq$. Hence $\psi_\ell \circ \omega$ is a morphism from $\X_k$ to~$\BT$, and thus $\phi_\ell \circ \omega = \rho_\ell \circ \psi_\ell \circ \omega$ is a morphism from $\X_k$ to~$\AT$.
\end{proof}

\begin{lemma}\label{lem:a56}
The Ockham algebras with dual spaces $\Y_5$ and $\Y_6$ admit infinitely many relations.
\end{lemma}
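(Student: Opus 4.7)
The plan is to apply Lemma~\ref{lem:gen2} to each algebra after exhibiting an alter ego into which the obstacle structure $\BT$ of Lemma~\ref{lem:gen2} embeds, in parallel with the proofs of Lemmas~\ref{lem:K} and~\ref{lem:a2}. A convenient observation is that $K(\Y_5)$ and $K(\Y_6)$ share the same four-chain lattice reduct, whose elements I shall label $\mathbf{0} < p < q < \mathbf{1}$. The Ockham operation $f$ agrees in both algebras on $\mathbf{0}, p, \mathbf{1}$ (with $f(\mathbf{0}) = f(p) = \mathbf{1}$ and $f(\mathbf{1}) = \mathbf{0}$) and differs only at $q$, where $f_5(q) = \mathbf{0}$ and $f_6(q) = p$. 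I therefore aim to use a single pair of binary relations for both algebras.

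The proposed relations on the common carrier are
\[
r := \Delta \cup \{(p, \mathbf{0}), (q, \mathbf{1})\} \quad\text{and}\quad s := r \cup \{(\mathbf{0}, p)\},
\]
where $\Delta$ denotes the diagonal. First I would verify that $r$ and $s$ are compatible with each of $K(\Y_5)$ and $K(\Y_6)$; closure under $\vee$ and $\wedge$ is routine from the chain structure of the reduct, and the crucial $f$-closure at $(q, \mathbf{1})$ succeeds in both cases because its $f$-image is $(\mathbf{0}, \mathbf{0}) \in \Delta$ in $K(\Y_5)$ and $(p, \mathbf{0}) \in r$ in $K(\Y_6)$, while the other non-diagonal pairs of $s$ all map under $f$ to $(\mathbf{1}, \mathbf{1}) \in \Delta$.

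Setting $\AT_i := \langle A_i; r, s\rangle$ for $i \in \{5, 6\}$, I would next observe that the bijection
\[
0 \mapsto p, \quad a \mapsto \mathbf{0}, \quad 1 \mapsto q, \quad b \mapsto \mathbf{1}
\]
is actually an isomorphism of relational structures $\BT \cong \AT_i$: the two non-diagonal pairs of $\leq$ in $\BT$ map exactly to the two non-diagonal pairs of $r$, and the extra pair $(a, 0)$ of $\trianglelefteq$ maps exactly to the extra pair $(\mathbf{0}, p)$ of $s$, with all remaining pairs of $B^2$ and of $A_i^2$ excluded from both sides. Hence $\BT \in \IScP{\AT_i}$ for $i = 5, 6$, and Lemma~\ref{lem:gen2} immediately delivers the conclusion for both algebras.

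The main obstacle is the hunt for the relations themselves: any compatible binary relation on $K(\Y_5)$ that contains $\Delta$ together with both of the asymmetric pairs $(p, \mathbf{0})$ and $(q, \mathbf{1})$ threatens under $\vee$ and $\wedge$ closure to explode to a much larger relation, and the $f$-image of $(q, \mathbf{1})$ changes between the two algebras; the fortunate point is that the two options $(\mathbf{0}, \mathbf{0})$ and $(p, \mathbf{0})$ are already present in $r$, so the closure stays small in both algebras simultaneously.
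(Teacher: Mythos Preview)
Your proposal is correct and follows essentially the same approach as the paper. The paper labels the common four-chain $0<a<b<1$ and uses the pair ${\leq}=\Delta\cup\{(0,a),(1,b)\}$, ${\trianglelefteq}={\leq}\cup\{(a,0)\}$, which is just the converse of your $(r,s)$ under the relabelling $\mathbf{0},p,q,\mathbf{1}\leftrightarrow 0,a,b,1$; in both cases the alter ego is itself isomorphic to~$\BT$, so Lemma~\ref{lem:gen2} applies immediately.
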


\begin{proof}
The Ockham algebras $\A_5 := K(\Y_5)$ and $\A_6 := K(\Y_6)$ are shown in Figure~\ref{fig:dualY56}. We take both algebras to have the same underlying set $A = \{0, a, b, 1\}$. Define the structure $\AT = \langle A; \leq, \trianglelefteq \rangle$ as shown in Figure~\ref{fig:dualY56}, with the order $\leq$ and quasi-order $\trianglelefteq$ given by
\[
{\leq} = \Delta_A \cup \{ (0,a), (1,b) \}
\quad\text{and}\quad
{\trianglelefteq} = {\leq} \cup \{(a,0)\}.
\]
Then it is easy to check that $\AT$ is an alter ego of both $\A_5$ and $\A_6$. So it follows immediately from Lemma~\ref{lem:gen2} that $\A_5$ and $\A_6$ admit infinitely many relations.
\end{proof}

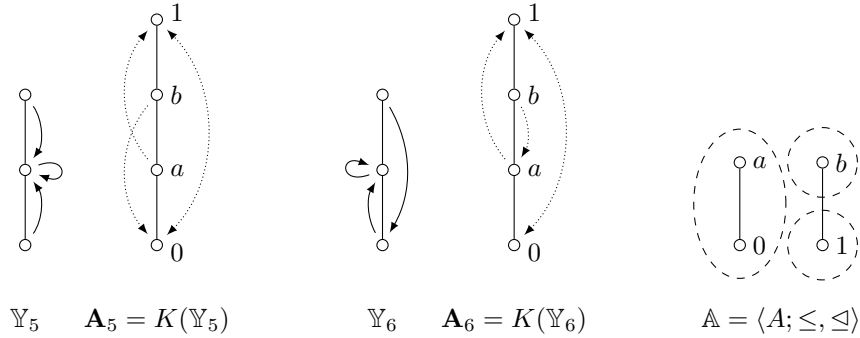
\begin{figure}[t]
\begin{tikzpicture}
  \begin{scope}
   \node at (0,-1) {$\Y_5$};
     \node[unshaded] (0) at (0,0) {};
     \node[unshaded] (1) at (0,1) {};
     \node[unshaded] (2) at (0,2) {};
        \draw[order] (0) to (1);
        \draw[order] (1) to (2);
        \draw[loopy] (1) to [out=20,in=-20] (1);
        \draw[curvy, bend angle=35] (0) to [bend right] (1);
        \draw[curvy, bend angle=35] (2) to [bend left] (1);
          \end{scope}
   \begin{scope}[xshift=1.75cm] 
   \node at (0,-1) {$\A_5 = K(\Y_5)$};
     \node[unshaded] (0) at (0,0) {};
         \node[label, anchor=west] at ($(0)+(0,-0.1)$) {$0$};
      \node[unshaded] (a) at (0,1) {};
         \node[label, anchor=west] at (a) {$a$};
      \node[unshaded] (b) at (0,2) {};
         \node[label, anchor=west] at (b) {$b$};
      \node[unshaded] (1) at (0,3) {};
         \node[label, anchor=west] at ($(1)+(0,0.1)$) {$1$};
      \draw[order] (0) to (a);
      \draw[order] (a) to (b);
      \draw[order] (b) to (1);
      \draw[curvy, <->, densely dotted, bend angle=45] (0) to [bend right] (1);
      \draw[curvy, bend angle=40, densely dotted] (a) to [bend left] (1);
      \draw[curvy, bend angle=40, densely dotted] (b) to [bend right] (0);
         \end{scope}
  \begin{scope}[xshift=4.75cm] 
   \node at (0,-1) {$\Y_6$};
     \node[unshaded] (0) at (0,0) {};
     \node[unshaded] (1) at (0,1) {};
     \node[unshaded] (2) at (0,2) {};
        \draw[order] (0) to (1);
        \draw[order] (1) to (2);
        \draw[loopy] (1) to [out=-160,in=-200] (1);
        \draw[curvy] (0) to [bend left] (1);
        \draw[curvy] (2) to [bend left] (0);
          \end{scope}
   \begin{scope}[xshift=6.5cm] 
   \node at (0,-1) {$\A_6 = K(\Y_6)$};
     \node[unshaded] (0) at (0,0) {};
         \node[label, anchor=west] at ($(0)+(0,-0.1)$) {$0$};
      \node[unshaded] (a) at (0,1) {};
         \node[label, anchor=west] at (a) {$a$};
      \node[unshaded] (b) at (0,2) {};
         \node[label, anchor=west] at (b) {$b$};
      \node[unshaded] (1) at (0,3) {};
         \node[label, anchor=west] at ($(1)+(0,0.1)$) {$1$};
      \draw[order] (0) to (a);
      \draw[order] (a) to (b);
      \draw[order] (b) to (1);
      \draw[curvy, <->, densely dotted, bend angle=45] (0) to [bend right] (1);
      \draw[curvy, bend angle=40, densely dotted] (a) to [bend left] (1);
      \draw[curvy, densely dotted] (b) to [bend left] (a);
         \end{scope}
   \begin{scope}[xshift=9.5cm] 
      \node at (0.55,-1) {$\AT = \langle A; \leq, \trianglelefteq\rangle$};
       \node[unshaded] (0) at (0,0) {};
         \node[anchor=west] at ($(0) + (0.05,0)$) {$0$};
      \node[unshaded] (a) at (0,1.1) {};
         \node[anchor=west] at ($(a) + (0.05,0)$){$a$};
     \node[unshaded] (1) at (1.1,0) {};
         \node[anchor=west] at ($(1) + (0.05,0)$) {$1$};
     \node[unshaded] (b) at (1.1,1.1) {};
         \node[anchor=west] at ($(b) + (0.05,0)$) {$b$};
      \draw[order] (a) to (0);
      \draw[order] (1) to (b);
  \begin{pgfonlayer}{background}
    \node[ellipse,draw,dashed,fit=(0)(a),inner xsep=10pt,inner ysep=2pt] {};
    \node[circle,draw,dashed,fit=(b),inner sep=7pt] {};
    \node[circle,draw,dashed,fit=(1),inner sep=7pt] {};
  \end{pgfonlayer}
 \end{scope}
\end{tikzpicture}
\caption{The Ockham algebras with dual spaces $\Y_5$ and $\Y_6$}\label{fig:dualY56}
\end{figure}

For finite Ockham algebras $\A$ and $\B$ such that $H(\A)$ is a divisor of $H(\B)$, if $\A$ admits infinitely many relations, then $\B$ does too, by Lemmas~\ref{lem:transfer} and~\ref{lem:divisor}. So the implication $(1) \Rightarrow (3)$ of the Main Theorem~\ref{thm:main} now follows from Lemmas~\ref{lem:DP}, \ref{lem:K}, \ref{lem:a2} and~\ref{lem:a56}.

\section{The characterisation for Ockham algebras}\label{sec:char}

In this section, we complete the proof of the Main Theorem~\ref{thm:main} by showing that $(3) \Rightarrow (2)$.

\begin{lemma}
Let\/ $\X$ be a non-empty finite Ockham space. Assume that\/ $\X$ is not isomorphic to any of the Ockham spaces in Figure~\ref{fig:finiteOckham}. Then one of the Ockham spaces in Figure~\ref{fig:infiniteOckham} is a divisor of\/~$\X$.
\end{lemma}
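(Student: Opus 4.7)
Given that $\X$ is not isomorphic to any $\CT_m$, $\DT_m$, or $\DT_m^\partial$, I aim to exhibit one of the eight obstacles as a divisor of $\X$, by identifying the structural feature of $\X$ that forces such an obstacle.

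First I analyze cycles. If $g$ has an even cycle, Lemma~\ref{lem:evencycle} yields $\Y_3 \in \HS{\X}$; otherwise every cycle of $g$ is odd. Let $X^\infty := \bigcap_{n \ge 0} g^n(X)$, which is a substructure closed under $g$ and consists exactly of the union of all cycles, and which is an antichain by Lemma~\ref{lem:oddcycle}. If $X^\infty$ decomposes as $C_1 \dotcup C_2 \dotcup \cdots$ with at least two cycles, then the morphism sending each $C_i$ to a distinct fixpoint of a two-fixpoint antichain surjects $C_1 \dotcup C_2$ onto $\Y_1$, so $\Y_1 \in \HS{\X}$. Hence assume $X^\infty = C$ is a single odd cycle of length~$m$.

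Next I handle the non-cycle elements $X \setminus C$. Because $g$ is order-reversing and $C$ is an antichain, an elementary argument shows that for any $x \in X \setminus C$ with $g(x) = c \in C$, the only cycle element that $x$ can be $\le$-comparable to is $g^{-1}(c)$. I then exhibit obstacles in three scenarios that rule out $\X$ being a good form. If some $x \in X \setminus C$ has $g(x) \notin C$ (depth $\ge 2$), case analysis on the possible orderings of $x$, $g(x)$, and $g^2(x) \in C$ subject to the order-reversing constraint produces a substructure whose appropriate quotient is one of $\Y_5$, $\Y_6$, $\Y_6^\partial$, or $\Y_2$. If some depth-$1$ element $x$ is incomparable to its unique candidate $g$-predecessor in $C$ (hence to all of $C$), then the substructure $\{x\} \cup C$ admits the surjection $x \mapsto 0$, $C \mapsto 1$ onto $\Y_2$, using that the image of $C$ must consist of fixpoints of $\Y_2$. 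If $|X \setminus C| \ge 2$, the substructure on two distinct non-cycle elements together with $C$ maps onto one of $\Y_4$, $\Y_4^\partial$, or $\Y_2$ depending on the relative orderings and orientations of the two non-cycle elements with respect to the cycle.

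Ruling out each of these three configurations forces $X = C$ (so $\X \cong \CT_m$) or $X \setminus C = \{x\}$ for a single depth-$1$ element $x$ comparable strictly below or strictly above its $g$-predecessor in $C$ (so $\X \cong \DT_m$ or $\X \cong \DT_m^\partial$), contradicting the hypothesis on $\X$; hence at least one scenario occurs and delivers the required obstacle. The main obstacle will be the detailed case analysis for depth-$\ge 2$ elements and for multiple non-cycle elements: in each admissible configuration of orderings, one must tailor a substructure $S \subseteq X$ closed under $g$ together with a congruence $\theta$ so that $S/\theta$ collapses to one of the named chain obstacles. The delicate point is ensuring antisymmetry of the pushforward order on $S/\theta$, since $g$-closure of the generated congruence can compel further unintended identifications, so each sub-case requires a specifically chosen quotient.
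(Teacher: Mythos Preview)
Your overall plan mirrors the paper's proof closely: even cycle $\Rightarrow \Y_3$ via Lemma~\ref{lem:evencycle}; two odd cycles $\Rightarrow \Y_1$ by collapsing each cycle to a fixpoint; then a case analysis on the tail structure outside the unique odd cycle~$C$. The paper organises the last step by whether $\X$ is one-generated, while you organise it by (i) existence of a depth-$\ge 2$ element, (ii) an incomparable depth-$1$ element, (iii) at least two non-cycle elements. These decompositions are essentially equivalent and your concluding argument (ruling out (i)--(iii) forces $\CT_m$, $\DT_m$ or $\DT_m^\partial$) is correct.

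There is, however, a genuine gap in your case~(iii). After (i) and (ii) are excluded, every non-cycle element is depth~$1$ and strictly comparable to its unique candidate predecessor in~$C$. Take two such elements $x,y$. If both lie below~$C$ you correctly get $\Y_4$; if both lie above~$C$ you get~$\Y_4^\partial$. But in the mixed case, say $x < c_x$ and $y > c_y$, none of $\Y_4$, $\Y_4^\partial$, $\Y_2$ arises: you cannot map $C\cup\{x,y\}$ onto $\Y_4$ or $\Y_4^\partial$ (the image of~$C$ must be a fixpoint, and then one of $x,y$ is forced to coincide with it, killing surjectivity), and $\Y_2$ is an antichain so the strict comparability $x<c_x$ obstructs any morphism sending $x$ and $C$ to distinct points. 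The correct target here is~$\Y_5$: send $x\mapsto 0$, $C\mapsto 1$, $y\mapsto 2$. One checks $y\nleq x$ (else $g(x)\le g(y)$ in the antichain~$C$ forces $c_x=c_y$, contradicting $x<c_x<y$), so this map is order-preserving, and it clearly preserves~$g$. This is exactly how the paper handles the mixed sub-case in its Case~3b. So your list for~(iii) should read $\Y_4$, $\Y_4^\partial$, $\Y_5$ rather than $\Y_4$, $\Y_4^\partial$, $\Y_2$; with that correction your plan goes through.

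A minor remark on~(i): once you assume $g^2(x)\in C$ and set $y=g(x)$, the relevant obstacles are $\Y_6$ (if $y$ lies below~$C$), $\Y_6^\partial$ (if above), or $\Y_2$ (if incomparable, via your case~(ii)); $\Y_5$ does not actually occur here, since any quotient sending $x\mapsto 2$ must send $g(x)=y\notin C$ to $g(2)$, which is $0$ in $\Y_6$ but $1$ in~$\Y_5$.
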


\begin{proof}
Since the Ockham space $\X = \langle X; g, \le, \T \rangle$ is finite, it follows that $\X$ must contain an $n$-cycle, for some $n \ge 1$. We break the proof up into three cases.

\Case{Case 1} {$\X$ contains an even cycle.}%
The Ockham space $\Y_3$ is a divisor of $\X$, by Lemma~\ref{lem:evencycle}.

\Case{Case 2} {$\X$ contains two different odd cycles.}%
Let $C$ and $D$ be disjoint odd cycles of~$\X$. Define $\Z \le \X$ by $Z := C \cup D$. Then $\Z$ is an antichain, by Lemma~\ref{lem:oddcycle}. So we can define the morphism $\alpha \colon \Z \to \Y_1$ by
\begin{equation*}
 \alpha(z) =
 \begin{cases}
 0, &\text{if $z \in C$,}\\
 1, &\text{if $z \in D$.}
 \end{cases}
\end{equation*}
Thus $\Y_1 \in \HS \X$.

\Case{Case 3} {$\X$ contains only one cycle.}%
Let $C$ be the unique cycle of $\X$. By Case~1, we can assume that $m := \abs C$ is odd. So $C$ is an antichain in $\X$, by Lemma~\ref{lem:oddcycle}. Since $\X$ is not isomorphic to $\CT_m$ from Figure~\ref{fig:finiteOckham}, we must have $X \comp C \ne \emptyset$. We consider two subcases.

\Case{Case 3a} {$\X$ is one-generated.}%
There is $x \in X \comp C$ with $g(x) \in C$. If $C \cup \{x\}$ is an antichain, then it is easy to see that $\Y_2 \in \HS \X$. So we can assume without loss of generality that $x < c$, for some $c \in C$. Since $g$ is order-reversing, we get $g(c) \le g(x)$. But $g(x) \in C$ and so $g(c) = g(x)$, as $C$ is an antichain. The substructure of $\X$ on $C \cup \{x\}$ is isomorphic to~$\DT_m$ from Figure~\ref{fig:finiteOckham}. Therefore $X \comp (C \cup \{x\}) \ne \emptyset$. Since $\X$ is one-generated, there is $y \in X \comp (C \cup \{x\})$ with $g(y) = x$.

Since $g$ is order-reversing and $x \notin {\uparrow} C$, it follows that $y \notin {\downarrow} (C \cup \{x\})$. Now define $\Z \le \X$ by $Z := C \cup \{x,y\}$. We can define the morphism $\beta \colon \Z \to \Y_6$ by
\begin{equation*}
 \beta(z) =
 \begin{cases}
 2, &\text{if $z = y$,}\\
 1, &\text{if $z \in C$,}\\
 0, &\text{if $z = x$,}
 \end{cases}
\end{equation*}
and therefore $\Y_6 \in \HS \X$.

\Case{Case 3b} {$\X$ is not one-generated.}%
First, assume that there exists $z\in X$ with $g(z)\notin C$. Then the substructure $\mathbb Z$ generated by $z$ is 
not isomorphic to any of the Ockham spaces in Figure~\ref{fig:finiteOckham}, and so is covered by Case 3a. Thus we can assume that that there are distinct $x, y \in X \comp C$ such that $g(x), g(y) \in C$. Define $\Z \le \X$ by $Z := C \cup \{x,y\}$. If $x \notin {\uparrow} C$ and $y \notin {\uparrow} C$, then without loss of generality $y \nle x$ and we can define $\gamma \colon \Z \to \Y_4$ by
\begin{equation*}
 \gamma(z) =
 \begin{cases}
 2, &\text{if $z \in C$,}\\
 1, &\text{if $z = y$,}\\
 0, &\text{if $z = x$,}
 \end{cases}
\end{equation*}
and so $\Y_4 \in \HS \X$. Similarly, if $x \notin {\downarrow} C$ and $y \notin {\downarrow} C$, then we can show that $\Y_4^\partial \in \HS \X$.

Without loss of generality, we can now assume that $x \notin {\uparrow} C$ and $y \notin {\downarrow} C$. In this case, it is easy to check that $\Y_5 \in \HS \X$.
\end{proof}

This completes the proof of our main theorem.


\end{document}